\newtheorem{thm}{Theorem}[section]
\newtheorem{lem}[thm]{Lemma}
\newtheorem{claim}[thm]{Claim}
\newtheorem{prop}[thm]{Proposition}
\theoremstyle{definition}
\theoremstyle{remark}
\newtheorem{rem}[thm]{Remark}
\numberwithin{equation}{section}
\begin{document}

\title[On an oscillatory integral involving a homogeneous form]{On an oscillatory integral \\ involving a homogeneous form}

\author{Shuntaro Yamagishi}
\address{School of Mathematics, University of Bristol, Bristol, BS8 1TW, United Kingdom}
\email{sy17629@bristol.ac.uk}
\indent

\date{Revised on \today}

\begin{abstract}
Let $F \in \mathbb{R}[x_1, \ldots, x_n]$ be a homogeneous form of degree $d > 1$ satisfying $(n - \dim V_{F}^*) > 4$,
where $V_F^*$ is the singular locus of $V(F) = \{ \mathbf{z} \in {\mathbb{C}}^n: F(\mathbf{z}) = 0 \}$.
Suppose there exists $\mathbf{x}_0 \in (0,1)^n \cap (V(F) \backslash V_F^*)$.
Let $\mathbf{t} = (t_1, \ldots, t_n) \in \mathbb{R}^n$. Then for a smooth function $\varpi:\mathbb{R}^n \rightarrow \mathbb{R}$ with its support contained in a small neighbourhood of $\mathbf{x}_0$,
we prove
$$
\Big{|} \int_{0}^{\infty} \cdots \int_{0}^{\infty}
\varpi(\mathbf{x}) x_1^{i t_1} \cdots x_n^{i t_n} e^{2 \pi i \tau F(\mathbf{x})} d \mathbf{x}  \Big{|} \ll \min \{ 1, |\tau|^{-1} \},
$$
where the implicit constant is independent of $\tau$ and $\mathbf{t}$.
\end{abstract}

\subjclass[2010]{42B20}
\keywords{oscillatory integrals}

\maketitle

\section{Introduction}
\label{secintro}
Let $F \in \mathbb{R}[x_1, \ldots, x_n]$ be a homogeneous form of degree $d$ and
let $V(F; \mathbb{R}) = \{ \mathbf{z} \in \mathbb{R}^n : F(\mathbf{z}) = 0 \}$.
We let $V^*_{F}$ denote the singular locus of $V(F) = \{ \mathbf{z} \in {\mathbb{C}}^n: F(\mathbf{z}) = 0 \}$ which is an affine variety (not necessarily irreducible) in $\mathbb{A}^n_{\mathbb{C}}$ defined by
\begin{equation}
\label{sing loc}
V_{F}^* = \left\{ \mathbf{z} \in \mathbb{C}^n:  \nabla F(\mathbf{z}) = \mathbf{0} \right\},
\end{equation}
where
$\nabla F = \left( \frac{\partial F}{\partial x_1}, \ldots,  \frac{\partial F}{\partial x_n} \right)$.
The main purpose of this paper is to prove the following.
\begin{thm}
\label{mainthm}
Let $F \in \mathbb{R}[x_1, \ldots, x_n]$ be a homogeneous form of degree $d > 1$ satisfying $(n - \dim V_{F}^*) > 4$.
Let
$\mathbf{r} = (r_1, \ldots, r_n) \in [\theta_1, \theta_1'] \times \cdots \times [\theta_n, \theta_n']$, where $\theta_j \leq \theta'_j$ $(1 \leq j \leq n)$,
and  $\mathbf{t} = (t_1, \ldots, t_n) \in \mathbb{R}^n$.
Suppose $\mathbf{x}_0 \in (0,1)^n \cap V(F; \mathbb{R})$ is non-singular.
Let $\varpi: \mathbb{R}^n \rightarrow \mathbb{R}$ be a smooth function whose support is contained in $(\mathbf{x}_0 + [- \delta_0, \delta_0]^n)$ for $\delta_0 > 0$.
Then provided $\delta_0$ is sufficiently small, we have
\begin{eqnarray}
\label{mainthmint}
\Big{|} \int_{0}^{\infty} \cdots \int_{0}^{\infty}
\varpi(\mathbf{x}) x_1^{r_1 + i t_1} \cdots x_n^{r_n + i t_n} e^{2 \pi i \tau F(\mathbf{x})} d \mathbf{x}  \Big{|} \ll \min \{ 1, |\tau|^{-1} \},
\end{eqnarray}
where the implicit constant is independent of $\tau$, $\mathbf{r}$ and $\mathbf{t}$.
\end{thm}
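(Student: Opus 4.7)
The trivial bound $|I|\ll 1$ is immediate: $|\mathbf{x}^{\mathbf{r}+i\mathbf{t}}|=\prod_j x_j^{r_j}$ is bounded uniformly on $\operatorname{supp}\varpi\subset(0,1)^n$ for $\mathbf{r}$ in the bounded box, and $\varpi$ is compactly supported. So I may assume $|\tau|\geq 1$ and prove $|I|\ll|\tau|^{-1}$.

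The plan is to substitute $u_j=\log x_j$ in order to move the Mellin factors into the phase. With $\widetilde F(\mathbf{u}) = F(e^{u_1},\dots,e^{u_n})$, $\widetilde\varpi(\mathbf{u}) = \varpi(e^{\mathbf{u}})\prod_j e^{u_j}$, and $\mathbf{u}_0 = \log\mathbf{x}_0$, one rewrites
\[
I = \int \widetilde\varpi(\mathbf{u})\,e^{\mathbf{r}\cdot\mathbf{u}}\,e^{i\Phi(\mathbf{u})}\,d\mathbf{u},\qquad \Phi(\mathbf{u}) = 2\pi\tau\widetilde F(\mathbf{u}) + \mathbf{t}\cdot\mathbf{u}.
\]
After this change, $\mathbf{t}$ appears only linearly in the phase, so $\nabla^{k}\Phi$ for $k\geq 2$ is proportional to $\tau$ and independent of $\mathbf{t}$; this is the key gain over integrating by parts directly in the $\mathbf{x}$-variables, where derivatives of $x_j^{it_j}$ would produce unwanted factors of $|t_j|$. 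Non-singularity of $\mathbf{x}_0$ gives $\nabla\widetilde F(\mathbf{u}_0)\neq 0$, so by shrinking $\delta_0$ I ensure $0<c\leq|\nabla\widetilde F|\leq C$ on $\operatorname{supp}\widetilde\varpi$.

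The proof then splits according to the size of $\nabla\Phi = 2\pi\tau\nabla\widetilde F(\mathbf{u})+\mathbf{t}$. In the \emph{non-stationary regime}, where $|\nabla\Phi|\gtrsim\max(|\tau|,|\mathbf{t}|)$ on the support (equivalently, $\mathbf{t}$ is bounded away from the small set $-2\pi\tau\nabla\widetilde F(\operatorname{supp}\widetilde\varpi)$), a single integration by parts via the transpose of $Lf = \nabla\Phi\cdot\nabla f/(i|\nabla\Phi|^2)$ produces an integrand bounded by $|\nabla\Phi|^{-1}+|\nabla^{2}\Phi|\cdot|\nabla\Phi|^{-2}\lesssim|\tau|^{-1}$, using $|\nabla^{2}\Phi|\lesssim|\tau|$. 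In the \emph{stationary regime}, where some $\mathbf{u}^*\in\operatorname{supp}\widetilde\varpi$ satisfies $|\mathbf{t}+2\pi\tau\nabla\widetilde F(\mathbf{u}^*)|\ll|\tau|$ (forcing $|\mathbf{t}|\asymp|\tau|$), I use a smooth partition of unity to split off the region $\{|\nabla\Phi|\geq|\tau|^{1/2}\}$, treated as above, from a small ball of radius $\lesssim|\tau|^{-1/2}$ around $\mathbf{u}^*$. The latter is estimated by stationary phase: $\nabla^{2}\Phi = 2\pi\tau\nabla^{2}\widetilde F$, and provided $\nabla^{2}\widetilde F(\mathbf{u}^*)$ has rank at least $2$, the contribution is $\lesssim|\tau|^{-r/2}\leq|\tau|^{-1}$ with $r\geq 2$ the rank.

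The main obstacle is the stationary regime, and specifically obtaining a uniform lower bound on the rank of $\nabla^{2}\widetilde F$ at the relevant critical points. This is where the hypothesis $n-\dim V_F^*>4$ is expected to enter: it controls the locus on which $\nabla^{2} F$ degenerates and, combined with the non-vanishing of $\nabla F(\mathbf{x}_0)$ and the smallness of $\delta_0$, should yield the required uniform rank bound via a compactness argument on the image $\nabla\widetilde F(\operatorname{supp}\widetilde\varpi)$. Once this is in hand, assembling the two regimes gives $|I|\ll|\tau|^{-1}$ uniformly in $\mathbf{r}$ and $\mathbf{t}$, completing the proof.
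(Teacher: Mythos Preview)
Your high-level strategy coincides with the paper's: absorb $x_j^{it_j}=e^{it_j\log x_j}$ into the phase so that $\mathbf t$ enters only linearly, then treat a non-stationary regime by integration by parts and a stationary regime by a two-dimensional Morse/stationary-phase argument yielding $|\tau|^{-1}$. The proposal, however, has a genuine gap at exactly the point you yourself flag.

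You have not proved, and a ``compactness argument on the image $\nabla\widetilde F(\operatorname{supp}\widetilde\varpi)$'' does not prove, that $\nabla^2\widetilde F$ has rank $\ge 2$ on a neighbourhood of $\mathbf u_0$. The hypothesis $n-\dim V_F^*>4$ constrains the vanishing of $\nabla F$, not of second derivatives, so no direct compactness argument links the two. In the paper this step is the content of Proposition~\ref{prop8.3}: for some pair $i<j$ the polynomial
\[
\mathfrak G_{i,j}=(x_iF_{ii}+F_i)(x_jF_{jj}+F_j)-x_ix_jF_{ij}^2,
\]
which equals $(x_ix_j)^{-1}$ times the $(i,j)$ principal $2\times 2$ minor of $\nabla^2\widetilde F$, is \emph{not divisible by} $F$. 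The proof uses that the codimension bound forces $F$ and $F|_{x_k=0}$ to be irreducible over $\mathbb C$ (Lemmas~\ref{irredlemma}, \ref{Lemma on the B rank}), and then a real-algebraic argument (Proposition~\ref{PropApp}, Lemma~\ref{A1}) is needed to find a non-singular real point, arbitrarily close to the given $\mathbf x_0$, where $\mathfrak G_{i,j}$ and the associated factors do not vanish. This is the algebraic heart of the whole theorem; without it one cannot exclude $\operatorname{rank}\nabla^2\widetilde F\le 1$ on the support, and then your stationary bound fails.

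Two further points deserve care. First, carrying out stationary phase in all $n$ variables with a Hessian of rank $2$ is awkward: the critical set of $\Phi$ is then generically an $(n-2)$-manifold, so localising to one ball of radius $|\tau|^{-1/2}$ around a single $\mathbf u^*$ does not capture it. The paper avoids this by freezing $x_3,\dots,x_n$ and doing honest non-degenerate two-variable stationary phase in $(x_1,x_2)$, which is why controlling a single $2\times 2$ minor suffices. Second, the implied constants in the Morse change of variables depend on the location of the critical point, which moves with $\mathbf t$ and $\tau$; establishing uniformity here is not automatic and is precisely what Sections~\ref{secfirstbox}--\ref{secthirdbox} (and the explicit inverse function theorem, Theorem~\ref{thm exp inv}) are devoted to. Your $\tau$-dependent partition of unity would likewise need care, since each derivative of the cutoff costs a factor $|\tau|^{1/2}$.
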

In the statement of the theorem, by $\mathbf{x}_0 \in V(F; \mathbb{R})$ is non-singular we mean $\mathbf{x}_0 \not \in V_F^*$, i.e.
there exists  $1 \leq j_0 \leq n$ such that $\partial F / \partial x_{j_0} (\mathbf{x}_0) \not = 0.$
Also we note the implicit constant in (\ref{mainthmint}) is independent of $\mathbf{r}$ but it will depend on $\theta_j$ and $\theta'_j$
$(1 \leq j \leq n)$. The key feature of the result is that the bound is uniform in $\mathbf{t}$; the result can be deduced easily
for a fixed $\mathbf{t} \in \mathbb{R}^n$ (for example, by \cite[Lemma 10]{HB}), but obtaining the uniformity is quite delicate and this is where the challenge lies. We make use of an explicit version of the inverse function theorem, the stationary phase method,
basic oscillatory integral estimates, some differential geometry and algebraic geometry over $\mathbb{R}$  to achieve this.

The oscillatory integral in (\ref{mainthmint}) is related to the singular integral which appears in an application of the Hardy-Littlewood circle method
involving a homogeneous form $F \in \mathbb{Z}[x_1, \ldots, x_n]$. In fact, Theorem \ref{mainthm} is one of the important ingredients in the
forthcoming paper by the author. Since similar integrals come up often in analytic number theory, for example see \cite[Lemma 4.9]{V},
Theorem \ref{mainthm} and other estimates in this paper may find further useful applications elsewhere. 

\textit{Acknowledgments.} The author would like to thank Matthew Beckett, Tim Browning, Vinay Kumaraswamy and Ian Petrow for many helpful discussions.
\section{Preliminaries}

Let $\| \cdot \|$ denote the $L^2$-norm on $\mathbb{R}^n$. Given $\mathcal{X} \subseteq \mathbb{R}^n$,
let $\overline{\mathcal{X}}$ denote the closure of $\mathcal{X}$,
$\textnormal{int}(\mathcal{X}) = \mathbb{R}^n \backslash \overline{(\mathbb{R}^n \backslash \mathcal{X})}$
the interior of $\mathcal{X}$ and $\partial \mathcal{X} = \overline{\mathcal{X}} \backslash \textnormal{int}(\mathcal{X})$ the boundary of
$\mathcal{X}$. We present a proof of the following explicit version of the inverse function theorem in Appendix \ref{AppB}.
Given a function $\mathfrak{F} = (\mathfrak{F}_1, \ldots, \mathfrak{F}_n): \mathbb{R}^n \rightarrow \mathbb{R}^n$
differentiable at $\mathbf{x}_0$, we denote by $\textnormal{Jac} \mathfrak{F} (\mathbf{x}_0)$
the $n \times n$ Jacobian matrix of $\mathfrak{F}$ at $\mathbf{x}_0$, i.e. $[\partial \mathfrak{F}_i / \partial x_j (\mathbf{x}_0) ]$.

\begin{thm}[Explicit inverse function theorem]
\label{thm exp inv}
Let $\mathfrak{F} = (\mathfrak{F}_1, \ldots, \mathfrak{F}_n): \mathbb{R}^n \rightarrow \mathbb{R}^n$ be smooth.
Let $\mathbf{x}_0 \in \mathbb{R}^n$ and suppose $A = \textnormal{Jac} \mathfrak{F} (\mathbf{x}_0)$ is invertible.
Let $a_{\max}$ be the maximum of the absolute values of the entries of $A$. Let $0 < M < |\det A|/ (n \cdot n! \cdot a_{\max}^{n-1})$.
Let $W \subseteq \mathbb{R}^n$ be a bounded convex open set such that

\textnormal{i)} $\mathbf{x}_0 \in W$,

\textnormal{ii)} $\textnormal{det}\left( \textnormal{Jac}\mathfrak{F}(\mathbf{x}) \right) \not = 0$ $(\mathbf{x} \in W)$, and

\textnormal{iii)}
$$
\Big{|} \frac{\partial \mathfrak{F}_i}{ \partial x_j} (\mathbf{x}) - \frac{\partial \mathfrak{F}_i}{ \partial x_j} (\mathbf{x}_0) \Big{|} < M \ \ \ (\mathbf{x} \in W, \ 1 \leq i, j \leq n).
$$
Let
$$
V = \{ \mathbf{y} \in \mathbb{R}^n : \| \mathbf{y} - \mathfrak{F}(\mathbf{x}_0) \| < m/2  \},
$$
where
$$
m = \min_{\mathbf{x} \in \partial W} \| \mathfrak{F}(\mathbf{x}) - \mathfrak{F}(\mathbf{x}_0) \|.
$$
Then $\mathfrak{F}^{-1}$ is well-defined and smooth on $V$, and
$\mathfrak{F}^{-1}(V) \subseteq W$ is diffeomorphic to $V$.
\end{thm}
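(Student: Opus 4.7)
The plan is to handle injectivity of $\mathfrak{F}$ on $W$ and surjectivity onto $V$ as two independent steps, and then invoke the classical inverse function theorem pointwise to promote the resulting set-theoretic inverse to a smooth one. First, for injectivity, I would take $\mathbf{x}_1, \mathbf{x}_2 \in W$ and use convexity of $W$ together with the fundamental theorem of calculus to write
$$
\mathfrak{F}(\mathbf{x}_2) - \mathfrak{F}(\mathbf{x}_1) = A(\mathbf{x}_2 - \mathbf{x}_1) + R\,(\mathbf{x}_2 - \mathbf{x}_1), \qquad R := \int_0^1 \big( \textnormal{Jac}\mathfrak{F}(\mathbf{x}_1 + t(\mathbf{x}_2 - \mathbf{x}_1)) - A \big) dt.
$$
Assuming the left-hand side vanishes, this rearranges to $\mathbf{x}_2 - \mathbf{x}_1 = -A^{-1} R\,(\mathbf{x}_2 - \mathbf{x}_1)$, so it suffices to show $\|A^{-1} R\| < 1$ in operator norm. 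The entries of $A^{-1}$ are controlled by Cramer's rule (each cofactor being a sum of $(n-1)!$ signed products bounded by $a_{\max}^{n-1}$) and the entries of $R$ by hypothesis (iii); a careful matrix-norm bound should collapse these into the explicit estimate $\|A^{-1} R\| \leq n \cdot n! \cdot a_{\max}^{n-1} M / |\det A| < 1$, matching the hypothesis on $M$.

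Next, for surjectivity, I would fix $\mathbf{y} \in V$ and minimize the continuous function $\mathbf{x} \mapsto \|\mathfrak{F}(\mathbf{x}) - \mathbf{y}\|^2$ on the compact set $\overline{W}$. The definitions of $V$ and $m$, together with the reverse triangle inequality, force this function to be at least $(m/2)^2$ on $\partial W$, while its value at $\mathbf{x}_0$ is strictly less; so any minimizer $\mathbf{x}^*$ is an interior point of $W$. The first-order condition $\textnormal{Jac}\mathfrak{F}(\mathbf{x}^*)^T (\mathfrak{F}(\mathbf{x}^*) - \mathbf{y}) = \mathbf{0}$, combined with the invertibility hypothesis (ii), then yields $\mathfrak{F}(\mathbf{x}^*) = \mathbf{y}$.

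These two steps together produce a bijection $\mathfrak{F}: \mathfrak{F}^{-1}(V) \to V$ with $\mathfrak{F}^{-1}(V) \subseteq W$. Smoothness of the inverse on $V$ then follows by applying the classical inverse function theorem at each preimage point (nondegenerate Jacobian there by (ii)) and invoking uniqueness from the injectivity step, upgrading the set-theoretic bijection to a diffeomorphism. The main obstacle I anticipate lies in the injectivity step: since the threshold $|\det A|/(n \cdot n! \cdot a_{\max}^{n-1})$ is sharp, matching it requires a careful choice of matrix norm so that the combinatorial factors of $n$ and $(n-1)!$ arising from the cofactor expansion line up exactly as claimed, with no loss to spare.
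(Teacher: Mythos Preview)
Your proposal is correct and follows essentially the same architecture as the paper's proof in Appendix~\ref{AppB}: a quantitative injectivity estimate on $W$ via the perturbation $\textnormal{Jac}\mathfrak{F} - A$ combined with the cofactor bound $|[A^{-1}]_{ij}| \leq (n-1)!\,a_{\max}^{n-1}/|\det A|$, then surjectivity onto $V$ by minimizing $\|\mathfrak{F}(\mathbf{x}) - \mathbf{y}\|^2$ over $\overline{W}$ and using hypothesis~(ii) at the interior minimizer. The paper packages the injectivity step as the two-sided Lipschitz bound of Claim~\ref{claimB2} (using the coordinatewise mean value theorem rather than your integral form, but the arithmetic is identical), and for smoothness it re-derives differentiability of $\mathfrak{F}^{-1}$ from scratch and then inducts on $\mathcal{C}^k$, whereas you shortcut this by citing the classical inverse function theorem locally and patching via uniqueness---both routes are valid and yield the same conclusion.
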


Let $F \in \mathbb{R}[x_1, \ldots, x_n]$, not necessarily homogenous.
Let us define
$$
\mathfrak{G}_{1,2} (\mathbf{x}) = \left( x_1 \frac{\partial^{2} F}{\partial x_1^2 } (\mathbf{x})  + \frac{\partial F}{\partial x_1} (\mathbf{x})   \right) \cdot \left( x_2 \frac{\partial^{2} F}{\partial x_2^2}(\mathbf{x})
+ \frac{\partial F}{\partial x_2}(\mathbf{x})  \right) -
x_1 x_2 \left( \frac{\partial^{2} F}{\partial{x_1} \partial{x_2}}(\mathbf{x}) \right)^2 .
$$

We consider the following two cases separately.

Case (I): There exist $\mathbf{x}_0 \in (0,1)^n$ and $\delta > 0$ such that
given any $\mathbf{x} \in (\mathbf{x}_0 + [- \delta, \delta]^n)$ we have
\begin{eqnarray}
\label{set of condn}
\mathfrak{G}_{1,2} (\mathbf{x}) \not = 0,  \  \  \  \frac{\partial^{2} F}{\partial x_1 \partial x_2 } (\mathbf{x})  \not = 0,
\end{eqnarray}
$$
x_1 \frac{\partial^{2} F}{\partial x_1^2 }(\mathbf{x})  + \frac{\partial F}{\partial x_1}(\mathbf{x}) \not = 0 \  \   \text{  and  }   \  \    x_2 \frac{\partial^{2} F}{\partial x_2^2 }(\mathbf{x})  + \frac{\partial F}{\partial x_2}(\mathbf{x}) \not = 0.
$$

Case (II): We have $\frac{\partial^2 F}{\partial x_1 \partial x_2} \equiv 0$. Furthermore, there exist $\mathbf{x}_0 \in (0,1)^n$ and $\delta > 0$ such that
given any $\mathbf{x} \in (\mathbf{x}_0 + [- \delta, \delta]^n)$ we have
\begin{eqnarray}
\label{set of condn'}
\mathfrak{G}_{1,2} (\mathbf{x}) \not = 0,
\ \
x_1 \frac{\partial^{2} F}{\partial x_1^2 }(\mathbf{x})  + \frac{\partial F}{\partial x_1}(\mathbf{x}) \not = 0 \  \   \text{  and  }   \  \
x_{2} \frac{\partial^{2} F}{\partial x_2^2 } (\mathbf{x})  + \frac{\partial F}{\partial x_2} (\mathbf{x}) \not = 0.
\end{eqnarray}

We prove the following result from which we deduce Theorem \ref{mainthm} in Section \ref{secdeducemain}.
\begin{prop}
\label{mainprop}
Let $F  \in \mathbb{R}[x_1, \ldots, x_n]$ be a polynomial, not necessarily homogeneous,
satisfying the hypotheses of either Case \textnormal{(I)} or Case \textnormal{(II)}.
Let $\varpi : \mathbb{R}^n \rightarrow \mathbb{R}$ be a smooth function satisfying
\begin{eqnarray}
\label{smoothweightbound}
\max_{\mathbf{x} \in \mathbb{R}^n } | \varpi (\mathbf{x}) | +
\max_{1 \leq j \leq n}  \max_{\mathbf{x} \in \mathbb{R}^n }  \Big{|} \frac{\partial \varpi}{ \partial x_j }  (\mathbf{x}) \Big{|} +
\max_{1 \leq j \leq k \leq n}  \max_{\mathbf{x} \in \mathbb{R}^n }  \Big{|} \frac{\partial^2 \varpi}{ \partial x_j \partial x_k} (\mathbf{x}) \Big{|} < \mathfrak{C},
\end{eqnarray}
and its support contained in $(\mathbf{x}_0 + [- \delta_0, \delta_0]^n)$ for $\delta_0 > 0$.
Suppose $\delta_0$ is sufficiently small \textnormal{(}the choice of $\delta_0$ here depends only on $\delta$, $F$ and $\mathbf{x}_0$\textnormal{)}. Then we have
\begin{eqnarray}\label{mainintegral2}
\Big{|} \int_{- \infty}^{\infty} \cdots \int_{- \infty}^{\infty}
\varpi(\mathbf{x}) x_1^{i t_1} \cdots x_n^{i t_n} e^{2 \pi i \tau F(\mathbf{x})} d \mathbf{x}  \Big{|} \ll \min \{ 1, |\tau|^{-1} \},
\end{eqnarray}
where the implicit constant is independent of $\tau$, $\mathbf{t}$ and the specific choice of $\varpi$
\textnormal{(}but it will depend on $\mathfrak{C}$, $F$, $\mathbf{x}_0$ and $\delta_0$\textnormal{)}.
\end{prop}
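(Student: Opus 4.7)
The plan is to reduce the statement to a $2$-dimensional oscillatory integral in $(x_1, x_2)$, treating $(x_3, \ldots, x_n)$ as parameters, and to combine integration by parts in a non-stationary regime with a uniform $2$-dimensional stationary phase estimate in a stationary regime. The bound $\ll 1$ is immediate since $|x_j^{i t_j}| = 1$ on the positive support of $\varpi$, so I focus on $\ll |\tau|^{-1}$ for $|\tau|\ge 1$. Write the full phase as
\[
\Psi(\mathbf{x}) = t_1 \log x_1 + \cdots + t_n \log x_n + 2 \pi \tau F(\mathbf{x}),
\]
and observe the key algebraic identity
\[
x_j \frac{\partial \Psi}{\partial x_j}(\mathbf{x}) = t_j + 2\pi \tau\, G_j(\mathbf{x}), \qquad G_j(\mathbf{x}) := x_j \frac{\partial F}{\partial x_j}(\mathbf{x}).
\]
A direct calculation identifies the Jacobian determinant of $(x_1, x_2) \mapsto (G_1, G_2)(\mathbf{x})$ as $\mathfrak{G}_{1, 2}(\mathbf{x})$, and the Hessian determinant of $\Psi$ in $(x_1, x_2)$ at a critical point (i.e.\ a solution of $(G_1, G_2)(\mathbf{x}) = -(t_1, t_2)/(2 \pi \tau)$) as $(2\pi\tau)^2\, \mathfrak{G}_{1, 2}(\mathbf{x})/(x_1 x_2)$; hence the hypotheses of Case (I) or Case (II) make this Hessian non-degenerate of size $\sim \tau^2$, uniformly in $\mathbf{t}$, on $\mathrm{supp}\,\varpi$.

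The argument then splits according to the location of the parameter $\mathbf{s} := -(t_1, t_2)/(2 \pi \tau) \in \mathbb{R}^2$ relative to the image $U := (G_1, G_2)(\mathrm{supp}\,\varpi)$. When $\mathbf{s}$ lies outside a slight enlargement of $U$, then $|t_j + 2 \pi \tau G_j(\mathbf{x})| \geq c|\tau|$ for some $j \in \{1, 2\}$ throughout the support of $\varpi$, and two integrations by parts in that variable (using (\ref{smoothweightbound}) to control the amplitude, and the fact that $x_j$ is bounded away from $0$) produce the factor $|\tau|^{-1}$. When $\mathbf{s}$ is close to $U$, Theorem \ref{thm exp inv} applied to $(G_1, G_2)$ provides a uniform local inverse and locates a unique critical point $\mathbf{x}^* = \mathbf{x}^*(\mathbf{s})$ near $\mathbf{x}_0$; a quantitative $2$-dimensional stationary phase estimate (Morse lemma followed by scaling, with constants depending only on $F$, $\mathbf{x}_0$ and a lower bound for $|\mathfrak{G}_{1,2}|$) then yields the $|\tau|^{-1}$ bound. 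Integrating the resulting uniform $2$-dimensional estimate against the remaining variables $(x_3, \ldots, x_n)$, which range over a bounded region, preserves the bound. Case (II) fits into the same framework, with the simplification that $\partial^2 F/\partial x_1 \partial x_2 \equiv 0$ makes the Hessian of $\Psi$ in $(x_1, x_2)$ diagonal, so one can alternatively factorise the analysis into a pair of one-dimensional stationary phase bounds of size $|\tau|^{-1/2}$.

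The principal technical obstacle is obtaining the stationary phase bound \emph{uniformly} in $\mathbf{t}$, since the critical point $\mathbf{x}^*(\mathbf{s})$ depends on $\mathbf{s}$ and the Morse normal-form change of variables inherits this dependence. However, by compactness of $\mathrm{supp}\,\varpi$ one needs only to consider $\mathbf{s}$ in a bounded subset of $\mathbb{R}^2$; for such $\mathbf{s}$, Theorem \ref{thm exp inv} (applied after shrinking $\delta_0$ to exploit (\ref{set of condn}) or (\ref{set of condn'})) guarantees a common neighbourhood of $\mathbf{x}_0$ on which the inverse map is smooth with derivatives bounded independently of $\mathbf{s}$. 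Standard quantitative Morse-lemma estimates then produce uniform stationary-phase constants, after which the smooth cutoff in $\mathbf{s}$, the integrations by parts, and the final integration in $(x_3, \ldots, x_n)$ are routine.
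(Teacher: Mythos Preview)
Your proposal is correct and takes essentially the same approach as the paper: reduce to the two variables $(x_1,x_2)$, use the map $(G_1,G_2)$ with Jacobian $\mathfrak{G}_{1,2}$ to split into a non-stationary regime (handled by integration by parts) and a stationary regime (handled by a uniform Morse-lemma change of variables followed by an explicit $|\tau|^{-1}$ estimate), with the uniformity in $\mathbf{t}$ supplied by the explicit inverse function theorem. The paper's Sections~3--7 carry out exactly the programme you describe, including a short box-geometry argument (Section~5) to ensure that a \emph{single} index $j_0\in\{1,2\}$ works throughout the support in the non-stationary case---a point your sketch leaves implicit but which is needed for the integration by parts to go through cleanly.
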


The idea of the proof is the following. In order to achieve the above estimate we only need
to focus on two of the variables. If we can obtain a lower bound for one of the first partial derivatives
then we are done by a basic result on oscillatory integrals. On the other hand, if both of the first partial derivatives are
close to $0$ then we essentially use the stationary phase method. The main complication is making sure all the
estimates we obtain are uniform in our parameters. We begin with the box given in
Case (I) and we shrink it essentially three times so that all the points satisfy certain desired conditions,
and then we obtain the estimate on the integral; the proof for Case (II) is very similar to that of Case (I) and we keep the details to a minimum.
Since the estimate (\ref{mainintegral2}) is trivial when $|\tau| \leq 1$, we assume $|\tau| > 1$ throughout the proof.
Also given $\mathcal{M}, \mathcal{N} \subseteq \mathbb{R}^n$ we use the notations
$\mathcal{M} + \mathcal{N} = \{ \mathbf{m} + \mathbf{n} : \mathbf{m} \in \mathcal{M}, \mathbf{n} \in \mathcal{N}\},$
$\mathcal{M} - \mathcal{N} = \{ \mathbf{m} - \mathbf{n} : \mathbf{m} \in \mathcal{M}, \mathbf{n} \in \mathcal{N}\}$
and $-\mathcal{M} = \{ - \mathbf{m}: \mathbf{m} \in \mathcal{M} \}$.
\section{First Box - Case (I)}
\label{secfirstbox}
Let us suppose $F$ satisfies the hypotheses of Case (I).
Let $\mathcal{B} = (\mathbf{x}_0 + [- \delta, \delta]^n)$,
without loss of generality we assume $\mathcal{B} \subseteq (0,1)^n$, and we define
\begin{eqnarray}
\label{defnm0m1}
m_0 = \min_{ \mathbf{x} \in  \mathcal{B}}  \   | \mathfrak{G}_{1,2} (\mathbf{x}) |,
\end{eqnarray}
\begin{eqnarray}
\label{defm2}
m_1 = \min_{1 \leq i \leq 2} \ \min_{ \mathbf{x} \in  \mathcal{B}} \  \Big{|} x_i \frac{\partial^{2} F}{\partial x_i^2 }(\mathbf{x})  + \frac{\partial F}{\partial x_i} (\mathbf{x}) \Big{|} \ \  \text{ and } \  \
m_2 =  \min_{ \mathbf{x} \in  \mathcal{B}} \  \Big{|} \frac{\partial^{2} F}{\partial x_1 \partial x_2 } (\mathbf{x}) \Big{|}.
\end{eqnarray}
Because we are considering Case (I) it follows that $m_0, m_1, m_2 > 0$.
We denote
\begin{eqnarray}
\label{defnrho1}
\mathcal{B} = [\rho_{1,1}, \rho_{1,2}] \times \cdots \times [\rho_{n,1}, \rho_{n,2}],
\end{eqnarray}
\begin{eqnarray}
\label{defnrho2}
\rho_{\min} =  \min_{1 \leq i \leq n } \rho_{i,1} \ \ \ \text{  and  } \ \  \  \rho_{\max} =  \max_{1 \leq i \leq n} \rho_{i,2}.
\end{eqnarray}
Furthermore, let us denote
\begin{eqnarray}
\label{defnrho3}
\mathcal{B}_0 = [\rho_{1,1}, \rho_{1,2}] \times [\rho_{2,1}, \rho_{2,2}] \ \ \text{  and  } \  \
\mathcal{B}_0' = [\rho_{3,1}, \rho_{3,2}] \times \cdots \times [\rho_{n,1}, \rho_{n,2}].
\end{eqnarray}

Let $\mathbf{u} = (u_1, u_2) = (x_1, x_2)$ and $\mathbf{v} = (x_3, \ldots, x_n)$, and also let $\mathbf{u}_0 = (x_{0,1}, x_{0,2})$ and
$\mathbf{v}_0 = (x_{0,3}, \ldots, x_{0,n})$ so that $\mathbf{x}_0 = (\mathbf{u}_0, \mathbf{v}_0)$. Note if $n=2$ then there is no need to consider the vectors $\mathbf{v}$ and $\mathbf{v}_0$. Let us define $\Psi_{\mathbf{v}} = (\Psi_{\mathbf{v},1}, \Psi_{\mathbf{v},2}): \mathbb{R}^2 \rightarrow \mathbb{R}^2$, where
$$
\Psi_{\mathbf{v},1} (\mathbf{u}) = u_1 \frac{\partial F}{\partial u_1} (u_1, u_2, \mathbf{v})
\ \
\text{  and  }
\ \
\Psi_{\mathbf{v}, 2} (\mathbf{u}) = u_2 \frac{\partial F}{\partial u_2} (u_1, u_2, \mathbf{v}).
$$
Note
\begin{eqnarray}
\label{detJacpsi}
\det( \textnormal{Jac}\Psi_{\mathbf{v}}(\mathbf{u})) = \mathfrak{G}_{1,2}(\mathbf{u}, \mathbf{v}).
\end{eqnarray}

\begin{claim}\label{claim1} Let $\delta_1 > 0$, $\mathcal{B}_1 = (\mathbf{u}_0 + (- \delta_1, \delta_1)^2)$ and  $\mathcal{B}'_1 = (\mathbf{v}_0 + [- \delta_1, \delta_1]^{n-2})$.  Then for $\delta_1 > 0$ sufficiently small, $\Psi$ is a diffeomorphism on $\mathcal{B}_1 \subseteq \mathcal{B}_0$ for any $\mathbf{v} \in \mathcal{B}'_1$.
\end{claim}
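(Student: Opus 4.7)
The plan is to verify directly the hypotheses of the explicit inverse function theorem (Theorem \ref{thm exp inv}) for the map $\Psi_{\mathbf{v}}$ at the base point $\mathbf{u}_0$, with the set $W$ taken to be the open square $\mathcal{B}_1$, making sure that the choice of $\delta_1$ can be made uniform in $\mathbf{v} \in \mathcal{B}'_1$. The linearization at $\mathbf{u}_0$ is $A = \textnormal{Jac}\, \Psi_{\mathbf{v}}(\mathbf{u}_0)$, whose entries are explicit combinations of $u_{0,i}$, $\partial F/\partial u_i(\mathbf{u}_0, \mathbf{v})$ and $\partial^2 F/\partial u_i \partial u_j(\mathbf{u}_0, \mathbf{v})$, and whose determinant equals $\mathfrak{G}_{1,2}(\mathbf{u}_0, \mathbf{v})$ by (\ref{detJacpsi}). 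By the definition (\ref{defnm0m1}) of $m_0$ and the Case (I) hypothesis we have $|\det A| \geq m_0 > 0$ uniformly in $\mathbf{v} \in \mathcal{B}'$; moreover the entries of $A$ are continuous functions of $\mathbf{v}$ on the compact box $\mathcal{B}'_1$ (once $\delta_1 \leq \delta$), hence $a_{\max}$ is bounded above by some constant depending only on $F$ and $\mathcal{B}$. Consequently the quantity $M_0 := |\det A|/(n \cdot n! \cdot a_{\max}^{n-1})$ (with $n=2$) can be bounded below by a fixed positive constant $M > 0$, independent of $\mathbf{v}$.

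Next I would check the three conditions of Theorem \ref{thm exp inv} with $W = \mathcal{B}_1$. Condition (i) is automatic because $\mathbf{u}_0$ is the center of $\mathcal{B}_1$, and $\mathcal{B}_1$ is open, bounded and convex. For (ii), the function $\mathfrak{G}_{1,2}$ is continuous and nonvanishing on the compact set $\mathcal{B}$, so choosing $\delta_1 \leq \delta$ guarantees $\det (\textnormal{Jac}\, \Psi_{\mathbf{v}}(\mathbf{u})) \neq 0$ on $\mathcal{B}_1 \times \mathcal{B}'_1$. For (iii), the entries of $\textnormal{Jac}\, \Psi_{\mathbf{v}}(\mathbf{u})$ are polynomials in the components of $(\mathbf{u}, \mathbf{v})$ together with the derivatives of $F$ up to order two, all of which are uniformly continuous on $\mathcal{B}$. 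Therefore there exists $\delta_1 > 0$, depending only on $F$, $\mathbf{x}_0$ and $\delta$, such that
\[
\Big| \frac{\partial \Psi_{\mathbf{v}, i}}{\partial u_j}(\mathbf{u}) - \frac{\partial \Psi_{\mathbf{v}, i}}{\partial u_j}(\mathbf{u}_0) \Big| < M
\]
for all $\mathbf{u} \in \mathcal{B}_1$, $\mathbf{v} \in \mathcal{B}'_1$ and $1 \leq i,j \leq 2$.

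Having verified (i)--(iii) uniformly in $\mathbf{v}$, Theorem \ref{thm exp inv} yields an open set $V_{\mathbf{v}} \subseteq \mathbb{R}^2$ on which $\Psi_{\mathbf{v}}^{-1}$ is defined and smooth, with $\Psi_{\mathbf{v}}^{-1}(V_{\mathbf{v}}) \subseteq \mathcal{B}_1$ diffeomorphic to $V_{\mathbf{v}}$. Since $\Psi_{\mathbf{v}}$ has nonvanishing Jacobian everywhere on $\mathcal{B}_1$, this gives in particular that $\Psi_{\mathbf{v}}$ restricted to $\mathcal{B}_1$ is a local diffeomorphism onto its image, and the explicit construction makes this conclusion uniform in $\mathbf{v} \in \mathcal{B}'_1$, proving the claim.

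The main obstacle, as is typical in this paper, is precisely the uniformity: the inverse function theorem on its own gives a neighborhood depending on $\mathbf{v}$, and one must extract a single $\delta_1$ that works for every $\mathbf{v} \in \mathcal{B}'_1$. This is where the explicit version (Theorem \ref{thm exp inv}) is essential, because it reduces uniformity to a quantitative bound on the oscillation of the partial derivatives, which is then handled by the compactness of $\mathcal{B}$ and the uniform continuity of the second-order partials of $F$.
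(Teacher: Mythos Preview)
Your argument has a genuine gap at the last step. You apply Theorem \ref{thm exp inv} with $W = \mathcal{B}_1$; the stated conclusion of that theorem is only that $\Psi_{\mathbf{v}}^{-1}$ is well-defined on the ball $V_{\mathbf{v}}$ and that $\Psi_{\mathbf{v}}^{-1}(V_{\mathbf{v}}) \subseteq \mathcal{B}_1$ is diffeomorphic to $V_{\mathbf{v}}$. It does \emph{not} say that $\Psi_{\mathbf{v}}$ is a diffeomorphism on all of $W = \mathcal{B}_1$, and there is no reason for $\Psi_{\mathbf{v}}^{-1}(V_{\mathbf{v}})$ to equal $\mathcal{B}_1$. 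You then retreat to ``$\Psi_{\mathbf{v}}$ is a local diffeomorphism on $\mathcal{B}_1$'', but that follows from the nonvanishing Jacobian alone and is strictly weaker than what the claim asserts and what Remark \ref{rem1} uses (global injectivity on $\mathcal{B}_1$, needed for uniqueness of the critical point).

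The paper avoids this by a two-scale argument: it first takes $W = (\mathbf{u}_0 + (-\delta',\delta')^2)$ at an intermediate scale $\delta'$, verifies (i)--(iii) there, and then shows that $m := \min_{\mathbf{v}} \min_{\mathbf{u} \in \partial W} \|\Psi_{\mathbf{v}}(\mathbf{u}) - \Psi_{\mathbf{v}}(\mathbf{u}_0)\|$ is strictly positive, uniformly in $\mathbf{v}$ near $\mathbf{v}_0$ (using the auxiliary map $\Phi$ on $\mathbb{R}^n$ and compactness). Only then is $\delta_1$ chosen, small enough that $\Psi_{\mathbf{v}}(\mathcal{B}_1)$ lies inside the ball $V_{\mathbf{v}}$ of radius $m/4$, which forces $\mathcal{B}_1 \subseteq \Psi_{\mathbf{v}}^{-1}(V_{\mathbf{v}}) \subseteq W$. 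Thus $\Psi_{\mathbf{v}}|_{\mathcal{B}_1}$ is the restriction of a genuine diffeomorphism. Your argument is missing precisely this step: either shrink a second time so that $\mathcal{B}_1$ sits inside the set on which Theorem \ref{thm exp inv} actually guarantees a diffeomorphism, or else invoke directly the injectivity estimate (Claim \ref{claimB2} in the appendix) that conditions (i)--(iii) already force $\Psi_{\mathbf{v}}$ to be injective on $W$.
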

\begin{proof}
Given a matrix $\mathcal{L}$ let $[\mathcal{L}]_{i,j}$ denote the $(i,j)$-th entry of $\mathcal{L}$.
First we let
$$
\mathfrak{a} =  \max_{1 \leq i, j \leq 2}  \ \max_{\mathbf{v} \in \mathcal{B}_0'} |  [\textnormal{Jac}\Psi_{\mathbf{v}}(\mathbf{u}_0)]_{i,j}  |
$$
and
$$
M = \frac{m_0}{8 \mathfrak{a} }.
$$
Then for any $\mathbf{v} \in \mathcal{B}'_0$ it follows from (\ref{detJacpsi}) that
$$
M <  \frac{ | \det (\textnormal{Jac}\Psi_{\mathbf{v}}(\mathbf{u}_0) )| }{ 2 \cdot 2! \cdot \max_{1 \leq i, j \leq 2} |  [\textnormal{Jac}\Psi_{\mathbf{v}}(\mathbf{u}_0)]_{i,j}  |  }.
$$
Let $W = (\mathbf{u}_0 + (- \delta', \delta')^2)$ where $\delta' > 0$ is sufficiently small (in particular $\delta' < \delta$).
We verify that $W$ satisfies the three properties of Theorem \ref{thm exp inv}.

i) It is clear from the definition that $\mathbf{u}_0 \in W$.

ii) We have $\det( \textnormal{Jac}\Psi_{\mathbf{v}}(\mathbf{u})) \not = 0$ $(\mathbf{u} \in W)$, because
of (\ref{detJacpsi}), (\ref{set of condn}) and $0 < \delta' < \delta$.

iii) We now show for any $\mathbf{v} \in (\mathbf{v}_0 + (- \delta', \delta')^{n-2})$,
\begin{eqnarray}
\label{ineq ofiii1}
\Big{|} \frac{\partial \Psi_{\mathbf{v}, i}}{\partial u_j}(\mathbf{u})  -  \frac{\partial \Psi_{\mathbf{v}, i} }{\partial u_j}(\mathbf{u}_0)  \Big{|} < M \ \ \
(\mathbf{u} \in W, 1 \leq i, j \leq 2).
\end{eqnarray}
Given any $\mathbf{u} \in W$ it follows from the mean value theorem that there exists $\mathbf{c} \in W$ such that
\begin{eqnarray}
\frac{\partial \Psi_{\mathbf{v},i}}{\partial u_j}(\mathbf{u})  -  \frac{\partial \Psi_{\mathbf{v}, i}}{\partial u_j}(\mathbf{u}_0) = \nabla \frac{\partial \Psi_{\mathbf{v}, i} }{\partial u_j} (\mathbf{c}) \cdot ( \mathbf{u} - \mathbf{u}_0) .
\end{eqnarray}
It can be verified that there exists $C'>0$ depending only on $F$, $\mathcal{B}_0$ and $\mathcal{B}'_0$ such that
$$
\max_{1 \leq i, j, k \leq 2} \max_{ \substack{ \mathbf{u} \in \mathcal{B}_0 \\  \mathbf{v} \in \mathcal{B}'_0  } }
\Big{|} \frac{\partial^2 \Psi_{\mathbf{v},i} }{\partial u_j \partial u_k} (\mathbf{u}) \Big{|} < C'.
$$
Therefore, by choosing $\delta' > 0$ sufficiently small with respect to $F$, $\mathcal{B}_0$ and $\mathcal{B}'_0$
we have $C' \delta' < M$; (\ref{ineq ofiii1}) is satisfied.

Let $\| \cdot \|$ be the $L^2$-norm on $\mathbb{R}^2$.  Let $\Phi = (\Phi_1, \ldots, \Phi_n):\mathbb{R}^n \rightarrow \mathbb{R}^n$, where
$\Phi_1(\mathbf{u}, \mathbf{v}) = \Psi_{\mathbf{v},1}(\mathbf{u})$, $\Phi_2(\mathbf{u}, \mathbf{v}) = \Psi_{\mathbf{v},2}(\mathbf{u})$,
and $\Phi_j(\mathbf{u}, \mathbf{v}) = v_j$ $(3 \leq j \leq n)$.
Then we have
$$
\det (\textnormal{Jac} \Phi(\mathbf{u}_0, \mathbf{v}_0)   ) = \mathfrak{G}_{1,2}(\mathbf{u}_0, \mathbf{v}_0) \not = 0.
$$
Therefore, as a consequence of the inverse function theorem we obtain
$$
0 < m = \min_{\mathbf{v} \in ( \mathbf{v}_0 + [- \delta', \delta']^{n-2} ) } \  \min_{\mathbf{u} \in \partial W} \| \Psi_{\mathbf{v}}(\mathbf{u}) - \Psi_{\mathbf{v}}(\mathbf{u}_0)  \|,
$$
provided $\delta' > 0$ is sufficiently small (with respect to $F$ and $\mathbf{x}_0 = (\mathbf{u}_0, \mathbf{v}_0)$);
$m > 0$ because $\| \Psi_{\mathbf{v}}(\mathbf{u}) - \Psi_{\mathbf{v}}(\mathbf{u}_0)  \|$ is continuous and it is strictly greater than $0$  on the compact
set $\partial W \times ( \mathbf{v}_0 + [- \delta', \delta']^{n-2} )$.

Given any $\mathbf{v} \in \mathcal{B}'_0$ we define
$$
V_{\mathbf{v}} = \{ \mathbf{y} \in \mathbb{R}^2 : \| \mathbf{y} - \Psi_{\mathbf{v}}(\mathbf{u}_0)  \| < m/4 \}.
$$
Let us take $\delta_1 > 0$ sufficiently small (in particular $\delta_1 < \delta'/2$) such that
$$
\| \Psi_{\mathbf{v}}(\mathbf{u})  -  \Psi_{\mathbf{v}}(\mathbf{u}_0)  \| < m/5
$$
for any $\mathbf{u}$ and $\mathbf{v}$ satisfying $\| \mathbf{u} - \mathbf{u}_0 \|_{\infty} < 2\delta_1$
and  $\| \mathbf{v} - \mathbf{v}_0 \|_{\infty} < 2\delta_1$ respectively.
Then we see that
$$
\mathcal{B}_1 \subseteq \Psi_{\mathbf{v}}^{-1} (V_{\mathbf{v}}) \subseteq W
$$
holds for any $\mathbf{v} \in \mathcal{B}'_1$.
Therefore, it follows from Theorem \ref{thm exp inv} that $\Psi |_{\mathcal{B}_1}$ is a diffeomorphism for any $\mathbf{v} \in \mathcal{B}'_1$.
\end{proof}
Let us choose  $\delta_1$ to satisfy 
\begin{eqnarray}
\label{cond1'onB1}
\delta_1 < \frac{\rho_{\min}}{2}
\end{eqnarray}
as well.
\begin{rem}
The choice of $\delta_1$ depended only on $F$, $\mathbf{x}_0$ and $\delta$.
\end{rem}
For simplicity we let
$$
G(\mathbf{u}) = F(u_1, u_2, \mathbf{v})
$$
with the understanding that the polynomial $G(\mathbf{u})$ depends on $\mathbf{v}$.
Let $\lambda_1$ and $\lambda_2$ be the smallest positive numbers satisfying
\begin{eqnarray}
\label{lambdacond2}
\Big{|} u^2_j \frac{\partial^2 G}{\partial u_j^2} (u_1, u_2)   \Big{|} + \Big{|} u_j \frac{\partial G}{\partial u_j} (u_1, u_2)   \Big{|} \leq \frac{\lambda_j}{2} \ \ (1 \leq j \leq 2)
\end{eqnarray}
for all $\mathbf{u} \in \overline{\mathcal{B}_1}$ and
$\mathbf{v} \in {\mathcal{B}'_1}$.
The reason for these choices of $\lambda_1$ and $\lambda_2$ will be clear later.
\begin{rem}
\label{rem1}
Let $A_1, A_2 \in \mathbb{R}$ and $\mathbf{v} \in \mathcal{B}'_1$. It can be verified easily that $\mathbf{z}_0 = (z_{0,1}, z_{0,2}) \in \mathcal{B}_1$ is a critical point of the function $G(\mathbf{u}) + A_1 \log u_1 + A_2 \log u_2$ if and only if
$$
\Psi_{\mathbf{v}, j}(\mathbf{z}_0) = z_{0,j} \frac{\partial G}{\partial u_j}(\mathbf{z}_0) = - A_j \ \ (1 \leq j \leq 2).
$$
Since  $\mathcal{B}_1$ is diffeomorphic to $\Psi_{\mathbf{v}}(\mathcal{B}_1)$, each pair of values
$(\Psi_{\mathbf{v},1}(\mathbf{u}), \Psi_{\mathbf{v}, 2}(\mathbf{u}))$ gets represented only once over $\mathbf{u} \in \mathcal{B}_1$.
Thus for a fixed choice of $(A_1, A_2)$ there is at most one $\mathbf{z}_0 \in \mathcal{B}_1$ for which it is a critical point
of $G(\mathbf{u}) + A_1 \log u_1 + A_2 \log u_2$ over $\mathbf{u} \in \mathcal{B}_1$.
\end{rem}

\section{Second Box - Case (I)}
\label{secsecondbox}
Fix $\mathbf{v} \in \mathcal{B}'_1$.
For $\tau \not = 0$, we have
$$
u_1^{i t_1} u_2^{i t_2} e \left( \tau F(u_1, u_2, \mathbf{v}) \right)
= e^{ i 2 \pi \tau \left( G(u_1, u_2) + \frac{t_1}{2 \pi \tau} \log u_1 + \frac{t_2}{2 \pi \tau} \log u_2  \right)}.
$$
Let
$$
A_j = \frac{t_j}{2 \pi \tau} \ \ (1 \leq j \leq 2).
$$
We now deal with the case $A_j \in [- \lambda_j, \lambda_j]$ $(1 \leq j \leq 2)$.
The critical points of the function $G(\mathbf{u}) + A_1 \log u_1 + A_2 \log u_2$ satisfy
\begin{eqnarray}
\label{crit pt condn}
\frac{\partial G}{\partial u_j} (u_1, u_2) + \frac{A_j}{u_j} = 0  \ \ \ (1 \leq j \leq 2).
\end{eqnarray}
Suppose $\mathbf{z}_0 = (z_{0,1}, z_{0,2}) \in \mathcal{B}_1$ is a critical point.
Then it follows from our choice of $\mathcal{B}_1$ and $m_0$ (defined in (\ref{defnm0m1})) that
\begin{eqnarray}
\label{m0ineq}
\Big{|} \left( \frac{\partial^{2} G}{\partial u_1^2 } (\mathbf{z}_0)  - \frac{A_1}{z_{0,1}^2}  \right) \cdot \left( \frac{\partial^{2} G}{\partial u_2^2} (\mathbf{z}_0)
- \frac{A_2}{z_{0,2}^2}
\right)
-
\left( \frac{\partial^{2} G}{\partial{u_1} \partial{u_2}} (\mathbf{z}_0) \right)^2 \Big{|}
&=&
\frac{| \mathfrak{G}_{1,2}(\mathbf{z}_0, \mathbf{v} ) | }{z_{0,1} z_{0,2} }
\\
\notag
&\geq& \frac{m_0}{\rho_{\max}^2 }.
\end{eqnarray}

Let $\widetilde{\mathcal{B}}_1 = (\mathbf{u}_0 + [-\delta_1/2, \delta_1/2]^2)$.
Suppose there exists a critical point $\mathbf{z}_0 = (z_{0,1}, z_{0,2}) \in \widetilde{\mathcal{B}}_1$, in which case we know from Remark \ref{rem1} that this is the only critical point in $\widetilde{\mathcal{B}}_1$.
Let us define
\begin{eqnarray}
\label{defn phi}
\phi(\mathbf{u}) &=& G(u_1 + z_{0,1}, u_2 + z_{0,2}) +  A_1 \log (u_1 + z_{0,1}) + A_2 \log (u_2 + z_{0,2})
\\
\notag
&-& G(z_{0,1}, z_{0,2}) - A_1 \log z_{0,1} - A_2 \log z_{0,2}.
\end{eqnarray}
We consider this function over $\mathbf{u} \in (- \mathbf{z}_0 + \widetilde{\mathcal{B}}_1) \subseteq [- \delta_1, \delta_1]^2$.
We have $\phi(\mathbf{0}) = \nabla \phi(\mathbf{0}) = \mathbf{0}$, and $\mathbf{0} \in (- \mathbf{z}_0 + \widetilde{\mathcal{B}}_1)$ is the
only critical point of $\phi(\mathbf{u})$ in $(- \mathbf{z}_0 + \widetilde{\mathcal{B}}_1)$. It follows from integration by parts that
\begin{eqnarray}
\label{eqnphi1}
\phi(\mathbf{u}) = \int_0^1 \frac{d}{dt} \phi(t \mathbf{u}) dt = \int_{0}^1 (1 - t) \frac{d^2}{dt^2} \phi(t \mathbf{u}) dt,
\end{eqnarray}
and the final expression becomes
\begin{eqnarray}
\label{eqnphi2}
\\
\notag
\int_{0}^1 (1-t)\sum_{1 \leq i,j \leq 2} u_i u_j \frac{\partial^2 G}{ \partial u_i \partial u_j }(tu_1 + z_{0,1}, tu_2 + z_{0,2}) - (1-t)\sum_{j=1}^2 A_j \frac{u_j^2}{(t u_j + z_{0,j})^2} \ dt.
\end{eqnarray}
Let $k_1, k_2 \in \mathbb{Z}_{\geq 0}$, $\mathbf{k} = (k_1, k_2)$ and $\mathbf{u}^{\mathbf{k}} = u_1^{k_1}u_2^{k_2}$.
Let us denote
$$
\frac{\partial^2 G}{ \partial u_i \partial u_j }(\mathbf{u}) = \sum_{\mathbf{k} \in [0, d]^2 } c_{i,j; \mathbf{k}}(\mathbf{v}) \  \mathbf{u}^{\mathbf{k}}.
$$
Clearly $| c_{i,j; \mathbf{k}}(\mathbf{v})| \ll 1$ for all $1 \leq i, j \leq 2$ and $\mathbf{k} \in [0, d]^2$, and the implicit constant in the bound is independent of the specific choices of $\mathbf{v} \in \mathcal{B}'_1$ and the critical point $\mathbf{z}_0 \in \widetilde{\mathcal{B}}_1$.
Then we have
\begin{eqnarray}
\frac{\partial^2 G}{ \partial u_i \partial u_j }(t u_1 + z_{0,1}, t u_2 + z_{0,2})
&=&  \sum_{\mathbf{k} \in [0, d]^2 } c_{i,j; \mathbf{k}}(\mathbf{v})  (t u_1 + z_{0,1})^{k_1} (t u_2 + z_{0,2})^{k_2}
\\
\notag
&=&
\frac{\partial^2 G}{ \partial u_i \partial u_j }(\mathbf{z}_0) + \sum_{\ell=1}^{d - 2} t^{\ell} H_{i,j}^{(\ell)} (\mathbf{u} ; \mathbf{z}_0),
\end{eqnarray}
where $H_{i,j}^{(\ell)}(\mathbf{u}; \mathbf{z}_0)$ is the homogeneous degree $\ell$ (in $\mathbf{u}$) portion
of $\frac{\partial^2 G}{ \partial u_i \partial u_j }(u_1 + z_{0,1}, u_2 + z_{0,2})$.
Note $H_{i,j}^{(\ell)}(\mathbf{u}; \mathbf{z}_0)$ may be identically $0$. It is also clear
from the definition of $H_{i,j}^{(\ell)}(\mathbf{u}; \mathbf{z}_0)$ that $H_{i,j}^{(\ell)}(\mathbf{0}; \mathbf{z}_0)$ is identically $0$.
Since $\int_0^1 (1 - t) t^{\ell} dt = \frac{1}{(\ell+1)(\ell+2)}$ $(\ell \in \mathbb{N} \cup \{0\})$, we obtain
\begin{eqnarray}
\label{eqnphi3}
&&\int_{0}^1 (1-t)  u_i u_j \frac{\partial^2 G}{ \partial u_i \partial u_j }(t u_1 + z_{0,1}, t u_2 + z_{0,2}) \ dt
\\
\notag
&=&
\frac12 u_i u_j \frac{\partial^2 G}{ \partial u_i \partial u_j }(\mathbf{z}_0) + \sum_{\ell=1}^{d-2} \frac{u_i u_j}{(\ell+1)(\ell+2)}H^{(\ell)}_{i,j} (\mathbf{u} ; \mathbf{z}_0).
\end{eqnarray}
We also have
\begin{eqnarray}
\label{eqnphi4}
&&\sum_{j=1}^2 A_j u_j^2 \int_{0}^1  \frac{1-t}{(t u_j + z_{0,j})^2}  dt
\\
&=& \sum_{j=1}^2 A_j u_j \int_{z_{0,j}}^{u_j + z_{0,j}} \frac{1}{y^2}  dy
- \sum_{j=1}^2 A_j \int_{z_{0,j} }^{u_j + z_{0,j} } \frac{y - z_{0,j} }{y^2}  dy
\notag
\\
&=& \sum_{j=1}^2 A_j \frac{u_j^2}{(u_j + z_{0,j}) z_{0,j}}
- \sum_{j=1}^2 A_j (\log (u_j + z_{0,j}) - \log z_{0,j} )
- A_j z_{0,j}  \left( \frac{-1}{u_j + z_{0,j}} - \frac{-1}{z_{0,j}}  \right)
\notag
\\
&=& \sum_{j=1}^2 A_j \frac{u_j^2}{(u_j + z_{0,j})z_{0,j}}
-  A_j \log \left( 1 + \frac{u_j}{z_{0,j}}\right)
+  A_j \frac{u_j z_{0,j}}{(u_j + z_{0,j})z_{0,j}}
\notag
\\
&=& \sum_{j=1}^2  A_j u_j^2 \sum_{\ell = 2}^{\infty}  \frac{(-1)^{\ell} u_j^{\ell - 2} }{\ell z_{0,j}^{\ell}}.
\notag
\end{eqnarray}
Note the power series in the above expression is well-defined because of (\ref{cond1'onB1}); (\ref{cond1'onB1}) implies
$$
\frac{|u_j|}{|z_{0,j}|} \leq \frac{\delta_1}{\rho_{\min}} < \frac12.
$$
Therefore, by combining (\ref{eqnphi1}), (\ref{eqnphi2}), (\ref{eqnphi3}) and (\ref{eqnphi4}) we can write
$$
\phi(\mathbf{u}) = \sum_{1 \leq i,j \leq 2} u_i u_j \phi_{i,j}(\mathbf{u}),
$$
where
$$
\phi_{i,j} (\mathbf{u})
=
\left\{
    \begin{array}{ll}
         \sum_{\ell=1}^{d - 2} \frac{H^{(\ell)}_{j,j} (\mathbf{u} ; \mathbf{z}_0)}{(\ell+1)(\ell+2)} + \frac12 \frac{\partial^2 G}{\partial u_j^2 }(\mathbf{z}_0) -
         A_j \sum_{\ell = 2}^{\infty}  \frac{(-1)^{\ell} u_j^{\ell - 2} }{\ell z_{0,j}^{\ell}}
         &\mbox{  if  } i = j ,\\
         \sum_{\ell=1}^{d - 2} \frac{H^{(\ell)}_{i,j} (\mathbf{u} ; \mathbf{z}_0)}{(\ell+1)(\ell+2)} + \frac12 \frac{\partial^2 G}{\partial u_1 \partial u_2}(\mathbf{z}_0)
         &\mbox{  if  } i \not = j.
    \end{array}
\right.
$$
\begin{rem}
\label{remark partial deriv}
For $1 \leq j \leq 2$, we have
\begin{eqnarray}
\frac{ \partial \phi_{j,j} }{ \partial u_j} (\mathbf{u})
=
\sum_{\ell=1}^{d - 2 } \frac{1}{(\ell+1)(\ell+2)} \cdot  \frac{\partial}{\partial u_j } H_{j,j}^{(\ell)} (\mathbf{u} ; \mathbf{z}_0)
-
A_j \sum_{\ell = 3}^{\infty}  \frac{(-1)^{\ell} (\ell - 2) u_j^{\ell - 3} }{\ell z_{0,j}^{\ell}}.
\end{eqnarray}
It is clear from the definition of $\phi_{i,j}$ that other first order partial derivatives of
$\phi_{i,j}$ are independent of $A_1$ and $A_2$.
\end{rem}
It can be verified that $\phi_{i,j}$ is smooth on $(- \delta_1, \delta_1)^2$, $\phi_{i,j}(\mathbf{u}) = \phi_{j,i}(\mathbf{u})$
$(\mathbf{u} \in [- \delta_1, \delta_1]^2)$ and
$$
\phi_{i,j}(0, 0) = \frac12 \cdot \frac{\partial^{2}}{\partial u_i \partial u_j}
\left( G(u_1, u_2) + A_1 \log u_1 + A_2 \log u_2 \right)  \Big{|}_{\mathbf{u} = \mathbf{z}_0}.
$$
In particular, it follows from (\ref{defm2}) and (\ref{crit pt condn}) that
\begin{eqnarray}
\label{m1ineq}
| \phi_{1,1}(0, 0) | &=&  \Big{|}  \frac12 \cdot \frac{\partial^2 G}{\partial u_1^2}(\mathbf{z}_0) -A_1 \frac{1}{2 z_{0,1}^{2}} \Big{|}
\\
\notag
&=& \frac{1}{2 z_{0,1}} \Big{|} z_{0,1} \frac{\partial^2 G}{\partial u_1^2}(\mathbf{z}_0) + \frac{\partial G}{\partial u_1}(\mathbf{z}_0) \Big{|}
\\
\notag
&\geq& \frac{m_1}{2 \rho_{\max}}
\end{eqnarray}
and
\begin{eqnarray}
\label{m1ineq'}
| \phi_{1,2}(0, 0) | =  \Big{|}  \frac12 \cdot \frac{\partial^2 G}{\partial u_1 \partial u_2}(\mathbf{z}_0) \Big{|}
\geq \frac{m_2}{2}.
\end{eqnarray}

Also there exists $m_3 > 0$ (depending only on $F$, $\widetilde{\mathcal{B}}_1$ and $\mathcal{B}'_1$) such that
\begin{eqnarray}
\label{m'1ineq}
| \phi_{1,1}(0, 0) | =  \frac{1}{2 z_{0,1} } \Big{|} z_{0,1} \frac{\partial^2 G}{\partial u_1^2}(\mathbf{z}_0) + \frac{\partial G}{\partial u_1}(\mathbf{z}_0) \Big{|} \leq \frac{  m_3  }{2 \rho_{\min}}
\end{eqnarray}
and
\begin{eqnarray}
\label{m'1ineq''}
| \phi_{1,2}(0, 0) | =  \Big{|}  \frac12 \cdot \frac{\partial^2 G}{\partial u_1 \partial u_2}(\mathbf{z}_0) \Big{|}
< \frac{m_3}{2}.
\end{eqnarray}
Since
\begin{eqnarray}
\notag
\frac{1}{4 z_{0,1} z_{0,2}} \mathfrak{G}_{1,2} (\mathbf{z}_0, \mathbf{v})
&=&
\frac{1}{4} \left( \frac{\partial^{2} G}{\partial u_1^2 } (\mathbf{z}_0)  - \frac{A_1}{z_{0,1}^2}  \right) \cdot \left( \frac{\partial^{2} G}{\partial u_2^2} (\mathbf{z}_0)
- \frac{A_2}{z_{0,2}^2}
\right)
-
\frac{1}{4} \left( \frac{\partial^{2} G}{\partial{u_1} \partial{u_2}} (\mathbf{z}_0) \right)^2
\\
\notag
&=& \phi_{1,1}(\mathbf{0}) \phi_{2,2}(\mathbf{0}) - \phi_{1,2}^2(\mathbf{0}),
\end{eqnarray}
it is easy to see that there exists $m_3'>0$ (depending only on $F$, $\widetilde{\mathcal{B}}_1$ and $\mathcal{B}'_1$) such that
\begin{eqnarray}
\label{m'1ineq'}
|\phi_{1,1}(\mathbf{0}) \phi_{2,2}(\mathbf{0}) - \phi_{1,2}^2(\mathbf{0})| < \frac{m_3'}{4 \rho_{min}^2}.
\end{eqnarray}

\begin{rem}
\label{rem2}
We have
\begin{eqnarray}
\notag
&& \Big{|} \phi_{1,1}(\mathbf{u}) \phi_{2,2}(\mathbf{u}) - \phi_{1,2}^2(\mathbf{u}) -
\frac{1}{4} \left( \frac{\partial^{2} G}{\partial u_1^2 } (\mathbf{z}_0)  - \frac{A_1}{z_{0,1}^2}  \right) \cdot \left( \frac{\partial^{2} G}{\partial u_2^2} (\mathbf{z}_0)
- \frac{A_2}{z_{0,2}^2}
\right)
+
\frac{1}{4} \left( \frac{\partial^{2} G}{\partial{u_1} \partial{u_2}} (\mathbf{z}_0) \right)^2 \Big{|}
\label{rem1ineq1}
\\
&\leq&
\notag
C_1  \  \max_{1 \leq j \leq 2} |u_j|  \  \  \  ( - \delta_1/2 \leq u_1, u_2 \leq \delta_1/2),
\end{eqnarray}
where $C_1 > 0$ depends on $F$, $\delta_1$, $\lambda_1, \lambda_2$, $\widetilde{\mathcal{B}}_1$, $\mathcal{B}'_1$,
$$
\max_{\substack{\mathbf{z} \in  \widetilde{\mathcal{B}}_1  \\ \mathbf{v} \in \mathcal{B}_1' \\  A_1 \in [- \lambda_1,  \lambda_1] }}  \Big{|} \frac{\partial^{2} G}{\partial u_1^2 } (\mathbf{z})  - \frac{A_1}{z_1^2} \Big{|},
\max_{\substack{\mathbf{z} \in \widetilde{\mathcal{B}}_1 \\ \mathbf{v} \in \mathcal{B}_1' \\  A_2 \in [- \lambda_2, \lambda_2] }}
\Big{|}\frac{\partial^{2} G}{\partial u_2^2} (\mathbf{z})
- \frac{A_2}{z_2^2} \Big{|}  \ \
\text{  and  }  \   \
\max_{\substack{\mathbf{z} \in \widetilde{\mathcal{B}}_1 \\ \mathbf{v} \in \mathcal{B}_1' } }\Big{|} \frac{\partial^{2} G}{\partial{u_1} \partial{u_2}} (\mathbf{z}) \Big{|}.
$$
Similarly, we have
\begin{eqnarray}
\notag
\Big{|} \phi_{1,1}(\mathbf{u}) -
\frac{1}{2} \left( \frac{\partial^{2} G}{\partial u_1^2 } (\mathbf{z}_0)  - \frac{A_1}{z_{0,1}^2}  \right) \Big{|}
\leq
C_2  \  \max_{1 \leq j \leq 2} |u_j|   \  \  \   (- \delta_1/2 \leq u_1, u_2 \leq \delta_1/2)
\end{eqnarray}
and
\begin{eqnarray}
\notag
\Big{|} \phi_{1,2}(\mathbf{u}) -
\frac{1}{2} \left( \frac{\partial^{2} G}{\partial u_1 \partial u_2 } (\mathbf{z}_0) \right) \Big{|}
\leq
C_2  \  \max_{1 \leq j \leq 2} |u_j|   \  \  \   (- \delta_1/2 \leq u_1, u_2 \leq \delta_1/2),
\end{eqnarray}
where $C_2 > 0$ depends on $F$, $\delta_1$, $\lambda_1, \lambda_2$, $\widetilde{\mathcal{B}}_1$ and $\mathcal{B}'_1$.
In particular, both $C_1$ and $C_2$ are independent of the specific choices of $\mathbf{v} \in \mathcal{B}'_1$, $A_1 \in [- \lambda_1, \lambda_1]$, $A_2 \in [- \lambda_2, \lambda_2]$, and the critical point $\mathbf{z}_0 \in \widetilde{\mathcal{B}}_1$.
\end{rem}
It follows from Remark \ref{rem2}, (\ref{m0ineq}), (\ref{m1ineq}) and (\ref{m1ineq'}) that
we can find $\delta_4 > 0$ (in particular satisfying $\delta_4 < \delta_1/2$) and $m_4 > 0$  (both values are independent of the specific choices of $\mathbf{v} \in \mathcal{B}'_1$,  $A_1 \in [- \lambda_1, \lambda_1]$, $A_2 \in [- \lambda_2, \lambda_2]$, and the critical point $\mathbf{z}_0 \in \widetilde{\mathcal{B}}_1$) such that
\begin{eqnarray}
\label{rem1ineq3}
| \phi_{1,1}(\mathbf{u}) \phi_{2,2}(\mathbf{u}) - \phi_{1,2}^2(\mathbf{u}) | >
m_4 \ \ (\mathbf{u} \in [-\delta_4, \delta_4]^2),
\end{eqnarray}
\begin{eqnarray}
\label{rem1ineq4}
| \phi_{1,1}(\mathbf{u}) | > m_4 \ \ (\mathbf{u} \in [-\delta_4, \delta_4]^2) \ \ \text{ and } \ \ | \phi_{1,2}(\mathbf{u}) | > m_4 \ \ (\mathbf{u} \in [-\delta_4, \delta_4]^2).
\end{eqnarray}

Let $\varepsilon_1$ be the sign of $\phi_{1,1}(\mathbf{u}) $ over $\mathbf{u} \in [-\delta_4, \delta_4]^2$
and let $\varepsilon_2$ be the sign of $\phi_{1,1}(\mathbf{u}) \phi_{2,2}(\mathbf{u}) - \phi_{1,2}^2(\mathbf{u})$ over $\mathbf{u} \in [-\delta_4, \delta_4]^2$.
We define a new set of variables $\mathbf{y} = (y_1, y_2)$ by
$$
y_1 = \mathcal{F}_1(u_1, u_2)   \ \ \text{  and  }  \  \  y_2 = \mathcal{F}_2(u_1, u_2),
$$
where
$$
\mathcal{F}_1(u_1, u_2) =  \sqrt{ \varepsilon_1 \phi_{1,1}(\mathbf{u})}  \left( u_1 + \frac{\phi_{1,2}(\mathbf{u})}{\phi_{1,1}(\mathbf{u})} u_2 \right)
$$
and
$$
\mathcal{F}_2(u_1, u_2) = u_2   \sqrt{ \varepsilon_2  \frac{  \phi_{1,1}(\mathbf{u}) \phi_{2,2}(\mathbf{u}) - \phi_{1,2}^2(\mathbf{u})  }{
 \varepsilon_1 \phi_{1,1}(\mathbf{u}) } }.
$$
Let $\mathcal{F} = (\mathcal{F}_1, \mathcal{F}_2)$. It can be verified easily that
$$
\textnormal{Jac}\mathcal{F}(\mathbf{0}) =
\begin{pmatrix}
\sqrt{\varepsilon_1 \phi_{1,1}(\mathbf{0})}  &  \frac{\phi_{1,2}(\mathbf{0} ) \sqrt{\varepsilon_1 \phi_{1,1}(\mathbf{0})}  }{ \phi_{1,1}(\mathbf{0}) }
\\ 0 &
\sqrt{ \varepsilon_2  \frac{  \phi_{1,1}(\mathbf{0}) \phi_{2,2}(\mathbf{0}) - \phi_{1,2}^2(\mathbf{0})   }{  \varepsilon_1  \phi_{1,1}(\mathbf{0}) } }
\end{pmatrix}
$$
and $\det (\textnormal{Jac}\mathcal{F}(\mathbf{0})) \not = 0$.

Let
$$
M =
\frac{1}{8} \sqrt{m_4}
\left( \sqrt{  \frac{m_3}{ 2 \rho_{\min} }} + \frac{m_3  }{ 2 \sqrt{m_4}}  + \frac{\sqrt{m'_3} }{2 \rho_{\min} \sqrt{m_4} } \right)^{-1}.
$$
It then follows from  (\ref{m'1ineq}), (\ref{m'1ineq''}), (\ref{m'1ineq'}) and (\ref{rem1ineq4}) that
$$
0 < M <  \frac{|\det (\textnormal{Jac}\mathcal{F}(\mathbf{0}) )|}{ 2 \cdot 2! \cdot   \max_{1 \leq i, j \leq 2}  | [\textnormal{Jac}\mathcal{F}(\mathbf{0})]_{i,j}  |}.
$$

Let $W = (- \delta_5, \delta_5)^2$ with $\delta_5 > 0$ sufficiently small.
First we make sure $W$ satisfies the three properties of Theorem \ref{thm exp inv} with respect to $\mathcal{F}$.
In the process we also show that $\delta_5$ can be chosen independently of the specific choices of our parameters.

i) It is clear from the definition that $\mathbf{0} \in W$.

ii) Let us prove $\textnormal{det}\left( \textnormal{Jac}\mathcal{F}(\mathbf{u}) \right) \not = 0$ $(\mathbf{u} \in W)$.
We have
\begin{eqnarray}
\notag
[\textnormal{Jac}\mathcal{F}(\mathbf{u})]_{1,1} =
\left( \frac{\partial}{\partial u_1} \sqrt{ \varepsilon_1 \phi_{1,1}} \right)
\left( u_1 + u_2 \frac{\phi_{1,2}}{\phi_{1,1}} \right)
+
\sqrt{\varepsilon_1 \phi_{1,1}} \left( 1 + u_2  \frac{\partial}{\partial u_1} \  \frac{\phi_{1,2}}{\phi_{1,1}}  \right),
\end{eqnarray}
\begin{eqnarray}
\notag
[\textnormal{Jac}\mathcal{F}(\mathbf{u})]_{1,2} =
\left( \frac{\partial}{\partial u_2} \sqrt{ \varepsilon_1 \phi_{1,1}} \right)
\left( u_1 + u_2 \frac{\phi_{1,2}}{\phi_{1,1}} \right)
+
\sqrt{\varepsilon_1 \phi_{1,1}} \left( \frac{\phi_{1,2}}{\phi_{1,1}} + u_2  \frac{\partial}{\partial u_2} \  \frac{\phi_{1,2}}{\phi_{1,1}}  \right),
\end{eqnarray}
\begin{eqnarray}
\notag
[\textnormal{Jac}\mathcal{F}(\mathbf{u})]_{2,1} =
u_2 \frac{\partial}{\partial u_1} \sqrt{   \frac{  \varepsilon_2( \phi_{1,1} \phi_{2,2} - \phi_{1,2}^2 ) }{
\varepsilon_1 \phi_{1,1} } }
\end{eqnarray}
and
\begin{eqnarray}
\notag
[\textnormal{Jac}\mathcal{F}(\mathbf{u})]_{2,2} =
\sqrt{   \frac{  \varepsilon_2( \phi_{1,1} \phi_{2,2} - \phi_{1,2}^2 ) }{
\varepsilon_1 \phi_{1,1} } }
+
u_2 \frac{\partial}{\partial u_2} \sqrt{   \frac{  \varepsilon_2( \phi_{1,1} \phi_{2,2} - \phi_{1,2}^2 ) }{
\varepsilon_1 \phi_{1,1} } }.
\end{eqnarray}
Thus it can be verified that there exists $C_3 > 0$ such that
\begin{eqnarray}
\label{eqn4.17}
\det(\textnormal{Jac}\mathcal{F}(\mathbf{u})) - \det(\textnormal{Jac}\mathcal{F}(\mathbf{0})) \leq  C_3 \max_{1 \leq j \leq 2} |u_j| \ \ \ (\mathbf{u} \in
[- \delta_4, \delta_4]^2  ),
\end{eqnarray}
where $C_3$ is independent of the specific choices of $\mathbf{v} \in \mathcal{B}'_1$,
$A_1 \in [- \lambda_1, \lambda_1]$, $A_2 \in [- \lambda_2, \lambda_2]$, and the critical point $\mathbf{z}_0 \in \widetilde{\mathcal{B}}_1$.
Therefore, it follows from  (\ref{rem1ineq3}) and (\ref{eqn4.17}) that
\begin{eqnarray}
\label{lowerbounddet}
| \det(\textnormal{Jac}\mathcal{F}(\mathbf{u})) | >  \frac{ \sqrt{m_4} }{2} \ \ \ (\mathbf{u} \in
[- \delta_5, \delta_5]^2  )
\end{eqnarray}
for $\delta_5$ sufficiently small.
\begin{rem} \label{rem 4.3}
It can be verified that $C_3 > 0$ depends only on $\delta_4$, $m_4$ (from (\ref{rem1ineq3}) and (\ref{rem1ineq4})),
\begin{eqnarray}
\label{twomax2}
\max_{1 \leq i,j \leq 2}
\max_{\substack{ A_1 \in [- \lambda_1, \lambda_1] \\ A_2 \in [- \lambda_2, \lambda_2] }}
\max_{\substack{\mathbf{u} \in [- \delta_4, \delta_4]^2 \\  \mathbf{v} \in \mathcal{B}'_1  \\ \mathbf{z}_0 \in \widetilde{\mathcal{B}}_1 }}  |\phi_{i,j} (\mathbf{u})|,
\ \
\max_{1 \leq i,j, k \leq 2}
\max_{\substack{A_1 \in [- \lambda_1, \lambda_1] \\ A_2 \in [- \lambda_2, \lambda_2] }}
\max_{\substack{\mathbf{u} \in [- \delta_4, \delta_4]^2 \\  \mathbf{v} \in \mathcal{B}'_1 \\ \mathbf{z}_0 \in \widetilde{\mathcal{B}}_1 }} \Big{|} \frac{\partial \phi_{i,j} }{\partial u_k}  (\mathbf{u}) \Big{|},
\end{eqnarray}
and
\begin{eqnarray}
\label{twomax2'}
\max_{1 \leq i,j,k, \ell \leq 2}
\max_{\substack{ A_1 \in [- \lambda_1, \lambda_1] \\ A_2 \in [- \lambda_2, \lambda_2] }}
\max_{\substack{\mathbf{u} \in [- \delta_4, \delta_4]^2 \\  \mathbf{v} \in \mathcal{B}'_1 \\ \mathbf{z}_0 \in \widetilde{\mathcal{B}}_1 }}  \Big{|} \frac{\partial^2 \phi_{i,j} }{ \partial u_k \partial u_{\ell} }  (\mathbf{u}) \Big{|}.
\end{eqnarray}
Since the definition of the function $\phi_{i,j} (\mathbf{u})$ doesn't require $\mathbf{z}_0$ to be a critical point of
$G(\mathbf{u}) + A_1 \log u_1 + A_2 \log u_2$, in the above expression the maximum over $\mathbf{z}_0$ is taken over all points in $\widetilde{\mathcal{B}}_1$; we used the fact that
$\mathbf{z}_0$ is a critical point to obtain lower bounds, but this is not necessary for upper bounds.
\end{rem}

iii) We now prove
\begin{eqnarray}
\label{4.19'}
\Big{|} \frac{\partial \mathcal{F}_i}{ \partial u_j} (\mathbf{u}) - \frac{\partial \mathcal{F}_i}{ \partial u_j} (\mathbf{0}) \Big{|} < M \ \ \ (\mathbf{u} \in W, \ 1 \leq i, j \leq n).
\end{eqnarray}
Given $\mathbf{u} \in [- \delta_4, \delta_4]^2$ it follows from the mean value theorem that there exists $\mathbf{c}' \in [- \delta_4, \delta_4]^2$
such that
\begin{eqnarray}
\label{meanvalthm1}
\frac{\partial \mathcal{F}_i}{ \partial u_j} (\mathbf{u}) - \frac{\partial \mathcal{F}_i}{ \partial u_j} (\mathbf{0})
=
\nabla  \frac{\partial \mathcal{F}_i}{ \partial u_j} (\mathbf{c}') \cdot \mathbf{u}.
\end{eqnarray}
It can be verified that there exists $C_4 > 0$ which depends only on $\delta_4$, $m_4$ (from (\ref{rem1ineq3}) and (\ref{rem1ineq4})), and
quantities in (\ref{twomax2}) and (\ref{twomax2'})
such that
\begin{eqnarray}
\label{bdd on sec der F}
\max_{ \substack{1 \leq i,j, k \leq 2  }  } \
\sup_{\substack{  \mathbf{v},  A_1, A_2 , \mathbf{z}_0 } } \
\max_{\mathbf{c} \in [- \delta_4, \delta_4]^2 } \Big{|}  \frac{\partial^2 \mathcal{F}_i}{ \partial u_j \partial u_k} (\mathbf{c})  \Big{|} < C_4,
\end{eqnarray}
where the supremum is taken over all $\mathbf{v} \in \mathcal{B}'_1$, $A_1 \in [- \lambda_1, \lambda_1]$,
$A_2 \in [- \lambda_2, \lambda_2]$ (for which $G(\mathbf{u}) + A_1 \log u_1 + A_2 \log u_2$ has a critical point in
$\widetilde{\mathcal{B}}_1$), and the critical point $\mathbf{z}_0 \in \widetilde{\mathcal{B}}_1$.
Thus, we obtain from (\ref{meanvalthm1}) and (\ref{bdd on sec der F}) that
$$
\Big{|} \frac{\partial \mathcal{F}_i}{ \partial u_j} (\mathbf{u}) - \frac{\partial \mathcal{F}_i}{ \partial u_j} (\mathbf{0}) \Big{|}
< 2 C_4 \max_{1 \leq j \leq 2} | u_j | \ \ \ (\mathbf{u} \in [- \delta_4, \delta_4]^2).
$$
Therefore, for $\delta_5$ sufficiently small we have (\ref{4.19'}).
Let us fix a choice of $\delta_5 < \delta_4$ that satisfies i), ii) and iii).
\begin{rem}
The choice of $\delta_5$ depended only on $C_3$, $\delta_4$ and $m_4$ in ii),
and only on $C_4$, $\delta_4$ and $M$ in iii). Therefore,
$\delta_5$ can be chosen independently of the specific choices of
$\mathbf{v} \in \mathcal{B}'_1$, $A_1 \in [- \lambda_1, \lambda_1]$, $A_2 \in [- \lambda_2, \lambda_2]$,
and the critical point $\mathbf{z}_0 \in \widetilde{\mathcal{B}}_1$.
\end{rem}

\begin{claim} \label{claim2} There exists $m_5 > 0$ such that the inequality
$$
m_5 \leq \min_{\mathbf{u} \in \partial W} \| \mathcal{F}(\mathbf{u}) - \mathcal{F}(\mathbf{0}) \|
$$
holds uniformly over all choices of $\mathbf{v} \in \mathcal{B}_1'$, $A_1 \in [- \lambda_1, \lambda_1]$,
$A_2 \in [- \lambda_2, \lambda_2]$ (for which $G(\mathbf{u}) + A_1 \log u_1 + A_2 \log u_2$ has a critical point in
$\widetilde{\mathcal{B}}_1$), and the critical point $\mathbf{z}_0 \in \widetilde{\mathcal{B}}_1$.
\end{claim}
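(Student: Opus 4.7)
The plan is to realize the claimed uniform lower bound as the positive infimum of a jointly continuous function on a compact product space. Let
$$
\mathcal{P} = \mathcal{B}'_1 \times [-\lambda_1, \lambda_1] \times [-\lambda_2, \lambda_2] \times \widetilde{\mathcal{B}}_1,
$$
a compact subset of Euclidean space, and view each $p = (\mathbf{v}, A_1, A_2, \mathbf{z}_0) \in \mathcal{P}$ as a parameter tuple. First I would check that $\mathcal{F}(\mathbf{u}; p)$ is jointly continuous on $\overline{W} \times \mathcal{P}$: each $\phi_{i,j}(\mathbf{u}; p)$ is polynomial in $(\mathbf{u}, \mathbf{v}, \mathbf{z}_0)$ apart from the logarithmic contribution, whose series expansion converges uniformly and absolutely on $\overline{W} \times \mathcal{P}$ because $|u_j|/|z_{0,j}| \leq \delta_1/\rho_{\min} < 1/2$ by (\ref{cond1'onB1}); together with the lower bounds (\ref{rem1ineq3}) and (\ref{rem1ineq4}), which keep the arguments of the square roots in the definition of $\mathcal{F}$ bounded below by a uniform positive constant, this yields joint continuity of $\mathcal{F}$ on $\overline{W} \times \mathcal{P}$.

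Next I would show that $\mathcal{F}(\mathbf{u}; p) \neq \mathcal{F}(\mathbf{0}; p) = \mathbf{0}$ for every $(\mathbf{u}, p) \in \partial W \times \mathcal{P}$. A direct computation gives $\mathcal{F}(\mathbf{0}; p) = \mathbf{0}$. For the nonvanishing on $\partial W$, the second component factorises as
$$
\mathcal{F}_2(\mathbf{u}; p) = u_2 \sqrt{ \varepsilon_2 \bigl( \phi_{1,1}(\mathbf{u}) \phi_{2,2}(\mathbf{u}) - \phi_{1,2}^2(\mathbf{u}) \bigr) / \bigl( \varepsilon_1 \phi_{1,1}(\mathbf{u}) \bigr) },
$$
where the radicand is bounded below by a uniform positive constant on $\overline{W} \times \mathcal{P}$ by (\ref{rem1ineq3}) and (\ref{rem1ineq4}); hence $\mathcal{F}_2 = 0$ forces $u_2 = 0$. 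In that case $\mathcal{F}_1(\mathbf{u}; p) = u_1 \sqrt{\varepsilon_1 \phi_{1,1}(\mathbf{u})}$ again has a strictly positive factor, so $\mathcal{F}_1 = 0$ forces $u_1 = 0$. Since $\mathbf{0} \notin \partial W$, we conclude $\mathcal{F}(\mathbf{u}; p) \neq \mathbf{0}$ throughout $\partial W \times \mathcal{P}$.

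Combining these two ingredients, the function $(\mathbf{u}, p) \mapsto \|\mathcal{F}(\mathbf{u}; p) - \mathcal{F}(\mathbf{0}; p)\|$ is continuous and strictly positive on the compact set $\partial W \times \mathcal{P}$, so it attains a positive infimum, which we take as $m_5$. Since the parameter set described in the claim is contained in $\mathcal{P}$, this $m_5$ furnishes the desired uniform lower bound. The main obstacle in turning this sketch into a rigorous proof will be verifying the joint continuity of $\mathcal{F}$ in $(\mathbf{u}, p)$, in particular the uniform convergence of the logarithmic series over the whole parameter product; once that is in hand everything else is a standard compactness argument.
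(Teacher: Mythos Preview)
Your compactness strategy is sound in spirit, but there is a real gap in the continuity step. You invoke the lower bounds (\ref{rem1ineq3}) and (\ref{rem1ineq4}) to keep the radicands in $\mathcal{F}$ uniformly positive on all of $\overline{W}\times\mathcal{P}$. Those bounds, however, were derived \emph{using} the critical-point equations $z_{0,j}\,\partial G/\partial u_j(\mathbf{z}_0)=-A_j$ (see (\ref{crit pt condn}), (\ref{m0ineq}), (\ref{m1ineq})); for a generic tuple $(\mathbf{v},A_1,A_2,\mathbf{z}_0)\in\mathcal{P}$ they fail. Indeed $\phi_{1,1}(\mathbf{0})=\tfrac12\,\partial^2 G/\partial u_1^2(\mathbf{z}_0)-A_1/(2z_{0,1}^2)$, so the choice $A_1=z_{0,1}^2\,\partial^2 G/\partial u_1^2(\mathbf{z}_0)$ (which lies in $[-\lambda_1,\lambda_1]$ by (\ref{lambdacond2})) forces $\phi_{1,1}(\mathbf{0})=0$, and then $\mathcal{F}_2$ is not even defined. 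Thus $\mathcal{F}$ does not extend continuously to $\overline{W}\times\mathcal{P}$, and the compactness argument as written does not go through.

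The repair is straightforward: work instead on the closed (hence compact) subset $\mathcal{P}^*\subseteq\mathcal{P}$ cut out by the polynomial equations $z_{0,j}\,\partial G/\partial u_j(\mathbf{z}_0)+A_j=0$. On $\mathcal{P}^*$ the bounds (\ref{rem1ineq3}) and (\ref{rem1ineq4}) do hold, and in fact the signs $\varepsilon_1,\varepsilon_2$ are \emph{constant} over $\mathcal{P}^*$ (they agree with the fixed signs of $x_1\,\partial^2 F/\partial x_1^2+\partial F/\partial x_1$ and of $\mathfrak{G}_{1,2}$ on the connected box $\mathcal{B}$), so $\mathcal{F}$ is genuinely jointly continuous there and your argument concludes. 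Alternatively, since $\|\mathcal{F}(\mathbf{u})\|^2=|\phi_{1,1}|\,(u_1+u_2\,\phi_{1,2}/\phi_{1,1})^2+u_2^2\,|\phi_{1,1}\phi_{2,2}-\phi_{1,2}^2|/|\phi_{1,1}|$ does not involve $\varepsilon_1,\varepsilon_2$ at all, you could bypass the sign issue by working with this expression directly on $\mathcal{P}^*$.

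For comparison, the paper avoids compactness entirely and gives an explicit bound edge by edge on $\partial W$: on the sides $u_2=\pm\delta_5$ it bounds $|\mathcal{F}_2|$ below using (\ref{rem1ineq3}), (\ref{rem1ineq4}) and a uniform upper bound $M'$ on $|\phi_{1,1}|+|\phi_{1,2}|$; on the sides $u_1=\pm\delta_5$ it splits according to whether $|u_2|$ exceeds $\delta_5 m_4/(2M')$, bounding $|\mathcal{F}_2|$ in the first case and $|\mathcal{F}_1|$ in the second. This yields an explicit $m_5$ in terms of $\delta_5$, $m_4$, $M'$. Once patched, your route is arguably cleaner; the paper's is more constructive.
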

\begin{proof}
First note $\mathcal{F}(\mathbf{0}) = ( \mathcal{F}_1(0), \mathcal{F}_2(0) ) = \mathbf{0}$.
On the top and bottom edges of the square $\partial W$ (points with $u_2 = \pm \delta_5$), we have
\begin{eqnarray}
|\mathcal{F}_2(u_1, u_2)| >  \delta_5  \frac{ \sqrt{m_4} }{ \sqrt{M'} },
\end{eqnarray}
where
$$
0 < m_4 \leq M' =
\sup_{\substack{  \mathbf{v},  A_1, A_2 , \mathbf{z}_0 } } \
\max_{\mathbf{u} \in \partial W} \
( |\phi_{1,1}(\mathbf{u})| +  |\phi_{1,2}(\mathbf{u})| )
$$
and the supremum is taken over all $\mathbf{v} \in \mathcal{B}'_1$, $A_1 \in [- \lambda_1, \lambda_1]$,
$A_2 \in [- \lambda_2, \lambda_2]$ (for which $G(\mathbf{u}) + A_1 \log u_1 + A_2 \log u_2$ has a critical point in
$\widetilde{\mathcal{B}}_1$), and the critical point $\mathbf{z}_0 \in \widetilde{\mathcal{B}}_1$.
Note it can be easily shown that $M'$ is a bounded quantity by the definitions of $\phi_{1,1}$ and $\phi_{1,2}$.

We now treat the left and right edges of the square $\partial W$ (points with $u_1 = \pm \delta_5$).
Let $\mathbf{u}$ be on the left or the right edge of $\partial W$.
If $|u_2| > \frac{\delta_5  m_4}{2 M'}$
then we have
$$
|\mathcal{F}_2(u_1, u_2)| >  \frac{\delta_5  m_4}{2 M'}  \cdot  \frac{ \sqrt{m_4} }{ \sqrt{M'} } > 0.
$$
On the other hand,  if $|u_2| \leq \frac{\delta_5  m_4}{2 M'}$
then we have
$$
\Big{|} u_1 + \frac{\phi_{1,2}(\mathbf{u})}{\phi_{1,1}(\mathbf{u})} u_2 \Big{|} \geq \frac{ \delta_5 }{2},
$$
and consequently
$$
|\mathcal{F}_1(u_1, u_2)| > \sqrt{m_4}   \frac{ \delta_5 }{2} > 0.
$$
Since $\| \mathcal{F}(\mathbf{u}) \|^2 = |\mathcal{F}_1(\mathbf{u})|^2 + |\mathcal{F}_2(\mathbf{u})|^2$
we see that our claim holds.
\end{proof}

\begin{claim} \label{claim3} There exists $\delta_6 > 0$ such that the inequality
$$
\| \mathcal{F}(\mathbf{u})   \|  < \frac{m_5}{4} \ \ \ (\mathbf{u} \in [- 4 \delta_6, 4 \delta_6]^2)
$$
holds uniformly over all choices of
$\mathbf{v} \in \mathcal{B}'_1$,
$A_1 \in [- \lambda_1, \lambda_1]$, $A_2 \in [- \lambda_2, \lambda_2]$
(for which $G(\mathbf{u}) + A_1 \log u_1 + A_2 \log u_2$ has a critical point in
$\widetilde{\mathcal{B}}_1$),
and the critical point $\mathbf{z}_0 \in \widetilde{\mathcal{B}}_1$.
\end{claim}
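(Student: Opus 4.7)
The plan is to derive the bound from the fact that $\mathcal{F}(\mathbf{0}) = \mathbf{0}$ together with a uniform Lipschitz estimate for $\mathcal{F}$ on a neighbourhood of the origin. First I would record $\mathcal{F}(\mathbf{0}) = \mathbf{0}$ (already noted in the proof of Claim \ref{claim2}) and apply the mean value theorem componentwise: for every $\mathbf{u} \in [-\delta_4, \delta_4]^2$ there exist $\mathbf{c}_1, \mathbf{c}_2$ on the segment joining $\mathbf{0}$ and $\mathbf{u}$ with
\[
|\mathcal{F}_i(\mathbf{u})| = |\mathcal{F}_i(\mathbf{u}) - \mathcal{F}_i(\mathbf{0})| \leq \| \nabla \mathcal{F}_i(\mathbf{c}_i) \| \cdot \| \mathbf{u} \|  \quad (i = 1, 2),
\]
so that the claim will follow once we exhibit a uniform upper bound on the entries of $\textnormal{Jac}\,\mathcal{F}(\mathbf{u})$ over the relevant range of $\mathbf{u}$ and parameters.

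Next I would obtain that uniform bound by inspecting the four explicit expressions for $[\textnormal{Jac}\,\mathcal{F}(\mathbf{u})]_{i,j}$ written out in the verification of step ii) for Theorem \ref{thm exp inv}. Each such entry is an algebraic expression built from $\phi_{1,1}, \phi_{1,2}, \phi_{2,2}$ and their first partials, divided by powers of $\phi_{1,1}$ and of $\phi_{1,1}\phi_{2,2} - \phi_{1,2}^2$. The numerators are uniformly bounded by the quantities displayed in (\ref{twomax2}) and (\ref{twomax2'}), while the denominators are bounded away from $0$ uniformly by (\ref{rem1ineq3}) and (\ref{rem1ineq4}). Consequently there exists $C_5 > 0$, depending only on $\delta_4$, $m_4$ and the quantities in (\ref{twomax2})--(\ref{twomax2'}), hence independent of $\mathbf{v} \in \mathcal{B}'_1$, of $A_j \in [-\lambda_j, \lambda_j]$ and of the critical point $\mathbf{z}_0 \in \widetilde{\mathcal{B}}_1$, with
\[
\max_{1 \leq i, j \leq 2} \ \max_{\mathbf{u} \in [-\delta_4, \delta_4]^2} \big|[\textnormal{Jac}\,\mathcal{F}(\mathbf{u})]_{i,j}\big| < C_5.
\]

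Finally I would pick $\delta_6 > 0$ small enough that $4\delta_6 < \delta_4$ and $16 C_5 \delta_6 < m_5/4$; combining with the mean value estimate gives $\|\mathcal{F}(\mathbf{u})\| \leq \sqrt{2}\, C_5\, \| \mathbf{u} \| < m_5/4$ for all $\mathbf{u} \in [-4\delta_6, 4\delta_6]^2$, uniformly in all parameters, which is exactly the asserted inequality.

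The routine bookkeeping is the only real issue: one must confirm that each factor appearing in the Jacobian entries is controlled by bounds already shown to be uniform in $\mathbf{v}$, $A_1$, $A_2$ and $\mathbf{z}_0$. Since this is the same sort of verification that produced the constants $C_3$ and $C_4$ in the establishment of (\ref{lowerbounddet}) and (\ref{bdd on sec der F}), no genuinely new input is needed, and the choice of $\delta_6$ inherits the required uniformity automatically.
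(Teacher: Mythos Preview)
Your proposal is correct and follows essentially the same approach as the paper: note $\mathcal{F}(\mathbf{0})=\mathbf{0}$, bound the entries of $\textnormal{Jac}\,\mathcal{F}$ uniformly on $[-\delta_4,\delta_4]^2$ by appealing to the same ingredients that gave $C_3$ and $C_4$, apply the mean value theorem, and then take $\delta_6$ small in terms of $C_5$, $\delta_4$ and $m_5$. The only (harmless) slip is the constant in the final displayed inequality, where the correct bound is $\|\mathcal{F}(\mathbf{u})\|\leq 2C_5\|\mathbf{u}\|$ rather than $\sqrt{2}\,C_5\|\mathbf{u}\|$; your choice $16C_5\delta_6<m_5/4$ absorbs this with room to spare.
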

\begin{proof} We have $\mathcal{F}(\mathbf{0}) = (\mathcal{F}_1(\mathbf{0}), \mathcal{F}_2(\mathbf{0})) = \mathbf{0}$.
By a similar calculation as in ii), it can be verified that the absolute value of each entry of the matrix $\textnormal{Jac}\mathcal{F}(\mathbf{u})$
can be bounded (from above) uniformly over $\mathbf{u} \in [- \delta_4, \delta_4]^2$, $\mathbf{v} \in \mathcal{B}'_1$, $A_1 \in [- \lambda_1, \lambda_1]$,
$A_2 \in [- \lambda_2, \lambda_2]$ (for which $G(\mathbf{u}) + A_1 \log u_1 + A_2 \log u_2$ has a critical point in
$\widetilde{\mathcal{B}}_1$),  and the critical point $\mathbf{z}_0 \in \widetilde{\mathcal{B}}_1$.
Thus from the mean value theorem it follows that there exists $C_5 >0$ such that
$$
| \mathcal{F}_i(\mathbf{u})  | \leq C_5  \max_{1 \leq j \leq 2} | u_j | \ \ \  (\mathbf{u} \in [- \delta_4, \delta_4]^2).
$$
Therefore, there exists $\delta_6 > 0$, depending only on $C_5$, $\delta_4$ and $m_5$, such that
$$
\| \mathcal{F}(\mathbf{u})   \|  < \frac{m_5}{4} \ \ \ (\mathbf{u} \in  [- 4 \delta_6, 4 \delta_6]^2).
$$
\end{proof}
Let us fix $\delta_6 > 0$ sufficiently small, in particular $\delta_6 < \frac15 \min \{ \delta_1, \delta_5 \}$.
Let
$$
V = \{ \mathbf{y} \in \mathbb{R}^2 :  \| \mathbf{y} \| < m_5 /2  \}.
$$
Then we have $[- 3\delta_6, 3 \delta_6]^2 \subseteq \mathcal{F}^{-1}(V)$; in fact, by Theorem \ref{thm exp inv} $\mathcal{F}$ is a diffeomorphism from $(- 4 \delta_6, 4 \delta_6)^2$ to an open subset of $V$.
Let $\mathcal{B}_2 = (\mathbf{u}_0 + [-\delta_6, \delta_6]^2)$. We have
$\mathcal{B}_2 \subseteq \widetilde{\mathcal{B}}_1$, and
given any $\mathbf{z} \in \mathcal{B}_2$,
\begin{eqnarray}
\label{delta7}
\mathcal{B}_2 \subseteq (\mathbf{z} + [-3\delta_6, 3\delta_6]^2) \subseteq \mathcal{B}_1
\end{eqnarray}
and
\begin{eqnarray}
\label{bdd set of B2one}
(- \mathbf{z} + \mathcal{B}_2) \subseteq [-3\delta_6, 3\delta_6]^2.
\end{eqnarray}
Therefore, it follows that given any $\mathbf{v} \in \mathcal{B}_1'$, $A_1 \in [- \lambda_1, \lambda_1]$ and $A_2 \in [- \lambda_2, \lambda_2]$,
and if $G(\mathbf{u}) + A_1 \log u_1 + A_2 \log u_2$ has a critical point $\mathbf{z}_0 \in \mathcal{B}_2$, then
$\mathcal{F} |_{- \mathbf{z}_0 + \mathcal{B}_2}$ is a diffeomorphism (It is a restriction of a diffeomorphism defined on $(-4 \delta_6, 4 \delta_6)^2$
to a closed subset $(- \mathbf{z}_0 + \mathcal{B}_2) \subseteq  (-4 \delta_6, 4 \delta_6)^2$.).

\begin{rem}
The main reference for the material in this section was the proof of \cite[Chapter VIII, Proposition 6]{St},
where an estimate is obtained by the stationary phase method. 
\end{rem}

\section{Third Box - Case (I)}
\label{secthirdbox}
Let $A_j \in [- \lambda_j, \lambda_j]$ $(1 \leq j \leq 2)$.
Recall the definition of $\Psi_{\mathbf{v}}$ from Section \ref{secfirstbox}.
Let $\eta_0 > 0$ be such that
\begin{eqnarray}
\label{PhiB2 contain}
(\Psi_{\mathbf{v}_0}(\mathbf{u}_0) + [-7 \eta_0, 7 \eta_0]^2)  \subseteq \Psi_{\mathbf{v}_0}(\mathcal{B}_2).
\end{eqnarray}
Since $\Psi_{\mathbf{v}}(\mathbf{u})$ is a polynomial in $\mathbf{u}$ and $\mathbf{v}$, it is easy to deduce that there exists $\widetilde{\delta} > 0$ such that
\begin{eqnarray}
\label{etabdd1}
\|  \Psi_{\mathbf{v}}(\mathbf{u}) - \Psi_{\mathbf{v}_0}(\mathbf{u}) \|_{\infty} < \eta_0
\end{eqnarray}
for all $\mathbf{u} \in \mathcal{B}_2$ and $\mathbf{v}$ satisfying $\| \mathbf{v} - \mathbf{v}_0 \|_{\infty} < 2 \widetilde{\delta}$;
the choice of $\widetilde{\delta}$ depends only on $F, \mathcal{B}_2, \mathbf{v}_0$ and $\eta_0$.

Recall by Claim \ref{claim1} $\Psi_{\mathbf{v}}$ is a diffeomorphism on $\mathcal{B}_1$  for any $\mathbf{v} \in \mathcal{B}'_1$, and also
$\mathcal{B}_2 \subseteq \mathcal{B}_1$. Let $\mathcal{B}'_2 = (\mathbf{v}_0 + [- \min \{ \widetilde{\delta}, \delta_1\}, \min \{ \widetilde{\delta}, \delta_1\}]^2 )$ and $\mathbf{v} \in \mathcal{B}'_2$.
Let
$$
\mathcal{Y}_0 = ( \Psi_{\mathbf{v}_0}(\mathbf{u}_0) + [-6 \eta_0, 6 \eta_0]^2 )  \subseteq \Psi_{\mathbf{v}_0}(\mathcal{B}_2)
$$
and
$$
\mathcal{Y}_1 = \Psi_{\mathbf{v}}(  \Psi_{\mathbf{v}_0}^{-1}(\mathcal{Y}_0)).
$$
Let $\mathcal{C}$ be the boundary of $\Psi_{\mathbf{v}_0}^{-1}(\mathcal{Y}_0)$, i.e. $\mathcal{C} = \partial \Psi_{\mathbf{v}_0}^{-1}(\mathcal{Y}_0).$
Note since $\Psi_{\mathbf{v}_0}^{-1}(\mathcal{Y}_0) \subseteq \mathcal{B}_2$ (and $\Psi_{\mathbf{v}_0}^{-1}(\mathcal{Y}_0)$ is closed) we have $\mathcal{C} \subseteq \mathcal{B}_2$. By basic properties of diffeomorphisms it follows that $\Psi_{\mathbf{v}_0}$ maps the boundary of $\Psi_{\mathbf{v}_0}^{-1}(\mathcal{Y}_0)$ to the boundary of $\mathcal{Y}_0$, and $\Psi_{\mathbf{v}}$ maps the boundary of $\Psi_{\mathbf{v}_0}^{-1}(\mathcal{Y}_0)$ to the boundary of $\mathcal{Y}_1$; in other words,  $\Psi_{\mathbf{v}_0}(\mathcal{C})  = \partial \mathcal{Y}_0$ and $\Psi_{\mathbf{v}}(\mathcal{C}) = \partial \mathcal{Y}_1$.
Recall diffeomorphims map non-self intersecting loops to non-self intersecting loops.
Since $\Psi_{\mathbf{v}_0}^{-1}$ is a diffeomorphism and $\partial \mathcal{Y}_0$ is a non-self intersecting loop,
we have that $\mathcal{C}$ is also a non-self intersecting loop.
Consequently, $\partial \mathcal{Y}_1$ is a non-self intersecting loop as well.

Take any $\mathbf{s} \in \partial \mathcal{Y}_1$. Then there exists $\mathbf{u} \in \mathcal{C}$ such that
$\mathbf{s} = \Psi_{\mathbf{v}}(\mathbf{u})$. By (\ref{etabdd1}) we have
$$
\| \mathbf{s} - \Psi_{\mathbf{v}_0}(\mathbf{u}) \|_{\infty} = \| \Psi_{\mathbf{v}}( \mathbf{u}) - \Psi_{\mathbf{v}_0}( \mathbf{u})  \|_{\infty} < \eta_0,
$$
and we know $\Psi_{\mathbf{v}_0}( \mathbf{u}) \in \partial \mathcal{Y}_0$.
Therefore, it follows that
\begin{eqnarray}
\label{propY0}
\partial \mathcal{Y}_1 \subseteq \cup_{\mathbf{s}' \in \partial \mathcal{Y}_0 }  \   (\mathbf{s}' + (- \eta_0, \eta_0)^2 )
= (\partial \mathcal{Y}_0) + (- \eta_0, \eta_0)^2.
\end{eqnarray}
Since
$\Psi_{\mathbf{v}_0}^{-1}$ and $\Psi_{\mathbf{v}}$ are diffeomorphisms and
$\mathcal{Y}_0$ is simply connected, $\mathcal{Y}_1$ is also simply connected. Thus we obtain
from the definition of $\mathcal{Y}_0$ and (\ref{propY0}) that
\begin{eqnarray}
\label{3eta0}
(\Psi_{\mathbf{v}_0}(\mathbf{u}_0) + [- 4  \eta_0, 4 \eta_0]^2)  \subseteq \mathcal{Y}_1 \subseteq \Psi_{\mathbf{v}}(\mathcal{B}_2) \ \ \ \ (\mathbf{v} \in \mathcal{B}'_2).
\end{eqnarray}

Let $0 < \delta_7 < \min \{ \widetilde{\delta}, \delta_1\}$ be sufficiently small such that
\begin{eqnarray}
\label{choicedelta7}
\| \Psi_{\mathbf{v}}(\mathbf{u})  -  \Psi_{\mathbf{v}_0}(\mathbf{u}_0)  \|_{\infty} < \eta_0 / 2
\end{eqnarray}
for all $\mathbf{u}$ and $\mathbf{v}$ satisfying $\| \mathbf{u} - \mathbf{u}_0 \|_{\infty} < 2\delta_7$ and
$\| \mathbf{v} - \mathbf{v}_0 \|_{\infty} < 2 \delta_7$ respectively.
Let $\mathcal{B}_3 = (\mathbf{u}_0 + [- \delta_7, \delta_7]^2)$ and
$\mathcal{B}'_3 = (\mathbf{v}_0 + [- \delta_7, \delta_7]^{n-2})$.

Let us fix $\mathbf{v} \in \mathcal{B}'_3$.
Let $\mathfrak{D}_1 = (\Psi_{\mathbf{v}}(\mathbf{u}_0) + [-\eta_0, \eta_0]^2)$ and $\mathfrak{D}_2 = (\Psi_{\mathbf{v}}(\mathbf{u}_0) + [- 3\eta_0, 3\eta_0]^2)$.
Then it follows from (\ref{choicedelta7}) that $\Psi_{\mathbf{v}}(\mathcal{B}_3) \subseteq \mathfrak{D}_1$, and from (\ref{3eta0}) that
$$
\mathfrak{D}_2 \subseteq \Psi_{\mathbf{v}}(\mathcal{B}_2).
$$
Let $\mathbf{w} \in \mathbb{R}^2$. Clearly if $(\mathbf{w} + [-\eta_0, \eta_0]^2) \cap \mathfrak{D}_1 \not = \emptyset$, then
\begin{eqnarray}
\label{wshiftbox}
\left( \mathbf{w} + [-\eta_0, \eta_0]^2 \right) \cup \mathfrak{D}_1 \subseteq \mathfrak{D}_2.
\end{eqnarray}

Suppose $(- A_1, - A_2) \not \in (\mathfrak{D}_1 - [-\eta_0, \eta_0]^2)$. Then
we have
\begin{eqnarray}
\label{star'}
[-\eta_0, \eta_0]^2 \cap \left( (A_1, A_2) + \mathfrak{D}_1  \right)  = \emptyset.
\end{eqnarray}
Thus for any $(s_1,s_2) \in \mathfrak{D}_1$ at least one of  $|s_1 + A_1| > \eta_0$ or $|s_2 + A_2| > \eta_0$ holds.
We now prove either $|s_1 + A_1| > \eta_0$ holds for all $(s_1, s_2) \in \mathfrak{D}_1$
or $|s_2 + A_2| > \eta_0$ holds for all $(s_1, s_2) \in \mathfrak{D}_1$.
Suppose otherwise, in which case there exist $(s_1, s_2) \in \mathfrak{D}_1$ with
$|s_1 + A_1| \leq \eta_0$ and $(s'_1, s'_2) \in \mathfrak{D}_1$ with $|s'_2 + A_2| \leq \eta_0$.
Since $\mathfrak{D}_1$ is a square in $\mathbb{R}^2$ (with sides parallel to the axes)
it follows that $(s_1, s'_2) \in \mathfrak{D}_1$, and this contradicts (\ref{star'}).
Therefore, we have either $|s_1 + A_1| > \eta_0$ holds for all $(s_1, s_2) \in \mathfrak{D}_1$
or $|s_2 + A_2| > \eta_0$ holds for all $(s_1, s_2) \in \mathfrak{D}_1$.
It follows that there exists $1 \leq j_0 \leq 2$ such that 
$$
| \Psi_{\mathbf{v}, j_0}(\mathbf{u}) + A_{j_0}  | = \Big{|} u_{j_0} \frac{\partial G}{\partial u_{j_0} }(\mathbf{u}) + A_{j_0} \Big{|} > \eta_0 \ \ (\mathbf{u} \in \Psi_{\mathbf{v}}^{-1}(\mathfrak{D}_1) ),
$$
and consequently
\begin{eqnarray}
\label{deriv bound1}
\Big{|} \frac{\partial G}{\partial u_{j_0} }(\mathbf{u}) +  \frac{A_{j_0}}{u_{j_0}} \Big{|} \geq \frac{\eta_0}{\rho_{\max}}
\ \ (\mathbf{u} \in \mathcal{B}_3).
\end{eqnarray}
Thus we have a uniform lower bound on at least one of the first partial derivatives of $G(\mathbf{u}) + A_1 \log u_1 + A_2 \log u_2$ in this case.

On the other hand, suppose
$(- A_1, - A_2) \in (\mathfrak{D}_1 - [-\eta_0, \eta_0]^2)$, or equivalently
$(A_1, A_2) \in (- \mathfrak{D}_1 + [-\eta_0, \eta_0]^2)$.
Then we have
$$
\left( (- A_1, - A_2)  + [-\eta_0, \eta_0]^2 \right) \cap  \mathfrak{D}_1   \not = \emptyset,
$$
and consequently from (\ref{wshiftbox}) we obtain
\begin{eqnarray}
\label{A1A1inD2}
(- A_1, - A_2)  \in \left( (- A_1, - A_2)  + [-\eta_0, \eta_0]^2 \right) \cup  \mathfrak{D}_1  \subseteq \mathfrak{D}_2 \subseteq \Psi_{\mathbf{v}}(\mathcal{B}_2).
\end{eqnarray}
Therefore, there exists $\mathbf{z}_0 \in \Psi_{\mathbf{v}}^{-1}(\mathfrak{D}_2) \subseteq \mathcal{B}_2$ such that
$\Psi_{\mathbf{v}}(\mathbf{z}_0) = (-A_1, -A_2)$, in other words $\mathbf{z}_0$ is a critical point of $G(\mathbf{u}) + A_1 \log u_1 + A_2 \log u_2$
(see Remark \ref{rem1}).

\section{Estimating the integral - Case (I)}
In this section we will make use of the following lemma, which is a slight variant of \cite[pp. 332, Proposition 2]{St}.
\begin{lem}
\label{firstderivbdd}
Let $\kappa > 0$ be sufficiently small. Suppose $f: \mathbb{R} \rightarrow \mathbb{R}$ is twice differentiable on the interval $(a - \kappa, b + \kappa)$
and $|f'(t)| \geq \mathfrak{c}_1 > 0$ for all $t \in [a,b]$. Let $\psi: \mathbb{R} \rightarrow \mathbb{R}$ be a smooth
function with its support contained in $[a + \kappa, b - \kappa]$. We have the following estimates for two separate cases.

\textnormal{i)} Suppose $|f'' (t)| \leq \mathfrak{c}_2$ for all $t \in [a,b]$.
Then
$$
\Big{|}  \int_a^b \psi(t) e^{i \tau f(t) } dt  \Big{|}  \leq \frac{1}{|\tau|} \left( \frac{2(b-a)}{\mathfrak{c}_1} +  (b - a)^2 \frac{\mathfrak{c}_2}{\mathfrak{c}_1^2} \right) \max_{x \in (a,b) } | \psi'(x) |.
$$

\textnormal{ii)} Suppose $f''$ is continuous on $(a - \kappa, b + \kappa )$ and  $|f''(t)|>0$ on $[a,b]$. Then
$$
\Big{|}  \int_a^b \psi(t) e^{i \tau f(t) } dt  \Big{|}  \leq \frac{1}{|\tau|} \left( \frac{4(b-a)}{\mathfrak{c}_1} \right) \max_{x \in (a,b) } | \psi'(x) |.
$$
\end{lem}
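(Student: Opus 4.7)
The natural approach is integration by parts, exploiting the identity
$$
e^{i\tau f(t)} \;=\; \frac{1}{i\tau f'(t)}\,\frac{d}{dt} e^{i\tau f(t)},
$$
which is valid on $[a,b]$ by the hypothesis $|f'(t)| \geq \mathfrak{c}_1 > 0$. Integrating by parts and using that $\psi$ is supported in $[a+\kappa, b-\kappa]$ (so that the boundary terms vanish), the plan is to write
$$
\int_a^b \psi(t)\, e^{i\tau f(t)}\, dt \;=\; -\frac{1}{i\tau}\int_a^b e^{i\tau f(t)} \left( \frac{\psi'(t)}{f'(t)} \;-\; \frac{\psi(t)\, f''(t)}{f'(t)^2} \right) dt,
$$
so that taking absolute values reduces the problem to controlling
$$
I_1 \;=\; \int_a^b \frac{|\psi'(t)|}{|f'(t)|}\, dt \qquad \text{and} \qquad I_2 \;=\; \int_a^b \frac{|\psi(t)|\,|f''(t)|}{|f'(t)|^2}\, dt.
$$
The bound $I_1 \leq (b-a)\mathfrak{c}_1^{-1} \max|\psi'|$ is immediate. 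Since $\psi$ vanishes at $a+\kappa$ (and outside), the fundamental theorem of calculus gives $\max_{[a,b]} |\psi| \leq (b-a) \max|\psi'|$, which will be used whenever $\max|\psi|$ needs to be converted.

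For part (i), I would simply insert the upper bound $|f''|\leq \mathfrak{c}_2$ and lower bound $|f'|\geq \mathfrak{c}_1$ into $I_2$, and then apply the $\max|\psi| \leq (b-a)\max|\psi'|$ estimate; this yields $I_2 \leq (b-a)^2 \mathfrak{c}_2 \mathfrak{c}_1^{-2} \max|\psi'|$ and combining with $I_1$ gives the stated bound (the factor $2$ just accommodates the $I_1$ term).

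For part (ii), the nontrivial idea is that the continuity of $f''$ on $(a-\kappa,b+\kappa)$ together with $|f''|>0$ on $[a,b]$ forces $f''$ to have a constant sign on $[a,b]$ (by the intermediate value theorem), hence $f'$ is monotonic and non-vanishing there, so $1/f'$ is monotonic as well. This allows one to bound $I_2$ without any quantitative control on $f''$:
$$
\int_a^b \frac{|f''(t)|}{f'(t)^2}\, dt \;=\; \int_a^b \Bigl|\tfrac{d}{dt}\bigl(1/f'(t)\bigr)\Bigr|\, dt \;=\; \Bigl| \tfrac{1}{f'(b)} - \tfrac{1}{f'(a)} \Bigr| \;\leq\; \frac{2}{\mathfrak{c}_1},
$$
and then $I_2 \leq 2\mathfrak{c}_1^{-1}\max|\psi| \leq 2(b-a)\mathfrak{c}_1^{-1}\max|\psi'|$, which combined with $I_1$ yields the claimed constant. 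The only delicate point, and really the sole obstacle, is justifying the sign-constancy of $f''$ near the endpoints, which is where the buffer $\kappa$ and the hypothesis that $f$ is twice differentiable on the slightly larger interval $(a-\kappa, b+\kappa)$ are used.
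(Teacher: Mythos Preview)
Your proof is correct and uses the same core ingredients as the paper: the identity $e^{i\tau f}=(i\tau f')^{-1}\tfrac{d}{dt}e^{i\tau f}$, the lower bound $|f'|\geq\mathfrak{c}_1$, and (for part ii) the observation that $f''$ has constant sign so that $\int_a^b|\tfrac{d}{dt}(1/f')|\,dt=|1/f'(b)-1/f'(a)|\leq 2/\mathfrak{c}_1$. The organization, however, differs. The paper first integrates by parts against $\psi$ to write
\[
\Big|\int_a^b\psi\,e^{i\tau f}\Big|\leq (b-a)\max|\psi'|\cdot\max_{a\leq x\leq b}\Big|\int_a^x e^{i\tau f(t)}\,dt\Big|,
\]
and then bounds the inner partial integral by a second integration by parts using the $1/(i\tau f')$ identity; this modularly separates the weight from the oscillatory estimate. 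You instead apply the $1/(i\tau f')$ identity directly with $\psi$ still present, which produces the terms $\psi'/f'$ and $\psi f''/f'^2$ in one step and then requires the auxiliary bound $\max|\psi|\leq(b-a)\max|\psi'|$ to finish. Your route is slightly more direct and in fact yields marginally sharper constants (you get $1$ and $3$ in place of the paper's $2$ and $4$); the paper's route has the virtue of isolating a clean van der Corput-type bound on $\int_a^x e^{i\tau f}$ that could be reused elsewhere. Either way, the substance is the same.
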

\begin{proof}
First by integration by parts
\begin{eqnarray}
\label{firstineq}
\Big{|} \int_a^b \psi(t) e^{i \tau f(t) } dt \Big{|} &=& \Big{|} \int_a^b  \psi'(x) \int_a^x e^{i \tau f(t) } dt dx \Big{|}
\\
\notag
&\leq& (b-a) \max_{x \in (a,b) } | \psi'(x) | \cdot \max_{a \leq x \leq b}  \Big{|} \int_a^x e^{i \tau f(t) } dt \Big{|}.
\end{eqnarray}
We have
\begin{eqnarray}
\int_a^x e^{i \tau f(t) } dt &=& \int_a^x \frac{1}{i \tau f'(t)} \left(  \frac{d}{dt} e^{i \tau f(t) } \right) dt
\label{secondineq}
\\
\notag
&=&
\frac{1}{i \tau f'(t)}  e^{i \tau f(t) } \Big{|}^x_a - \frac{1}{i \tau}  \int_a^x e^{i \tau f(t) }  \left( \frac{d}{dt}  \frac{1}{f'(t)} \right)  dt.
\end{eqnarray}
For any $a \leq x \leq b$,
$$
\Big{|} \frac{1}{i \tau f'(x)}  e^{i \tau f(x) } - \frac{1}{i \tau f'(a)}  e^{i \tau f(a) } \Big{|} \leq \frac{2}{|\tau| \mathfrak{c}_1}.
$$
Under the assumption of i), we have
\begin{eqnarray}
\label{thirdineq}
\Big{|} - \frac{1}{i \tau} \int_a^x e^{i \tau f(t) }  \left( \frac{d}{dt}  \frac{1}{f'(t)} \right)  dt \Big{|}
=
\Big{|} \frac{1}{i \tau} \int_a^x e^{i \tau f(t) }  \frac{ f''(t)}{ (f'(t))^2}  dt \Big{|} \leq (b - a) \frac{\mathfrak{c}_2}{\mathfrak{c}_1^2 |\tau|}.
\end{eqnarray}
By combining (\ref{firstineq}) and (\ref{secondineq}) with these estimates, we obtain the estimate in i). To obtain the estimate in  ii),
instead of (\ref{thirdineq}) we simply use the following
\begin{eqnarray}
\label{forthineq}
\Big{|} - \frac{1}{i \tau} \int_a^x e^{i \tau f(t) }  \left( \frac{d}{dt}  \frac{1}{f'(t)} \right)  dt \Big{|}
&\leq&
\frac{1}{|\tau|}   \int_a^x  \Big{|} \frac{d}{dt}  \frac{1}{f'(t)}  \Big{|}  dt
\\
&=& \frac{1}{|\tau|}   \Big{|}  \int_a^x  \left( \frac{d}{dt}  \frac{1}{f'(t)}  \right)  dt  \Big{|}
\notag
\\
&=& \frac{1}{|\tau|}   \Big{|} \frac{1}{f'(x)} -  \frac{1}{f'(a)}  \Big{|}
\notag
\\
&\leq& \frac{2}{ \mathfrak{c}_1  |\tau|};
\notag
\end{eqnarray}
we used the fact that $f''$ is continuous and monotone to deduce
$$
\int_a^x  \Big{|} \frac{d}{dt}  \frac{1}{f'(t)}  \Big{|}  dt = \Big{|}  \int_a^x  \left( \frac{d}{dt}  \frac{1}{f'(t)}  \right)  dt  \Big{|}.
$$
\end{proof}

Let $\varpi$ be as in the statement of Proposition \ref{mainprop}, where we choose
$\delta_0$ to satisfy
\begin{eqnarray}
\label{cond on delta0}
\delta_0 < \delta_7
\end{eqnarray}
so that the support of $\varpi$ is contained in the interior of $\mathcal{B}_3 \times \mathcal{B}'_3$ (Recall $\mathcal{B}_3$ and $\mathcal{B}'_3$
were defined in the sentence after (\ref{choicedelta7}).).
We begin by bounding the integral (\ref{mainintegral2}) by
\begin{eqnarray}
\label{mainint1'}
&& \Big{|} \int_{0}^{\infty} \cdots \int_{0}^{\infty} \varpi(\mathbf{x}) x_1^{i t_1} \cdots x_n^{i t_n} e \left( \tau F(\mathbf{x})\right) d\mathbf{x} \Big{|}
\\
\notag
&\ll&
\int_{\mathcal{B}'_3}  \Big{|} \int_{0}^{\infty} \int_{0}^{\infty}  \varpi_{\mathbf{v}}(\mathbf{u}) u_1^{i t_1} u_2^{i t_2} e \left( \tau F(u_1, u_2, \mathbf{v}) \right) d\mathbf{u} \Big{|}
d\mathbf{v},
\end{eqnarray}
where $\varpi_{\mathbf{v}}(\mathbf{u}) = \varpi(\mathbf{u}, \mathbf{v})$.
We now bound the inner integral for each fixed $\mathbf{v} \in \mathcal{B}'_3$.

Recall all the work in Sections \ref{secsecondbox} and \ref{secthirdbox} are under the assumption
$(A_1, A_2) \in [- \lambda_1, \lambda_1] \times [- \lambda_2, \lambda_2]$.
We begin by taking care of the case $(A_1, A_2) \not \in [- \lambda_1, \lambda_1] \times [- \lambda_2, \lambda_2]$.
Without loss of generality suppose $|A_1| > \lambda_1$.
In this case it follows from (\ref{lambdacond2}) that the inequalities
$$
\Big{|} \frac{\partial G}{\partial u_1} (u_1, u_2) + \frac{A_1}{u_1} \Big{|} > \frac{\lambda_1}{2 u_1} \geq \frac{\lambda_1}{2 \rho_{\max}} > 0
$$
and
$$
\Big{|} \frac{\partial^2 G}{\partial u_1^2} (u_1, u_2) - \frac{A_1}{u_1^2} \Big{|} > \frac{\lambda_1}{2 u_1^2} \geq \frac{\lambda_1}{2 \rho_{\max}^2} > 0
$$
hold for all $\mathbf{u} \in \mathcal{B}_3$.
Clearly $(\frac{\partial^2 G}{\partial u_1^2} (u_1, u_2) - \frac{A_1}{u_1^2})$ is continuous on $\mathbf{u} \in \mathcal{B}_3$. Therefore, we obtain from
ii) of Lemma \ref{firstderivbdd} that
\begin{eqnarray}
&&
\label{first bound'2}
\Big{|} \int_{0}^{\infty} \int_{0}^{\infty}  \varpi_{\mathbf{v}}(\mathbf{u}) u_1^{i t_1} u_2^{i t_2} e \left( \tau F(u_1, u_2, \mathbf{v}) \right) d\mathbf{u} \Big{|}
\\
&\leq&
\int_{0}^{\infty} \Big{|}  \int_{0}^{\infty}  \varpi_{\mathbf{v}}(u_1, u_2)  e^{  i 2 \pi \tau \left( G (u_1, u_2)  + A_1 \log u_1 \right) }
d u_1 \Big{|}
d u_2
\notag
\\
\notag
&\ll&
|\tau|^{-1}.
\end{eqnarray}

Now we take care of the case $(A_1, A_2) \in [- \lambda_1, \lambda_1] \times [- \lambda_2, \lambda_2]$.
First suppose $(- A_1, - A_2) \not \in (\mathfrak{D}_1 - [-\eta_0, \eta_0]^2)$, in which case we have that the inequality (\ref{deriv bound1})
holds for all $\mathbf{u} \in \mathcal{B}_3$.
It also follows from (\ref{lambdacond2}) that
$$
\Big{|} \frac{\partial^2 G}{\partial u_{j_0}^2} (u_1, u_2) - \frac{A_{j_0}}{u_{j_0}^2} \Big{|}
\leq \frac{\lambda_{j_0}}{2 \rho_{\min}^2} + \frac{\lambda_{j_0}}{ \rho_{\min}^2 }
$$
holds for all $\mathbf{u} \in \mathcal{B}_3$. If $j_0 = 1$ then we obtain from i) of Lemma \ref{firstderivbdd}
\begin{eqnarray}
&&
\label{first bound'}
\Big{|} \int_{0}^{\infty} \int_{0}^{\infty}  \varpi_{\mathbf{v}}(\mathbf{u}) u_1^{i t_1} u_2^{i t_2} e \left( \tau F(u_1, u_2, \mathbf{v}) \right) d\mathbf{u} \Big{|}
\\
&\leq&
\int_{0}^{\infty} \Big{|}  \int_{0}^{\infty}  \varpi_{\mathbf{v}}(u_1, u_2)  e^{  i 2 \pi \tau \left( G (u_1, u_2)  + A_1 \log u_1 \right) } d u_1 \Big{|}
d u_2
\notag
\\
\notag
&\ll&
|\tau|^{-1}.
\end{eqnarray}
We obtain the same upper bound when $j_0=2$ as well.

Finally, suppose  $(- A_1, - A_2) \in (\mathfrak{D}_1 - [-\eta_0, \eta_0]^2)$, in which case as explained in the sentence after
(\ref{A1A1inD2}) we have that $G(\mathbf{u}) + A_1 \log u_1 + A_2 \log u_2$ has a critical point $\mathbf{z}_0 \in \mathcal{B}_2$.
Then using the notations as in Section \ref{secsecondbox}, we have
\begin{eqnarray}
\label{int2-1'}
&& \Big{|} \int_{0}^{\infty} \int_{0}^{\infty}  \varpi_{\mathbf{v}}(\mathbf{u}) u_1^{i t_1} u_2^{i t_2} e \left( \tau F(u_1, u_2, \mathbf{v}) \right) d\mathbf{u} \Big{|}
\\
&=&
\Big{|} \int_{0}^{\infty}   \int_{0}^{\infty}  \varpi_{\mathbf{v}}(\mathbf{u})  e^{  i 2 \pi \tau \left( G (u_1, u_2)  + A_1 \log u_1 + A_2 \log u_2 \right) } d \mathbf{u} \Big{|}
\notag
\\
&=&
\Big{|} \int_{- \mathbf{z}_0 + \mathcal{B}_3 }  \varpi_{\mathbf{v}}(\mathbf{u} + \mathbf{z}_0)
 e^{  i 2 \pi \tau  \phi (\mathbf{u})  }  d \mathbf{u} \Big{|}.
\notag
\end{eqnarray}
Since $\mathcal{B}_3 \subseteq \mathcal{B}_2$, we know from Section \ref{secsecondbox}
(see the sentence after (\ref{bdd set of B2one})) that
$\mathcal{F}$ is a diffeomorpshism on $(- \mathbf{z}_0 + \mathcal{B}_3)$.
Let $\pi_j: \mathbb{R}^2 \rightarrow \mathbb{R}$ denote the projection on to the $j$-th coordinate.
Then from the definitions of $\phi_{i,j}$, $\mathcal{F}_1$ and $\mathcal{F}_2$, we have
\begin{eqnarray}
\label{eqnsofphi}
\phi(\mathbf{u}) &=& \varepsilon_1 \pi_1^2( \mathcal{F} (\mathbf{u}) ) + \varepsilon_1  \varepsilon_2 \pi^2_2( \mathcal{F} (\mathbf{u}) )
\\
\notag
&=& \varepsilon_1 \pi_1^2( \mathcal{F} (\mathcal{F}^{-1}(\mathbf{y})) )
+ \varepsilon_1 \varepsilon_2 \pi^2_2( \mathcal{F} ( \mathcal{F}^{-1} (\mathbf{y}) ))
\\
\notag
&=& \varepsilon_1 y_1^2 + \varepsilon_1 \varepsilon_2 y_2^2.
\end{eqnarray}
Therefore, we obtain
\begin{eqnarray}
&&  \int_{- \mathbf{z}_0 + \mathcal{B}_3 }  \varpi_{\mathbf{v}}(\mathbf{u} + \mathbf{z}_0)
 e^{  i 2 \pi \tau  \phi (\mathbf{u})  } d \mathbf{u}
\label{int2-2'}
\\
&=& \int_{ \mathcal{F}^{-1}  \mathcal{F}( - \mathbf{z}_0 + \mathcal{B}_3)  }  \varpi_{\mathbf{v}}(\mathbf{u} + \mathbf{z}_0)
 e^{  i 2 \pi \tau  \left( \varepsilon_1 \pi_1^2( \mathcal{F} (\mathbf{u}) ) + \varepsilon_1 \varepsilon_2 \pi^2_2( \mathcal{F} (\mathbf{u}) )   \right)  } d \mathbf{u}
\notag
\\
&=&
\notag
 \int_{\mathcal{F}(   - \mathbf{z}_0 + \mathcal{B}_3   ) }
\varpi_{\mathbf{v}}(\mathcal{F}^{-1}(\mathbf{y}) + \mathbf{z}_0) \
e^{  i 2 \pi \tau  \left( \varepsilon_1 \pi_1^2( \mathcal{F} (\mathcal{F}^{-1} (\mathbf{y}) ) ) + \varepsilon_1 \varepsilon_2 \pi^2_2( \mathcal{F} (\mathcal{F}^{-1} (\mathbf{y}) )   \right)  }
\  | \textnormal{det}( \textnormal{Jac}\mathcal{F}^{-1} (\mathbf{y})) |  \ d \mathbf{y}
\\
&=&
\notag
\int_{\mathcal{F}(   - \mathbf{z}_0 + \mathcal{B}_3   ) }
\varpi_{\mathbf{v}}(\mathcal{F}^{-1}(\mathbf{y}) + \mathbf{z}_0) \  e^{ i 2 \pi \tau  \left( \varepsilon_1 y_1^2 + \varepsilon_1 \varepsilon_2 y_2^2 \right) } \  | \textnormal{det}( \textnormal{Jac}\mathcal{F}^{-1} (\mathbf{y})) |  \ d \mathbf{y}.
\end{eqnarray}

We know $(- \mathbf{z}_0 + \mathcal{B}_3) \subseteq [- 3\delta_6, 3\delta_6]^2$ because of (\ref{bdd set of B2one}).
Then from (\ref{rem1ineq4}) and the definitions of $\mathcal{F}$ and $\phi_{i,j}$,
we can find $L> 0$ such that
$$
\mathcal{F}( - \mathbf{z}_0 + \mathcal{B}_3 ) \subseteq [- L/2, L/2] \times [- L/2, L/2]
$$
holds uniformly over all $\mathbf{z}_0$ and $\mathbf{v}$ in consideration.

Let us define
$$
W(y_1, y_2) =
\left\{
    \begin{array}{ll}
         \varpi_{\mathbf{v}}(\mathcal{F}^{-1}(\mathbf{y}) +  \mathbf{z}_0 ) | \textnormal{det}( \textnormal{Jac}\mathcal{F}^{-1} (\mathbf{y})) |
         &\mbox{  if }  \mathbf{y} \in  \mathcal{F}( - \mathbf{z}_0 + \mathcal{B}_3 ), \\
         0
         &\mbox{  otherwise.} \\
    \end{array}
\right.
$$
\begin{claim}\label{claimW}
$W$ is a smooth function satisfying
\begin{eqnarray}
\label{boundonW''}
\Big{|} \frac{\partial^2 W}{\partial y_1 \partial y_2}(y_1, y_2) \Big{|} \ll 1,
\end{eqnarray}
where the implicit constant is independent of the specific choices of $\mathbf{v} \in \mathcal{B}_3'$, $A_1 \in [- \lambda_1, \lambda_1]$,
$A_2 \in [- \lambda_2, \lambda_2]$, and the critical point $\mathbf{z}_0 \in \mathcal{B}_2$.
\end{claim}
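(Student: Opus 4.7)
The plan is to first handle smoothness by a support argument and then bound the mixed partial derivative by the chain and product rules, with the main work being uniform control of the derivatives of $\mathcal{F}^{-1}$ via the inverse function theorem identities.

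For smoothness, I would observe that the support of $\varpi_{\mathbf{v}}(\,\cdot\, + \mathbf{z}_0)$ as a function of $\mathbf{u}$ is contained in $(\mathbf{u}_0 - \mathbf{z}_0) + [-\delta_0, \delta_0]^2$. Since $\mathbf{z}_0 \in \mathcal{B}_2 = \mathbf{u}_0 + [-\delta_6, \delta_6]^2$ and since $\delta_0 < \delta_7$, the translated support lies strictly inside $-\mathbf{z}_0 + \mathcal{B}_3$ (in fact well away from its boundary, by (\ref{delta7}) and (\ref{bdd set of B2one})). Because $\mathcal{F}$ is a diffeomorphism on $(-\mathbf{z}_0 + \mathcal{B}_3) \subseteq (-4\delta_6, 4\delta_6)^2$, it maps this support to a compact set in the interior of $\mathcal{F}(-\mathbf{z}_0 + \mathcal{B}_3)$. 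Thus $W$ is a smooth function on $\mathcal{F}(-\mathbf{z}_0+\mathcal{B}_3)$ that vanishes near the boundary, so extension by $0$ preserves smoothness on all of $\mathbb{R}^2$.

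For the bound, write $W(\mathbf{y}) = \Omega(\mathbf{y}) \cdot D(\mathbf{y})$ where $\Omega(\mathbf{y}) = \varpi_{\mathbf{v}}(\mathcal{F}^{-1}(\mathbf{y}) + \mathbf{z}_0)$ and $D(\mathbf{y}) = |\det \textnormal{Jac}\,\mathcal{F}^{-1}(\mathbf{y})|$ on $\mathcal{F}(-\mathbf{z}_0+\mathcal{B}_3)$. By the product rule $\partial_1 \partial_2 W = (\partial_1\partial_2 \Omega) D + (\partial_1 \Omega)(\partial_2 D) + (\partial_2 \Omega)(\partial_1 D) + \Omega(\partial_1\partial_2 D)$, so it suffices to bound $\Omega, D$ and their first and second derivatives uniformly. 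The first and second derivatives of $\Omega$ are expressible via the chain rule in terms of the first two derivatives of $\varpi$ (bounded by $\mathfrak{C}$ from (\ref{smoothweightbound})) and the first two derivatives of $\mathcal{F}^{-1}$. Similarly $D$ is a polynomial in the entries of $\textnormal{Jac}\,\mathcal{F}^{-1}$, and its derivatives involve up to the second derivatives of $\mathcal{F}^{-1}$.

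The main obstacle, and the core of the argument, is therefore the uniform control of $\mathcal{F}^{-1}$ and its derivatives. The lower bound (\ref{lowerbounddet}) gives $|\det \textnormal{Jac}\,\mathcal{F}(\mathbf{u})| > \sqrt{m_4}/2$ on $[-\delta_5, \delta_5]^2$, while Remark \ref{rem 4.3} supplies uniform upper bounds on the entries of $\textnormal{Jac}\,\mathcal{F}$ and its derivatives. Since $\textnormal{Jac}\,\mathcal{F}^{-1}(\mathbf{y}) = (\textnormal{Jac}\,\mathcal{F}(\mathcal{F}^{-1}(\mathbf{y})))^{-1}$, Cramer's rule gives a uniform upper bound on the entries of $\textnormal{Jac}\,\mathcal{F}^{-1}$. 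Differentiating the identity $\mathcal{F}(\mathcal{F}^{-1}(\mathbf{y})) = \mathbf{y}$ twice yields
$$
\sum_{k} \frac{\partial \mathcal{F}_i}{\partial u_k}(\mathcal{F}^{-1}(\mathbf{y}))\cdot \frac{\partial^2 (\mathcal{F}^{-1})_k}{\partial y_j \partial y_m}(\mathbf{y})
= - \sum_{k,\ell}\frac{\partial^2 \mathcal{F}_i}{\partial u_k \partial u_\ell}(\mathcal{F}^{-1}(\mathbf{y}))\cdot \frac{\partial (\mathcal{F}^{-1})_k}{\partial y_j}(\mathbf{y})\cdot \frac{\partial (\mathcal{F}^{-1})_\ell}{\partial y_m}(\mathbf{y}),
$$
and inverting the matrix $\textnormal{Jac}\,\mathcal{F}$ on the left expresses the second derivatives of $\mathcal{F}^{-1}$ in terms of quantities already controlled, giving a uniform bound on $|\partial_j \partial_m (\mathcal{F}^{-1})_k|$. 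Plugging these into the chain/product rule expansion of $\partial_1 \partial_2 W$ produces the required $O(1)$ bound with an implicit constant independent of $\mathbf{v} \in \mathcal{B}_3'$, $A_1 \in [-\lambda_1, \lambda_1]$, $A_2 \in [-\lambda_2, \lambda_2]$ and the critical point $\mathbf{z}_0 \in \mathcal{B}_2$.
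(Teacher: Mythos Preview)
Your approach matches the paper's, and the smoothness argument is fine. However, there is an undercount in the derivative bookkeeping. You write that the second derivatives of $D(\mathbf{y}) = |\det \textnormal{Jac}\,\mathcal{F}^{-1}(\mathbf{y})|$ ``involve up to the second derivatives of $\mathcal{F}^{-1}$'', but since $D$ is already a polynomial in the \emph{first} derivatives of $\mathcal{F}^{-1}$, the mixed second derivative $\partial_1\partial_2 D$ (which appears in your product-rule expansion via the term $\Omega\,\partial_1\partial_2 D$) requires \emph{third} derivatives of $\mathcal{F}^{-1}$. The paper's proof makes this explicit: it bounds $\mathcal{G}_1,\mathcal{G}_2$ and their first, second \emph{and third} partial derivatives.

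The repair is routine and fits your framework: differentiate the identity $\mathcal{F}(\mathcal{F}^{-1}(\mathbf{y}))=\mathbf{y}$ once more to express the third derivatives of $\mathcal{F}^{-1}$ in terms of derivatives of $\mathcal{F}$ up to order three (uniformly bounded by the same considerations as in Remark~\ref{rem 4.3}) and the already-controlled lower-order derivatives of $\mathcal{F}^{-1}$, together with the lower bound (\ref{lowerbounddet}) on $|\det\textnormal{Jac}\,\mathcal{F}|$. With that correction your argument is complete and coincides with the paper's.
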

\begin{proof}
First we prove $W$ is smooth. We know that
$\mathcal{F}$ is a diffeomorphism on $(- 4 \delta_6, 4 \delta_6)^2$,
$(- \mathbf{z}_0 + \mathcal{B}_3) \subseteq [- 3 \delta_6, 3 \delta_6]^2$,
and the support of $\varpi_{\mathbf{v}}(\mathcal{F}^{-1}(\mathbf{y}) + \mathbf{z}_0)$ is contained in the interior of
$\mathcal{F}(- \mathbf{z}_0 + \mathcal{B}_3)$.
It is clear that $W$ is smooth on the open set $\mathcal{F}((-4 \delta_6, 4 \delta_6 )^2)$.
For any $\mathbf{u}' \in \mathbb{R}^2 \backslash \mathcal{F}(- \mathbf{z}_0 + \mathcal{B}_3)$ we can find an open set $U$ such that $\mathbf{u}' \in U \subseteq \mathbb{R}^2 \backslash \mathcal{F}(- \mathbf{z}_0 + \mathcal{B}_3)$. Thus we have $W|_{U} \equiv 0$,
and hence smooth on $U$. Since $\mathbb{R}^2 = \mathcal{F}((-4 \delta_6, 4 \delta_6 )^2) \cup (\mathbb{R}^2 \backslash \mathcal{F}(- \mathbf{z}_0 + \mathcal{B}_3))$ we are done.

For the second part of the claim, it suffices to prove the bound (\ref{boundonW''}) for
$\mathbf{y} \in \mathcal{F}((-4 \delta_6, 4 \delta_6 )^2)$ because $W$ is identically $0$ on
$\mathbb{R}^2 \backslash \mathcal{F}(- \mathbf{z}_0 + \mathcal{B}_3)$ as shown above.
Recall $\varpi$ satisfies (\ref{smoothweightbound}).
Since $| \textnormal{det}( \textnormal{Jac}\mathcal{F}^{-1} (\mathbf{y}))| = 1 / |\textnormal{det}( \textnormal{Jac}\mathcal{F}(\mathbf{u}))|$,
where $\mathbf{y} = \mathcal{F}(\mathbf{u})$, and we have (\ref{lowerbounddet}), it follows that $| \textnormal{det}( \textnormal{Jac}\mathcal{F}^{-1} (\mathbf{y}))| \ll 1$ on $\mathbf{y} \in \mathcal{F}((-4 \delta_6, 4 \delta_6 )^2)$.
Let us denote $\mathcal{F}^{-1} = (\mathcal{G}_1, \mathcal{G}_2)$.
In order to achieve (\ref{boundonW''}), by the chain rule we see that all we need now are
upper bounds on (the absolute values of) $\mathcal{G}_1$ and $\mathcal{G}_2$, and also on their first, second and third partial derivatives.
Clearly we have $|\mathcal{G}_j(\mathbf{y})| \leq 4 \delta_6$ $(1 \leq j \leq 2)$ for any $\mathbf{y} \in \mathcal{F}((-4 \delta_6, 4 \delta_6 )^2)$.
In order to obtain upper bounds on the partial derivatives of $\mathcal{G}_1$ and $\mathcal{G}_2$ on $\mathbf{y} \in \mathcal{F}((-4 \delta_6, 4 \delta_6 )^2)$, we use the relation
$$
\textnormal{Jac}\mathcal{F}^{-1} (\mathbf{y}) =  \textnormal{Jac}\mathcal{F}(\mathbf{u})^{-1}
$$
and by the chain rule it suffices to obtain a lower bound for $|\det (\textnormal{Jac}{\mathcal{F}} (\mathbf{u}))|$ on $\mathbf{u} \in (-4 \delta_6, 4 \delta_6)^2$ and also upper bounds for the partial derivatives of $\mathcal{F}_1$ and $\mathcal{F}_2$ on $\mathbf{u} \in (-4 \delta_6, 4 \delta_6)^2$.
This can be done in a similar manner as in Section \ref{secsecondbox} and we omit the details. As a result we obtain
for any $1 \leq i, j, k , \ell \leq 2$, the inequalities
$$
\Big{|} \frac{ \partial  \mathcal{G}_i }{ \partial y_j  } (y_1, y_2) \Big{|} \ll 1, \ \
\Big{|} \frac{ \partial^2  \mathcal{G}_i }{ \partial y_j \partial y_k  } (y_1, y_2) \Big{|} \ll 1 \ \
\textnormal{  and  } \ \
\Big{|} \frac{ \partial^3  \mathcal{G}_i }{ \partial y_j \partial y_k  \partial y_{\ell}} (y_1, y_2) \Big{|} \ll 1
$$
hold for any $\mathbf{y} \in \mathcal{F}((-4 \delta_6, 4 \delta_6 )^2)$, where the implicit constants in these inequalities are
independent of the specific choices of
$\mathbf{v} \in \mathcal{B}_3'$, $A_1 \in [- \lambda_1, \lambda_1]$,
$A_2 \in [- \lambda_2, \lambda_2]$, and the critical point $\mathbf{z}_0 \in \mathcal{B}_2$.
Thus our claim holds.
\end{proof}
Let
$$
\mathcal{I}_1(y_1) = \int_{-L}^{y_1}  e^{ i 2 \pi \tau  \left( \varepsilon_1 t_1^2 \right)}  d t_1 \ \ \textnormal{  and  }  \  \
\mathcal{I}_2(y_2) = \int_{-L}^{y_2}  e^{ i 2 \pi \tau  \left( \varepsilon_1 \varepsilon_2 t_2^2 \right)}  d t_2.
$$
By applying integration by parts twice, the integral in (\ref{int2-2'}) becomes
\begin{eqnarray}
&&\int_{ \mathcal{F}( - \mathbf{z}_0 + \mathcal{B}_3 ) }  W(y_1, y_2) \ e^{ i 2 \pi \tau  \left( \varepsilon_1 y_1^2 + \varepsilon_1 \varepsilon_2 y_2^2 \right) } \ d \mathbf{y}
\label{int2-3'}
\\
\notag
&=&
\int_{-L}^{L}  e^{ i 2 \pi \tau \varepsilon_1 \varepsilon_2 y_2^2  } \int_{-L}^{L}   e^{ i 2 \pi  \tau   \varepsilon_1 y_1^2  } \  W(y_1, y_2) \  dy_1  dy_2
\\
\notag
&=&
- \int_{-L}^{L}  e^{ i 2 \pi \tau \varepsilon_1 \varepsilon_2 y_2^2  }
\int_{-L}^{L} \frac{\partial W}{\partial y_1}(y_1, y_2) \ \mathcal{I}(y_1) \ dy_1  dy_2
\\
&=&
\notag
\int_{-L}^{L} \mathcal{I}(y_1)
\left( \int_{-L}^{L} \frac{\partial^2 W}{\partial y_1 \partial y_2}(y_1, y_2) \  \mathcal{I}_2(y_2) \  dy_2 \right) d y_1.
\end{eqnarray}
In order to bound this integral we use the following.
\begin{claim}
\label{claimint}
Let $- L \leq y_1, y_2 \leq L$. For each $1 \leq j \leq 2$, we have
$$
| \mathcal{I}_j ( y_j ) | \ll \frac{1}{ |\tau|^{1/2} }.
$$
\end{claim}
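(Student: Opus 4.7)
I will reduce Claim \ref{claimint} to the classical uniform bound on Fresnel integrals. First, rescale by the substitution $s = t_j \sqrt{|\tau|}$ in the definition of $\mathcal{I}_j$. Since $\varepsilon_1,\varepsilon_2 \in \{\pm 1\}$, the exponent $2\pi \tau \varepsilon t^2$ (with the appropriate sign $\varepsilon$) transforms into $2\pi \sigma s^2$ for some $\sigma \in \{\pm 1\}$ depending only on the signs of $\tau$, $\varepsilon_1$ and $\varepsilon_2$, and we obtain
\begin{equation*}
\mathcal{I}_j(y_j) = \frac{1}{\sqrt{|\tau|}} \int_{-L\sqrt{|\tau|}}^{\,y_j\sqrt{|\tau|}} e^{2\pi i \sigma s^2}\, ds.
\end{equation*}
Consequently the claim reduces to the uniform bound $|J(a,b)| \ll 1$ in the endpoints $a \leq b$ and the sign $\sigma \in \{\pm 1\}$, where
\begin{equation*}
J(a,b) = \int_a^b e^{2\pi i \sigma s^2}\, ds.
\end{equation*}

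To prove this uniform bound, split the range at $|s| = 1$. The contribution from $[a,b] \cap [-1,1]$ is trivially bounded by $2$. On each of the remaining pieces (where $|s| \geq 1$), integrate by parts using the identity $e^{2\pi i \sigma s^2} = \frac{1}{4\pi i \sigma s}\, \frac{d}{ds} e^{2\pi i \sigma s^2}$; the boundary terms are bounded by $1/(4\pi|s|) \leq 1/(4\pi)$, while the remaining integral has integrand $\frac{1}{4\pi \sigma s^2} e^{2\pi i \sigma s^2}$, whose modulus is integrable on $\{|s| \geq 1\}$ with total mass at most $1/(2\pi)$. Adding these contributions yields $|J(a,b)| \ll 1$ uniformly, and substituting back gives the claimed bound $|\mathcal{I}_j(y_j)| \ll |\tau|^{-1/2}$.

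There is no real obstacle here: this is the standard Fresnel-type estimate. The only point worth emphasising is that the bound on $J(a,b)$ must be uniform in the endpoints $a,b$ (which range over arbitrarily large intervals as $|\tau| \to \infty$), and this is precisely what the $1/s^2$ decay produced by integration by parts on $\{|s|\geq 1\}$ delivers. Note that Lemma \ref{firstderivbdd} does not apply directly, since the integrand here carries no cutoff function $\psi$ supported strictly inside the domain; the elementary integration-by-parts argument above is the cleanest replacement.
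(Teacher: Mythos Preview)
Your argument is correct. The substitution $s=t_j\sqrt{|\tau|}$ legitimately reduces $\mathcal{I}_j(y_j)$ to $|\tau|^{-1/2}$ times a Fresnel integral $J(a,b)=\int_a^b e^{2\pi i\sigma s^2}\,ds$ with $\sigma\in\{\pm1\}$, and your splitting at $|s|=1$ together with one integration by parts on $\{|s|\ge1\}$ gives the required uniform bound $|J(a,b)|\ll 1$. All boundary terms and the remaining integral are controlled exactly as you say.

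However, the paper takes a different route: it squares $\mathcal{I}_j(y_j)$, writes the square as a double integral over a rectangle in $\mathbb{R}^2$, and then passes to polar coordinates. The radial integral $\int_{\ell_1(\theta)}^{\ell_2(\theta)} e^{i2\pi\varepsilon'_j\tau r^2}\,r\,dr$ evaluates exactly (since $r\,dr=\tfrac12\,d(r^2)$), yielding $|\mathcal{I}_j(y_j)|^2\ll|\tau|^{-1}$ and hence the claim. This squaring-and-polar trick exploits the quadratic nature of the phase very cleanly and avoids any splitting or integration by parts; your argument, by contrast, is essentially the first van der Corput lemma specialised to a quadratic phase, and would generalise immediately to any phase with a single nondegenerate critical point. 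Both proofs are short and standard; the paper's is perhaps more self-contained, while yours is more flexible.
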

\begin{proof}
Let $\varepsilon'_1 = \varepsilon_1$ and $\varepsilon'_2 = \varepsilon_1 \varepsilon_2$. Then we have
\begin{eqnarray}
\mathcal{I}_j ( y_j )^2
=
\int_{-L}^{y_j} \int_{-L}^{y_j}  e^{ i 2 \pi \varepsilon'_j \tau \left( s_j^2 + t_j^2 \right)} d s_j d t_j
= \int_0^{2 \pi} \int_{\ell_1(\theta)}^{\ell_2(\theta)}
e^{i 2 \pi \varepsilon'_j \tau  r^2  } r dr d\theta
\end{eqnarray}
for appropriate functions $\ell_1(\theta)$ and $\ell_2(\theta)$ satisfying $0 \leq \ell_1(\theta) \leq \ell_2(\theta) \leq \sqrt{2 L^2}$.
The inner integral becomes
$$
\Big{|} \int_{\ell_1(\theta)}^{\ell_2(\theta)} e^{ i 2 \pi \varepsilon'_j \tau  r^2  } r dr \Big{|}
=
\Big{|} \frac{1}{i 4 \pi \varepsilon'_j \tau }  \left( e^{ i 2 \pi \varepsilon'_j \tau  \ell_2(\theta)^2  }  - e^{ i 2 \pi \varepsilon'_j \tau  \ell_1(\theta)^2  } \right) \Big{|}
\ll
\frac{1}{|\tau|},
$$
and the claim follows immediately.
\end{proof}
It follows from (\ref{int2-1'}), (\ref{int2-2'}), (\ref{int2-3'}), and Claims \ref{claimW} and \ref{claimint} that
\begin{eqnarray}
\label{second bound'}
\Big{|} \int_{0}^{\infty} \int_{0}^{\infty}  \varpi_{\mathbf{v}}(\mathbf{u}) u_1^{i t_1} u_2^{i t_2} e \left( \tau F(u_1, u_2, \mathbf{v}) \right) d\mathbf{u} \Big{|}
&\ll&
\int_{-L}^{L} | \mathcal{I}(y_1) |
\int_{-L}^{L} | \mathcal{I}_2(y_2) |  \ dy_2  d y_1
\\
\notag
&\ll& \frac{1}{|\tau|}.
\end{eqnarray}

Therefore, we obtain from (\ref{mainint1'}), (\ref{first bound'2}), (\ref{first bound'}) and  (\ref{second bound'}) that
$$
\Big{|} \int_{0}^{\infty} \cdots \int_{0}^{\infty} \varpi(\mathbf{x}) x_1^{i t_1} \cdots x_n^{i t_n} e \left( \tau F(\mathbf{x})\right) d\mathbf{x} \Big{|}
\ll \frac{1}{|\tau|},
$$
and this completes the proof of Proposition \ref{mainprop} when $F$ satisfies the hypotheses of Case (I).

\section{Case (II)}
For Case (II) the estimate (\ref{mainintegral2}) can be obtained in a similar manner as in Case (I).
In order to avoid repetition we keep the details to a minimum for this case. Let us suppose $F$ satisfies the hypotheses of Case (II).
Then since $\frac{\partial^2 F}{\partial x_1 \partial x_2} \equiv 0$,
$\mathfrak{G}_{1,2}$ becomes
$$
\mathfrak{G}_{1,2} (\mathbf{x}) = \left( x_1 \frac{\partial^{2} F}{\partial x_1^2 } (\mathbf{x})  + \frac{\partial F}{\partial x_1} (\mathbf{x})   \right) \cdot \left( x_2 \frac{\partial^{2} F}{\partial x_2^2}(\mathbf{x})
+ \frac{\partial F}{\partial x_2}(\mathbf{x})  \right).
$$

\subsection{First Box}
Let $\mathcal{B} = (\mathbf{x}_0 + [- \delta, \delta]^n) \subseteq (0,1)^n$, and we define
\begin{eqnarray}
\label{defm22}
\label{defnm0m12}
m_1 = \min_{1 \leq i \leq 2} \min_{ \mathbf{x} \in  \mathcal{B}} \  \Big{|} x_i \frac{\partial^{2} F}{\partial x_i^2 }(\mathbf{x})  + \frac{\partial F}{\partial x_i} (\mathbf{x}) \Big{|}.
\end{eqnarray}
We let $\rho_{1,1}, \ldots, \rho_{n,1}$, $\rho_{1,2}, \ldots, \rho_{n,2}$, $\rho_{\min}$,
$\rho_{\max}$, $\mathcal{B}_0$ and $\mathcal{B}'_0$ as in (\ref{defnrho1}), (\ref{defnrho2}) and (\ref{defnrho3}).

Let $\mathbf{u} = (u_1, u_2) = (x_1, x_2)$ and $\mathbf{v} = (x_3, \ldots, x_n)$, and also let $\mathbf{u}_0 = (x_{0,1}, x_{0,2})$ and
$\mathbf{v}_0 = (x_{0,3}, \cdots, x_{0,n})$ so that $\mathbf{x}_0 = (\mathbf{u}_0, \mathbf{v}_0)$. Let us define
$\Psi_{\mathbf{v}} = (\Psi_{\mathbf{v}, 1}, \Psi_{\mathbf{v}, 2}): \mathbb{R}^2 \rightarrow \mathbb{R}^2$, where
$$
\Psi_{\mathbf{v},1} (\mathbf{u}) = u_1 \frac{\partial F}{\partial u_1} (u_1, u_2, \mathbf{v})
\ \
\text{  and  }
\ \
\Psi_{\mathbf{v},2} (\mathbf{u}) = u_2 \frac{\partial F}{\partial u_2} (u_1, u_2, \mathbf{v}).
$$
Note
\begin{eqnarray}
\label{detJacpsi22}
\det( \textnormal{Jac}\Psi_{\mathbf{v}}(\mathbf{u})) = \mathfrak{G}_{1,2}(\mathbf{u}, \mathbf{v}).
\end{eqnarray}
\begin{claim}
\label{claim1-2}
Let $\delta_1 > 0$, $\mathcal{B}_1 = (\mathbf{u}_0 + (- \delta_1, \delta_1)^2)$ and  $\mathcal{B}'_1 = (\mathbf{v}_0 + [- \delta_1, \delta_1]^{n-2})$.  Then for $\delta_1 > 0$ sufficiently small, $\Psi_{\mathbf{v}}$ is a diffeomorphism on $\mathcal{B}_1 \subseteq \mathcal{B}_0$ for any $\mathbf{v} \in \mathcal{B}'_1$
\end{claim}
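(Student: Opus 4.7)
The plan is to mimic essentially verbatim the proof of Claim \ref{claim1} from Case (I), with only the obvious notational adjustments. The key structural reason that proof went through was: (a) we had $\mathfrak{G}_{1,2}$ bounded below in absolute value on $\mathcal{B}$, which gave a lower bound on $|\det\textnormal{Jac}\Psi_{\mathbf{v}}|$ via (\ref{detJacpsi22}); (b) $F$ is a polynomial, so all higher derivatives of the components of $\Psi_{\mathbf{v}}$ are uniformly bounded on the compact set $\mathcal{B}_0 \times \mathcal{B}_0'$; and (c) a standard compactness argument produced a uniform $m > 0$ on the boundary $\partial W$. All three ingredients still hold under the hypotheses of Case (II)---the vanishing of $\partial^2 F/\partial x_1 \partial x_2$ only simplifies the formula for $\mathfrak{G}_{1,2}$ without affecting its positivity on $\mathcal{B}$ by (\ref{set of condn'}).

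Concretely, I would first set $m_0 = \min_{\mathbf{x}\in\mathcal{B}} |\mathfrak{G}_{1,2}(\mathbf{x})| > 0$ and let $\mathfrak{a}$ be the maximum (over $\mathbf{v} \in \mathcal{B}_0'$ and over $1\le i,j\le 2$) of $|[\textnormal{Jac}\Psi_{\mathbf{v}}(\mathbf{u}_0)]_{i,j}|$; then define $M = m_0/(8\mathfrak{a})$, which by (\ref{detJacpsi22}) satisfies $M < |\det\textnormal{Jac}\Psi_{\mathbf{v}}(\mathbf{u}_0)|/(2\cdot 2!\cdot \mathfrak{a})$ uniformly in $\mathbf{v} \in \mathcal{B}_0'$. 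Next, take the candidate open set $W = (\mathbf{u}_0 + (-\delta',\delta')^2)$ with $\delta' < \delta$ to be chosen small. Properties (i) and (ii) of Theorem \ref{thm exp inv} are immediate from $\mathbf{u}_0 \in W$ and from (\ref{set of condn'}) combined with (\ref{detJacpsi22}). For property (iii), the mean value theorem reduces the required Lipschitz bound to a uniform bound on the second partial derivatives of the components of $\Psi_{\mathbf{v}}$, which exists because these are polynomials in $(\mathbf{u},\mathbf{v})$ evaluated on the compact set $\mathcal{B}_0 \times \mathcal{B}_0'$; choosing $\delta'$ small enough (depending only on $F$, $\mathcal{B}_0$, $\mathcal{B}_0'$) makes the inequality hold.

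With $W$ in hand, I would extend $\Psi_{\mathbf{v}}$ to a map $\Phi$ on $\mathbb{R}^n$ by $\Phi_1 = \Psi_{\mathbf{v},1}$, $\Phi_2 = \Psi_{\mathbf{v},2}$, $\Phi_j = v_j$ for $j\ge 3$; then $\det\textnormal{Jac}\Phi(\mathbf{u}_0,\mathbf{v}_0) = \mathfrak{G}_{1,2}(\mathbf{u}_0,\mathbf{v}_0) \neq 0$, and the continuity of $\|\Psi_{\mathbf{v}}(\mathbf{u})-\Psi_{\mathbf{v}}(\mathbf{u}_0)\|$ on the compact set $\partial W \times (\mathbf{v}_0 + [-\delta',\delta']^{n-2})$ yields, for $\delta'$ sufficiently small,
\[
m = \min_{\mathbf{v} \in \mathbf{v}_0 + [-\delta',\delta']^{n-2}} \min_{\mathbf{u} \in \partial W} \|\Psi_{\mathbf{v}}(\mathbf{u}) - \Psi_{\mathbf{v}}(\mathbf{u}_0)\| > 0.
\]
Finally, defining $V_{\mathbf{v}} = \{\mathbf{y} : \|\mathbf{y} - \Psi_{\mathbf{v}}(\mathbf{u}_0)\| < m/4\}$ and choosing $\delta_1 > 0$ small enough (in particular $\delta_1 < \delta'/2$) so that $\|\Psi_{\mathbf{v}}(\mathbf{u}) - \Psi_{\mathbf{v}}(\mathbf{u}_0)\| < m/5$ whenever $\|\mathbf{u}-\mathbf{u}_0\|_\infty < 2\delta_1$ and $\|\mathbf{v}-\mathbf{v}_0\|_\infty < 2\delta_1$, I get $\mathcal{B}_1 \subseteq \Psi_{\mathbf{v}}^{-1}(V_{\mathbf{v}}) \subseteq W$ for every $\mathbf{v} \in \mathcal{B}_1'$. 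Theorem \ref{thm exp inv} then delivers the diffeomorphism.

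The only step with any subtlety is the uniformity of $m > 0$ and of the choice of $\delta_1$ across all $\mathbf{v} \in \mathcal{B}_1'$---this is what makes the next several sections go through---but as in Case (I) it is handled by a routine compactness argument on $\partial W \times \mathcal{B}_1'$, since $\Psi_{\mathbf{v}}(\mathbf{u}) - \Psi_{\mathbf{v}}(\mathbf{u}_0)$ is polynomial and hence jointly continuous in $(\mathbf{u},\mathbf{v})$. No new ideas beyond Claim \ref{claim1} are needed.
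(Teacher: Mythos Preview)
Your proposal is correct and is exactly what the paper does: it omits the proof with the remark that it is similar to that of Claim~\ref{claim1}, and your write-up faithfully reproduces that argument with the only change being that the lower bound on $|\mathfrak{G}_{1,2}|$ now comes from (\ref{set of condn'}) rather than (\ref{set of condn}).
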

\begin{proof}
We omit the proof as it is similar to that of Claim \ref{claim1}.
\end{proof}
Here we also choose $\delta_1$ to satisfy
\begin{eqnarray}
\label{cond1'onB12}
\delta_1 < \frac{\rho_{\min}}{2}.
\end{eqnarray}
We let
$$
G(\mathbf{u}) = F(u_1, u_2, \mathbf{v}).
$$
Since  $\mathcal{B}_1$ is diffeomorphic to $\Psi_{\mathbf{v}}(\mathcal{B}_1)$, each pair of values
$(\Psi_{\mathbf{v},1}(\mathbf{u}), \Psi_{\mathbf{v},2}(\mathbf{u}))$ gets represented only once over $\mathbf{u} \in \mathcal{B}_1$,
and we also have Remark \ref{rem1}.
Let $\lambda_1$ and $\lambda_2$ be the smallest positive numbers satisfying
\begin{eqnarray}
\label{lambdacond22}
\Big{|} u^2_j \frac{\partial^2 G}{\partial u_j^2} (u_1, u_2)   \Big{|} + \Big{|} u_j \frac{\partial G}{\partial u_j} (u_1, u_2)   \Big{|} \leq \frac{\lambda_j}{2} \ \ (1 \leq j \leq 2)
\end{eqnarray}
for all $\mathbf{u} \in \overline{\mathcal{B}_1}$ and $\mathbf{v} \in {\mathcal{B}'_1}$.

\subsection{Second Box}
Fix $\mathbf{v} \in \mathcal{B}'_1$.
For $\tau \not = 0$, we have
$$
u_1^{i t_1} u_2^{i t_2} e \left( \tau F(u_1, u_2, \mathbf{v}) \right)
= e^{ i 2 \pi \tau \left( G(u_1, u_2) + \frac{t_1}{2 \pi \tau} \log u_1 + \frac{t_2}{2 \pi \tau} \log u_2  \right)}.
$$
Let
$$
A_j = \frac{t_j}{2 \pi \tau} \ \ (1 \leq j \leq 2).
$$
We now deal with the case $A_j \in [- \lambda_j,  \lambda_j]$ $(1 \leq j \leq 2)$.
The critical points of the function $G(\mathbf{u}) + A_1 \log u_1 + A_2 \log u_2$ satisfy
\begin{eqnarray}
\label{crit pt condn2}
\frac{\partial G}{\partial u_j} (u_1, u_2) + \frac{A_j}{u_j} = 0  \ \ \ (1 \leq j \leq 2).
\end{eqnarray}
Suppose $\mathbf{z}_0 = (z_{0,1}, z_{0,2}) \in \mathcal{B}_1$ is a critical point.
Then it follows from our choice of $\mathcal{B}_1$ and $m_1$ (defined in (\ref{defnm0m12})) that
\begin{eqnarray}
\label{m0ineq2}
\Big{|} \left( \frac{\partial^{2} G}{\partial u_1^2 } (\mathbf{z}_0)  - \frac{A_1}{z_{0,1}^2}  \right) \cdot \left( \frac{\partial^{2} G}{\partial u_2^2} (\mathbf{z}_0)
- \frac{A_2}{z_{0,2}^2}
\right) \Big{|}
&=&
\frac{| \mathfrak{G}_{1,2}(\mathbf{z}_0, \mathbf{v} ) | }{ z_{0,1} z_{0,2} }
\\
\notag
&\geq& \frac{m_1^2}{\rho_{\max}^2 }.
\end{eqnarray}

Let $\widetilde{\mathcal{B}}_1 = (\mathbf{u}_0 + [-\delta_1/2, \delta_1/2]^2)$.
Suppose there exists a critical point $\mathbf{z}_0 = (z_{0,1}, z_{0,2}) \in \widetilde{\mathcal{B}}_1$,
in which case we know from Remark \ref{rem1} this is the only critical point in $\widetilde{\mathcal{B}}_1$.
Let us define
\begin{eqnarray}
\phi(\mathbf{u}) &=& G(u_1 + z_{0,1}, u_2 + z_{0,2}) +  A_1 \log (u_1 + z_{0,1}) + A_2 \log (u_2 + z_{0,2})
\\
\notag
&-& G(z_{0,1}, z_{0,2}) - A_1 \log z_{0,1} - A_2 \log z_{0,2}.
\end{eqnarray}
We consider this function over $\mathbf{u} \in (- \mathbf{z}_0 + \widetilde{\mathcal{B}}_1) \subseteq [- \delta_1, \delta_1]^2$.
We have $\phi(\mathbf{0}) = \nabla \phi(\mathbf{0}) = \mathbf{0}$, and $\mathbf{0} \in (- \mathbf{z}_0 + \widetilde{\mathcal{B}}_1)$ is the
only critical point of $\phi(\mathbf{u})$ in $(- \mathbf{z}_0 + \widetilde{\mathcal{B}}_1)$. By the same steps (and the same notations) as before it follows that
$$
\phi(\mathbf{u}) = \sum_{1 \leq j \leq 2} u_j^2 \phi_{j,j}(\mathbf{u}),
$$
where
$$
\phi_{j,j} (\mathbf{u})
=
\sum_{\ell=1}^{d - 2} \frac{H^{(\ell)}_{j,j} (\mathbf{u} ; \mathbf{z}_0 )}{(\ell+1)(\ell+2)} + \frac12 \frac{\partial^2 G}{\partial u_j^2 }(\mathbf{z}_0) -
A_j \sum_{\ell = 2}^{\infty}  \frac{(-1)^{\ell} u_j^{\ell - 2} }{\ell z_{0,j}^{\ell}}.
$$
In particular, it follows from (\ref{defm22}) and (\ref{crit pt condn2}) that
\begin{eqnarray}
\label{m1ineq2}
| \phi_{j,j}(0, 0) | &=&  \Big{|}  \frac12 \cdot \frac{\partial^2 G}{\partial u_j^2}(\mathbf{z}_0) -A_j \frac{1}{2 z_{0,j}^{2}} \Big{|}
\\
\notag
&=& \frac{1}{2 z_{0,j} } \Big{|} z_{0,j} \frac{\partial^2 G}{\partial u_j^2}(\mathbf{z}_0) + \frac{\partial G}{\partial u_j}(\mathbf{z}_0) \Big{|}
\\
\notag
&\geq& \frac{m_1}{2 \rho_{\max}}.
\end{eqnarray}
Also there exists $m_3 > 0$ (depending only on $F$, $\widetilde{\mathcal{B}}_1$ and $\mathcal{B}'_1$) such that
\begin{eqnarray}
\label{m'1ineq2}
| \phi_{j,j}(0, 0) | =  \frac{1}{2 z_{0,j} } \Big{|} z_{0,j} \frac{\partial^2 G}{\partial u_j^2}(\mathbf{z}_0) + \frac{\partial G}{\partial u_j}(\mathbf{z}_0) \Big{|} \leq \frac{  m_3  }{2 \rho_{\min}}   \  \   (1 \leq j \leq 2),
\end{eqnarray}
and therefore
\begin{eqnarray}
\label{m'1ineq'2}
|\phi_{1,1}(\mathbf{0}) \phi_{2,2}(\mathbf{0})| \leq \frac{m_3^2}{ 4 \rho_{\min}^2 }.
\end{eqnarray}
\begin{rem}
\label{rem22}
We have
\begin{eqnarray}
\notag
&& \Big{|} \phi_{1,1}(\mathbf{u}) \phi_{2,2}(\mathbf{u}) -
\frac{1}{4} \left( \frac{\partial^{2} G}{\partial u_1^2 } (\mathbf{z}_0)  - \frac{A_1}{z_{0,1}^2}  \right) \cdot \left( \frac{\partial^{2} G}{\partial u_2^2} (\mathbf{z}_0)
- \frac{A_2}{z_{0,2}^2}
\right) \Big{|}
\label{rem1ineq12}
\\
&\leq&
\notag
C_1  \  \max_{1 \leq j \leq 2} |u_j|  \  \  \  ( - \delta_1/2 \leq u_1, u_2 \leq \delta_1/2),
\end{eqnarray}
and
\begin{eqnarray}
\notag
\Big{|} \phi_{1,1}(\mathbf{u}) -
\frac{1}{2} \left( \frac{\partial^{2} G}{\partial u_1^2 } (\mathbf{z}_0)  - \frac{A_1}{z_{0,1}^2}  \right) \Big{|}
\leq
C_2  \  \max_{1 \leq j \leq 2} |u_j|   \  \  \   (- \delta_1/2 \leq u_1, u_2 \leq \delta_1/2),
\end{eqnarray}
where both $C_1 > 0$ and $C_2 > 0$ are independent of the specific choices of $\mathbf{v} \in \mathcal{B}'_1$,
$A_1 \in [- \lambda_1, \lambda_1]$, $A_2 \in [- \lambda_2, \lambda_2]$, and the critical point $\mathbf{z}_0 \in \widetilde{\mathcal{B}}_1$ (see Remark \ref{rem2}).
\end{rem}
It follows from Remark \ref{rem22} and (\ref{m1ineq2}) that
we can find $\delta_4 > 0$ (in particular satisfying $\delta_4 < \delta_1/2$) and $m_4 > 0$  (both values are independent of the specific choices of $\mathbf{v} \in \mathcal{B}'_1$, $A_1 \in [- \lambda_1, \lambda_1]$, $A_2 \in [- \lambda_2, \lambda_2]$,  and the critical point $\mathbf{z}_0 \in \widetilde{\mathcal{B}}_1$) such that
\begin{eqnarray}
\label{rem1ineq42}
| \phi_{1,1}(\mathbf{u}) | > m_4 \ \ (\mathbf{u} \in [-\delta_4, \delta_4]^2) \ \ \text{ and } \ \ | \phi_{2,2}(\mathbf{u}) | > m_4 \ \ (\mathbf{u} \in [-\delta_4, \delta_4]^2).
\end{eqnarray}

Let $\varepsilon_1$ be the sign of $\phi_{1,1}(\mathbf{u}) $ over $\mathbf{u} \in [-\delta_4, \delta_4]^2$
and let $\varepsilon_2$ be the sign of $\phi_{2,2}(\mathbf{u})$ over $\mathbf{u} \in [-\delta_4, \delta_4]^2$.
We define a new set of variables $\mathbf{y} = (y_1, y_2)$ by
$$
y_1 = \mathcal{F}_1(u_1, u_2)   \  \  \text{  and  }  \  \  y_2 = \mathcal{F}_2(u_1, u_2),
$$
where
$$
\mathcal{F}_1(u_1, u_2) =  u_1 \sqrt{ \varepsilon_1 \phi_{1,1}(\mathbf{u})}
$$
and
$$
\mathcal{F}_2(u_1, u_2) = u_2  \sqrt{ \varepsilon_2 \phi_{2,2}(\mathbf{u})}.
$$
Let $\mathcal{F} = (\mathcal{F}_1, \mathcal{F}_2)$.
It can be verified easily that
$$
\textnormal{Jac}\mathcal{F}(\mathbf{0}) =
\begin{pmatrix}
\sqrt{\varepsilon_1 \phi_{1,1}(\mathbf{0})}  &  0
\\ 0 & \sqrt{\varepsilon_2 \phi_{2,2}(\mathbf{0})}
\end{pmatrix}
$$
and $\det(\textnormal{Jac}\mathcal{F}(\mathbf{0})) \not = 0$.

Let
$$
M =
\frac{m_4}{8}
\left(  \sqrt{  \frac{m_3}{ 2 \rho_{\min} }} \right)^{-1}.
$$
It then follows from (\ref{m'1ineq2}) and (\ref{rem1ineq42}) that
$$
0 < M <  \frac{|\textnormal{Jac}\mathcal{F}(\mathbf{0})|}{ 2 \cdot 2! \cdot   \max_{1 \leq i, j \leq 2}  | [\textnormal{Jac}\mathcal{F}(\mathbf{0})]_{i,j}  |}.
$$

Let $W = (- \delta_5, \delta_5)^2$ with $\delta_5 > 0$ sufficiently small.
In a similar manner as in Section \ref{secsecondbox} we can show $W$ satisfies the three properties of Theorem \ref{thm exp inv} with respect to $\mathcal{F}$, and also that $\delta_5$ can be chosen independently of the specific choices of our parameters.
We also obtain the following.
\begin{claim} There exists $m_5 > 0$ such that the inequality
$$
m_5 \leq \min_{\mathbf{u} \in \partial W} \| \mathcal{F}(\mathbf{u}) - \mathcal{F}(\mathbf{0}) \|
$$
holds uniformly over all choices of $\mathbf{v} \in \mathcal{B}_1'$, $A_1 \in [- \lambda_1, \lambda_1]$,
$A_2 \in [- \lambda_2, \lambda_2]$ (for which $G(\mathbf{u}) + A_1 \log u_1 + A_2 \log u_2$ has a critical point in
$\widetilde{\mathcal{B}}_1$), and the critical point $\mathbf{z}_0 \in \widetilde{\mathcal{B}}_1$.
\end{claim}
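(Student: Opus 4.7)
The proof will be substantially simpler than its Case (I) counterpart because the transformation $\mathcal{F} = (\mathcal{F}_1, \mathcal{F}_2)$ is now ``diagonal'' in a useful sense: each $\mathcal{F}_j$ depends on $u_j$ only through a clean product $u_j \cdot \sqrt{\varepsilon_j \phi_{j,j}(\mathbf{u})}$. My plan is to exploit this by observing that on $\partial W$, at least one of the coordinates $u_1, u_2$ has absolute value exactly $\delta_5$, and then reading off a lower bound for the corresponding $\mathcal{F}_j$.

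More concretely, I would first note that $\mathcal{F}(\mathbf{0}) = \mathbf{0}$, so $\|\mathcal{F}(\mathbf{u}) - \mathcal{F}(\mathbf{0})\| = \|\mathcal{F}(\mathbf{u})\|$. Fix $\mathbf{u} \in \partial W$. Since $W = (-\delta_5, \delta_5)^2$ and $\delta_5$ has already been chosen with $\delta_5 < \delta_4$, we have $\overline{W} \subseteq [-\delta_4, \delta_4]^2$, so inequality (\ref{rem1ineq42}) yields both $\varepsilon_1 \phi_{1,1}(\mathbf{u}) > m_4$ and $\varepsilon_2 \phi_{2,2}(\mathbf{u}) > m_4$ at every point of $\overline{W}$. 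Split into two cases according to which edge of the square contains $\mathbf{u}$.

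If $\mathbf{u}$ lies on the left or right edge of $\partial W$, then $|u_1| = \delta_5$, and directly from the definition of $\mathcal{F}_1$ we get
$$
|\mathcal{F}_1(\mathbf{u})| \;=\; |u_1|\,\sqrt{\varepsilon_1 \phi_{1,1}(\mathbf{u})} \;>\; \delta_5 \sqrt{m_4}.
$$
If instead $\mathbf{u}$ lies on the top or bottom edge, then $|u_2| = \delta_5$, and similarly
$$
|\mathcal{F}_2(\mathbf{u})| \;=\; |u_2|\,\sqrt{\varepsilon_2 \phi_{2,2}(\mathbf{u})} \;>\; \delta_5 \sqrt{m_4}.
$$
In either case $\|\mathcal{F}(\mathbf{u})\|^2 = |\mathcal{F}_1(\mathbf{u})|^2 + |\mathcal{F}_2(\mathbf{u})|^2 > \delta_5^2 m_4$, so setting $m_5 = \delta_5 \sqrt{m_4}$ proves the claim.

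The main thing to verify is that $m_5$ truly can be chosen independently of the parameters $\mathbf{v} \in \mathcal{B}'_1$, $A_1 \in [-\lambda_1, \lambda_1]$, $A_2 \in [-\lambda_2, \lambda_2]$, and the critical point $\mathbf{z}_0 \in \widetilde{\mathcal{B}}_1$. This is guaranteed because both $\delta_5$ and $m_4$ were produced earlier in this section with the explicit statement that their choice is independent of these parameters (inequality (\ref{rem1ineq42}) is stated uniformly in them, and $\delta_5$ was selected in the previous discussion to be likewise uniform). I anticipate no real obstacle here; Case (II) avoids the mixed-variable complication of Case (I) — there is no need to split further into subregions of $\partial W$ based on the size of $u_2$ as was done in Claim~\ref{claim2} — precisely because the vanishing of $\partial^2 F/\partial x_1 \partial x_2$ causes $\mathcal{F}_1$ and $\mathcal{F}_2$ to decouple into the simple products above.
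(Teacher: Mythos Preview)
Your proof is correct and is precisely the natural adaptation of the Case~(I) argument; the paper in fact omits the proof entirely, saying only that it is similar to that of Claim~\ref{claim2}. Your observation that the diagonal form of $\mathcal{F}$ eliminates the need for the sub-splitting on the left/right edges is exactly the simplification one expects, and the choice $m_5 = \delta_5\sqrt{m_4}$ is uniform for the reasons you state.
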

\begin{proof}
We omit the proof as it is similar to that of Claim \ref{claim2}.
\end{proof}
We now prove the analogue of Claim \ref{claim3}.
\begin{claim} There exists $\delta_6 > 0$ such that the inequality
$$
\| \mathcal{F}(\mathbf{u}) \| < \frac{m_5}{4}  \ \ (\mathbf{u} \in [- 4 \delta_6, 4 \delta_6]^2)
$$
holds uniformly over all choices of $\mathbf{v} \in \mathcal{B}_1'$, $A_1 \in [- \lambda_1, \lambda_1]$,
$A_2 \in [- \lambda_2, \lambda_2]$ (for which $G(\mathbf{u}) + A_1 \log u_1 + A_2 \log u_2$ has a critical point in
$\widetilde{\mathcal{B}}_1$), and the critical point $\mathbf{z}_0 \in \widetilde{\mathcal{B}}_1$.
\end{claim}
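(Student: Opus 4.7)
The proof should closely mirror that of Claim~\ref{claim3}; indeed in Case (II) the map $\mathcal{F}$ is even simpler because the off-diagonal term involving $\phi_{1,2}$ is absent. First, observe directly from the definitions that $\mathcal{F}(\mathbf{0}) = (\mathcal{F}_1(\mathbf{0}),\mathcal{F}_2(\mathbf{0})) = \mathbf{0}$. The strategy is to bound $|\mathcal{F}_i(\mathbf{u})|$ linearly in $\max_j |u_j|$ via the mean value theorem, and then to shrink the domain sufficiently.

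To obtain the desired Lipschitz bound, the plan is to control each entry of $\textnormal{Jac}\,\mathcal{F}(\mathbf{u})$ from above, uniformly over $\mathbf{u} \in [-\delta_4,\delta_4]^2$ and over all admissible choices of $\mathbf{v} \in \mathcal{B}'_1$, $A_1 \in [-\lambda_1,\lambda_1]$, $A_2 \in [-\lambda_2,\lambda_2]$ and critical point $\mathbf{z}_0 \in \widetilde{\mathcal{B}}_1$. The partial derivatives of $\mathcal{F}_j(u_1,u_2) = u_j \sqrt{\varepsilon_j \phi_{j,j}(\mathbf{u})}$ are linear combinations of $\sqrt{\varepsilon_j \phi_{j,j}(\mathbf{u})}$ and $u_j (\partial_k \phi_{j,j}(\mathbf{u}))/\sqrt{\varepsilon_j \phi_{j,j}(\mathbf{u})}$. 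The lower bound $|\phi_{j,j}(\mathbf{u})| > m_4$ from (\ref{rem1ineq42}) keeps the denominators bounded away from zero, and the Case (II) analogues of the quantities displayed in (\ref{twomax2}) of Remark~\ref{rem 4.3} furnish uniform upper bounds on $\phi_{j,j}$ and its first partials. All of these bounds depend only on $F$, $\widetilde{\mathcal{B}}_1$, $\mathcal{B}'_1$, $\lambda_1$ and $\lambda_2$, and not on the specific choices of $\mathbf{v}$, $A_1$, $A_2$ or $\mathbf{z}_0$.

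Consequently there exists $C_5 > 0$, independent of these parameters, such that each entry of $\textnormal{Jac}\,\mathcal{F}(\mathbf{u})$ is bounded in absolute value by $C_5$ on $[-\delta_4,\delta_4]^2$. Applying the mean value theorem between $\mathbf{0}$ and any $\mathbf{u} \in [-\delta_4,\delta_4]^2$ then yields
$$
|\mathcal{F}_i(\mathbf{u})| \leq C_5 \max_{1 \leq j \leq 2} |u_j| \quad (1 \leq i \leq 2),
$$
so that $\| \mathcal{F}(\mathbf{u}) \| \leq \sqrt{2}\,C_5 \max_j |u_j|$. Choosing $\delta_6 > 0$ with $4\delta_6 < \delta_4$ and $4\sqrt{2}\,\delta_6 C_5 < m_5/4$, for instance $\delta_6 = \tfrac{1}{4}\min\{\delta_4,\, m_5/(4\sqrt{2}\, C_5)\}$, produces the desired bound on $[-4\delta_6, 4\delta_6]^2$.

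The only real difficulty here is the uniformity of $C_5$, but this is not a serious obstacle: it is inherited from exactly the same bookkeeping that justified Remark~\ref{rem 4.3}, and indeed is lighter because the Case (II) form of $\mathcal{F}$ involves only $\phi_{1,1}$ and $\phi_{2,2}$, with no contribution from $\phi_{1,2}$. Accordingly the argument here is strictly shorter than the Case (I) version proved earlier.
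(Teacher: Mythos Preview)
Your proof is correct and follows essentially the same approach as the paper: observe $\mathcal{F}(\mathbf{0})=\mathbf{0}$, bound the entries of $\textnormal{Jac}\,\mathcal{F}$ uniformly on $[-\delta_4,\delta_4]^2$, apply the mean value theorem to get a Lipschitz estimate, and then choose $\delta_6$ small enough. If anything, you supply more detail than the paper does on why the Jacobian entries are uniformly bounded (explicitly invoking (\ref{rem1ineq42}) and the analogue of (\ref{twomax2})); the paper simply asserts this can be verified.
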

\begin{proof}
We have $\mathcal{F}(\mathbf{0}) = (\mathcal{F}_1(\mathbf{0}), \mathcal{F}_2(\mathbf{0})) = \mathbf{0}$.
It can be verified that the absolute value of each entry of the matrix $\textnormal{Jac}\mathcal{F}(\mathbf{u})$
can be bounded (from above) uniformly over $\mathbf{u} \in [- \delta_4, \delta_4]^2$,  $\mathbf{v} \in \mathcal{B}'_1$, $A_1 \in [\kappa_1, \lambda_1]$,
$A_2 \in [\kappa_2, \lambda_2]$ (for which $G(\mathbf{u}) + A_1 \log u_1 + A_2 \log u_2$ has a critical point in
$\widetilde{\mathcal{B}}_1$), and the critical point $\mathbf{z}_0 \in \widetilde{\mathcal{B}}_1$.
Thus from the mean value theorem it follows that there exists $C_5 >0$ such that
$$
| \mathcal{F}_j(\mathbf{u})  | \leq C_5 \max_{1 \leq j \leq 2} |u_j|  \ \ \  (\mathbf{u} \in  [- \delta_4, \delta_4]^2).
$$
Therefore, there exists $\delta_6 > 0$, depending only on $C_5$, $\delta_4$ and $m_5$, such that
$$
\| \mathcal{F}(\mathbf{u})   \|  < \frac{m_5}{4} \ \ \ (\mathbf{u} \in  [- 4 \delta_6, 4 \delta_6]^2).
$$
\end{proof}

Let us fix $\delta_6 > 0$ sufficiently small, in particular $\delta_6 < \frac15 \min \{  \delta_1, \delta_5  \}$.
Let
$$
V = \{ \mathbf{y} \in \mathbb{R}^2 :  \| \mathbf{y} \| < m_5 /2  \}.
$$
Then we have $[- 3\delta_6, 3 \delta_6]^2 \subseteq \mathcal{F}^{-1}(V)$; in fact, by Theorem \ref{thm exp inv} $\mathcal{F}$ is a diffeomorphism from $(- 4 \delta_6, 4 \delta_6)^2$ to an open subset of $V$. Let $\mathcal{B}_2 = (\mathbf{u}_0 + [-\delta_6, \delta_6]^2)$. We have
$\mathcal{B}_2 \subseteq \widetilde{\mathcal{B}}_1$, and
given any $\mathbf{z} \in \mathcal{B}_2$,
\begin{eqnarray}
\label{delta72}
\mathcal{B}_2 \subseteq (\mathbf{z} + [-3\delta_6, 3\delta_6]^2) \subseteq \mathcal{B}_1
\end{eqnarray}
and
$$
(- \mathbf{z}\ + \mathcal{B}_2) \subseteq [-3\delta_6, 3\delta_6]^2.
$$
Therefore, it follows that given any $\mathbf{v} \in \mathcal{B}_1'$, $A_1 \in [- \lambda_1, \lambda_1]$ and $A_2 \in [- \lambda_2, \lambda_2]$,
and if $G(\mathbf{u}) + A_1 \log u_1 + A_2 \log u_2$ has a critical point $\mathbf{z}_0 \in \mathcal{B}_2$, then
$\mathcal{F} |_{- \mathbf{z}_0 + \mathcal{B}_2}$ is a diffeomorphism.

After this point we can carry out essentially exactly as in Case (I)
to obtain (\ref{mainintegral2}) for Case (II), and we omit the remaining details; one of the differences is that instead of (\ref{eqnsofphi}) we have
\begin{eqnarray}
\phi(\mathbf{u}) &=& \varepsilon_1 \pi_1^2( \mathcal{F} (\mathbf{u}) ) + \varepsilon_2 \pi^2_2( \mathcal{F} (\mathbf{u}) )
\\
\notag
&=& \varepsilon_1 y_1^2 +  \varepsilon_2 y_2^2.
\end{eqnarray}
This concludes the proof of Proposition \ref{mainprop}.
\section{Deduction of Theorem \ref{mainthm}}
\label{secdeducemain}

Before we can deduce Theorem \ref{mainthm} from Proposition \ref{mainprop}, we need to collect few results.

\begin{lem}
\label{irredlemma}
Let $G \in \mathbb{C}[x_1, \ldots, x_n]$ be a homogeneous form of degree $d$.
If $(n - \dim V_G^*) > 2$, then $G$ is irreducible over $\mathbb{C}$.
\end{lem}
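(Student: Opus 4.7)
The plan is to prove the contrapositive: if $G$ is reducible, then $n - \dim V_G^* \le 2$. So suppose $G = G_1 G_2$ with $G_1, G_2 \in \mathbb{C}[x_1, \ldots, x_n]$ homogeneous of positive degree (this is automatic for a factorization of a homogeneous form). The first step is to observe that by the product rule
\[
\nabla G = G_2 \, \nabla G_1 + G_1 \, \nabla G_2,
\]
so any point $\mathbf{z}$ at which both $G_1(\mathbf{z}) = 0$ and $G_2(\mathbf{z}) = 0$ automatically satisfies $\nabla G(\mathbf{z}) = \mathbf{0}$. In other words, we get the set-theoretic inclusion
\[
V(G_1, G_2) \;\subseteq\; V_G^*.
\]

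The next step is to bound $\dim V(G_1, G_2)$ from below. Since $\mathbb{C}[x_1, \ldots, x_n]$ is a Noetherian domain and $V(G_1, G_2)$ is the zero set of an ideal generated by $2$ elements, Krull's height (principal ideal) theorem says every minimal prime over $(G_1, G_2)$ has height at most $2$, hence every irreducible component of $V(G_1, G_2)$ has dimension at least $n - 2$. (Because both $G_1$ and $G_2$ are homogeneous of positive degree, the origin lies in $V(G_1,G_2)$, so the variety is in particular nonempty.) Therefore $\dim V(G_1, G_2) \geq n - 2$, and combined with the previous inclusion,
\[
\dim V_G^* \;\geq\; n - 2, \qquad\text{i.e.}\qquad n - \dim V_G^* \;\leq\; 2.
\]
This contradicts the hypothesis $n - \dim V_G^* > 2$, so $G$ cannot be reducible.

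There is no real obstacle here; the only delicate point is making sure the dimension bound $\dim V(G_1, G_2) \geq n-2$ is justified by the generalized Krull height theorem rather than by a naive intersection count (which would require a genericity hypothesis that we do not have). Once that is invoked cleanly, the whole argument is three lines.
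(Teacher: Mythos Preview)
Your argument is correct and follows essentially the same route as the paper: factor $G$, use the product rule to show $V(G_1)\cap V(G_2)\subseteq V_G^*$, and then invoke the lower bound $\dim(V(G_1)\cap V(G_2))\ge n-2$. The only difference is that you justify this last bound explicitly via Krull's height theorem (and nonemptiness at the origin), whereas the paper simply asserts it.
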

\begin{proof}
We prove the statement by contradiction. Suppose $G$ factors over $\mathbb{C}$, say $G(\mathbf{x}) = H(\mathbf{x}) \cdot K(\mathbf{x})$ where
$H$ and $K$ are non-constant homogeneous forms.
By taking partial derivatives we have
$$
\frac{\partial G}{\partial x_j} = K \frac{\partial H }{\partial x_j}  + H \frac{\partial K }{\partial x_j} \ \ (1 \leq j \leq n).
$$
Thus it follows that
$$
V(H) \cap V(K) \subseteq V_{G}^*,
$$
and hence
$$
n-2 \leq  \dim (V(H) \cap V(K))  \leq \dim V_{G}^*.
$$
The above inequality is equivalent to $(n - \dim V_{G}^*) \leq 2$, which is a contradiction. Therefore, $G$ is irreducible over $\mathbb{C}$.
\end{proof}
\begin{lem}
\label{Lemma on the B rank}
Let $G \in \mathbb{C}[x_1, \ldots, x_n]$ be a homogeneous form of degree $d > 1$.
Let $Q \in \mathbb{C}[x_2, \ldots, x_n]$ be defined by $Q(x_2, \ldots, x_n) = G(0, x_2, \ldots, x_n)$.
Then
$$
(n-1) - \dim V_{Q}^* \geq n - \dim V_{G}^* - 2.
$$
Here $\dim V_{Q}^*$ is the dimension of $V_{Q}^*$
as an affine variety in $\mathbb{A}_{\mathbb{C}}^{n-1}$.
\end{lem}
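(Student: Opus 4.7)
The plan is to compare $V_Q^*$ with an auxiliary affine variety in $\mathbb{A}_{\mathbb{C}}^n$ obtained by relaxing one of the defining equations of $V_G^*$. First I would define
\[
Y = \left\{ \mathbf{z} \in \mathbb{C}^n : \partial G / \partial x_j (\mathbf{z}) = 0, \ 2 \leq j \leq n \right\} \subseteq \mathbb{A}_{\mathbb{C}}^n.
\]
Because $Y$ is cut out in $\mathbb{A}_{\mathbb{C}}^n$ by only $n-1$ polynomial equations, every irreducible component of $Y$ has dimension at least $1$ by Krull's height theorem.

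The key intermediate step is the bound $\dim Y \leq \dim V_G^* + 1$. By definition $V_G^* = Y \cap V(\partial G / \partial x_1)$, so for each irreducible component $Y_i$ of $Y$ there are two cases: either $\partial G / \partial x_1$ vanishes identically on $Y_i$, in which case $Y_i \subseteq V_G^*$ and $\dim Y_i \leq \dim V_G^*$, or it does not, in which case Krull's Hauptidealsatz gives $\dim(Y_i \cap V(\partial G / \partial x_1)) = \dim Y_i - 1$, so $\dim V_G^* \geq \dim Y_i - 1$. Either way $\dim Y_i \leq \dim V_G^* + 1$, and taking the maximum over components yields $\dim Y \leq \dim V_G^* + 1$.

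Now consider the hyperplane slice $\widetilde{W} := Y \cap \{x_1 = 0\}$, for which $\dim \widetilde{W} \leq \dim Y$ holds trivially. Since $\partial Q / \partial x_j (x_2, \ldots, x_n) = \partial G / \partial x_j (0, x_2, \ldots, x_n)$ for $2 \leq j \leq n$, the projection $(0, x_2, \ldots, x_n) \mapsto (x_2, \ldots, x_n)$ identifies $\widetilde{W}$ with $V_Q^* \subseteq \mathbb{A}_{\mathbb{C}}^{n-1}$ as affine varieties, so $\dim V_Q^* = \dim \widetilde{W} \leq \dim Y \leq \dim V_G^* + 1$. Rearranging this last inequality gives $(n-1) - \dim V_Q^* \geq n - \dim V_G^* - 2$, as claimed.

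The only step requiring any real thought is the bound $\dim Y \leq \dim V_G^* + 1$; everything else is routine bookkeeping with a hyperplane slice together with an obvious isomorphism of affine varieties. I expect no serious obstacle, since the whole argument rests on a standard component-wise application of Krull's principal ideal theorem.
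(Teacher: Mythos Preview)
Your argument is correct and is essentially the standard proof of this fact. The paper itself does not give a proof at all: it simply refers the reader to \cite[Lemma 3.1]{SS} (noting that this corrects a minor oversight in \cite[Lemma 3]{CM}), so there is no in-paper argument to compare against. Your approach---drop one gradient condition to form $Y$, bound $\dim Y$ via the principal ideal theorem, then slice by $\{x_1=0\}$---is exactly the kind of dimension count used in those references.

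One small remark: when you invoke Krull's Hauptidealsatz to get $\dim\bigl(Y_i \cap V(\partial G/\partial x_1)\bigr) = \dim Y_i - 1$, you are implicitly using that this intersection is nonempty, which in general affine situations need not hold. Here it does, because $d>1$ forces each $\partial G/\partial x_j$ to be homogeneous of degree $d-1\geq 1$ (or identically zero), so $Y$ is a cone, each irreducible component $Y_i$ is a cone, and the origin lies in $Y_i \cap V(\partial G/\partial x_1)$. Your opening observation that every component of $Y$ has dimension at least $1$ is true but is not by itself enough to guarantee nonemptiness; it is the homogeneity that does the work. This is a one-line fix and does not affect the validity of the argument.
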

\begin{proof}
We refer the reader to \cite[Lemma 3.1]{SS} for the proof  (The minor oversight in \cite[Lemma 3]{CM} is corrected here.).
\end{proof}

For $1 \leq i < j \leq n$, let
$$
\mathfrak{G}_{i,j}(\mathbf{x}) = \left( x_i \frac{\partial^{2} F}{\partial x_i^2 } (\mathbf{x})  + \frac{\partial F}{\partial x_i} (\mathbf{x})   \right) \cdot \left( x_j \frac{\partial^{2} F}{\partial x_j^2}(\mathbf{x})
+ \frac{\partial F}{\partial x_j}(\mathbf{x})  \right) -
x_i x_j \left( \frac{\partial^{2} F}{\partial{x_i} \partial{x_j}}(\mathbf{x}) \right)^2.
$$
\begin{prop}
\label{prop8.3}
Let $F \in \mathbb{C}[x_1, \ldots, x_n]$ be a homogeneous form of degree $d > 1$ satisfying $(n - \dim V_{F}^*) > 4$.
Then there exist $1 \leq i < j \leq n$ such that $F$ does not divide $\mathfrak{G}_{i,j}$.
\end{prop}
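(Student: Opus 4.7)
The plan is by contradiction: assume that $F$ divides $\mathfrak{G}_{i,j}$ for every pair $1\le i<j\le n$, so that each $\mathfrak{G}_{i,j}$ vanishes identically on the irreducible hypersurface $V(F)$. (Irreducibility of $F$ over $\mathbb{C}$ is automatic from Lemma \ref{irredlemma}, since the hypothesis $n-\dim V_F^*>4>2$ is satisfied.) The goal is to derive a contradiction by showing this assumption forces $\dim V_F^*\ge n-4$.

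The first step is to reinterpret the polynomials $\mathfrak{G}_{i,j}$. On the open set $\{x_1\cdots x_n\neq 0\}$, define the symmetric matrix-valued function
\[
\widetilde{J}_{ii}(\mathbf{x})=\frac{\partial^{2} F}{\partial x_i^2}(\mathbf{x})+\frac{1}{x_i}\frac{\partial F}{\partial x_i}(\mathbf{x}),
\qquad
\widetilde{J}_{ij}(\mathbf{x})=\frac{\partial^{2} F}{\partial x_i \partial x_j}(\mathbf{x})\quad(i\neq j).
\]
A direct check gives $\widetilde{J}_{ii}\widetilde{J}_{jj}-\widetilde{J}_{ij}^{2}=\mathfrak{G}_{i,j}/(x_i x_j)$, so under the standing assumption every $2\times 2$ principal minor of $\widetilde{J}$ vanishes on the Zariski-dense subset of $V(F)$ with all $x_i\neq 0$. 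Applying Euler's identity to $F$ and to each $\partial F/\partial x_j$ yields the two clean relations $\widetilde{J}\mathbf{x}=d\,\nabla F$ and $\mathbf{x}^{\top}\widetilde{J}\mathbf{x}=d^{2}F$; on $V(F)\setminus V_F^*$ these read $\widetilde{J}(\mathbf{x}_0)\mathbf{x}_0\neq\mathbf{0}$ while $\mathbf{x}_0^{\top}\widetilde{J}(\mathbf{x}_0)\mathbf{x}_0=0$.

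To convert this into a contradiction I would descend using Lemma \ref{Lemma on the B rank}. A first specialization $x_1=0$ produces $Q(x_2,\ldots,x_n)=F(0,x_2,\ldots,x_n)$ satisfying $(n-1)-\dim V_Q^*>2$, so by Lemma \ref{irredlemma} $Q$ is irreducible, and the divisibility specializes to $Q\mid \mathfrak{G}^Q_{i,j}$ for all $2\le i<j\le n$. A second application with $x_2=0$ produces a nonzero form $Q'$ in $n-2$ variables with $(n-2)-\dim V_{Q'}^*>0$, still satisfying $Q'\mid\mathfrak{G}^{Q'}_{i,j}$ and the analogous Euler and principal-minor identities. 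In this smaller setting the rigid structure produced by combining $\widetilde{J}^{Q'}\mathbf{x}=d\,\nabla Q'\neq\mathbf{0}$ with $\mathbf{x}^{\top}\widetilde{J}^{Q'}\mathbf{x}=0$ and the vanishing of every $2\times 2$ principal minor of $\widetilde{J}^{Q'}$ pins down a consistent rank-one factorization $\widetilde{J}^{Q'}=vv^{\top}$ generically; the relation $(v\cdot\mathbf{x})^{2}=0$ then forces $v\cdot\mathbf{x}=0$, hence $\widetilde{J}^{Q'}\mathbf{x}=0$, contradicting $d\,\nabla Q'\neq\mathbf{0}$ — unless $Q'$ is itself a $d$-th power of a linear form. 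That possibility, however, is incompatible with the irreducibility of $Q$ obtained from the first reduction, since it would force a corresponding factorization to propagate up through the specialization.

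The main obstacle is the linear-algebraic step: a symmetric matrix over $\mathbb{C}$ whose $2\times 2$ principal minors all vanish need not have rank $\le 1$, because the signs in a would-be rank-one factorization $vv^{\top}$ can be inconsistent. The two Euler identities are exactly what supply the missing rigidity and force a consistent sign choice on a Zariski-dense subset, and the threshold $n-\dim V_F^*>4$ appears precisely because two successive applications of Lemma \ref{Lemma on the B rank} are required — one to guarantee irreducibility of the first restriction $Q$, the other to guarantee non-triviality of the second restriction $Q'$ — before the rank-one analysis becomes sharp enough to close the argument.
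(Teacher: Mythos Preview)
Your proposal has a genuine gap at its core: the linear-algebraic step you yourself flag as ``the main obstacle'' is never actually resolved. You assert that the two Euler identities ``supply the missing rigidity'' forcing a rank-one factorization $\widetilde{J}=vv^{\top}$, but this is not true pointwise and you give no argument for why it should hold generically. Concretely, the symmetric matrix
\[
\widetilde{J}=\begin{pmatrix}1&1&1\\1&1&-1\\1&-1&1\end{pmatrix},\qquad \mathbf{x}=(0,1,1)^{\top},
\]
has all three $2\times2$ principal minors equal to zero, satisfies $\widetilde{J}\mathbf{x}=(2,0,0)^{\top}\neq\mathbf{0}$ and $\mathbf{x}^{\top}\widetilde{J}\mathbf{x}=0$, yet has $\det\widetilde{J}=-4$ and hence rank $3$. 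So the constraints you have collected do not, even in principle, force rank $\le 1$; some further input is needed and you have not supplied it. Your escape clause ``unless $Q'$ is itself a $d$-th power of a linear form'' is also not closed off: after two applications of Lemma~\ref{Lemma on the B rank} you only know $(n-2)-\dim V_{Q'}^*>0$, which is perfectly compatible with $Q'=L^d$, and you do not explain why irreducibility of the intermediate form $Q$ would prevent its restriction $Q'=Q|_{x_2=0}$ from being a perfect $d$-th power.

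For comparison, the paper's proof avoids any global rank analysis. It fixes a single pair of indices (after relabelling) and argues directly: if $\partial^{2}F/\partial x_1\partial x_2\equiv 0$ then $\mathfrak{G}_{1,2}$ is a product of two factors of degree $\le d-1$, which the irreducible degree-$d$ form $F$ cannot divide. Otherwise, a careful $x_1$-adic valuation count (choosing the indices so that some monomial of $F$ is divisible by $x_2$ but not $x_1$) shows that $F\mid\mathfrak{G}_{1,2}$ would force, after setting $x_1=0$, the restriction $F(0,x_2,\mathbf{y})$ to divide a product of two nonzero forms each of degree $<d$; Lemmas~\ref{irredlemma} and~\ref{Lemma on the B rank} (using $n-\dim V_F^*>4$) make $F(0,x_2,\mathbf{y})$ irreducible of degree $d$, giving the contradiction. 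The argument is elementary and requires no matrix-rank considerations.
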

\begin{proof}
We prove the statement in two steps: Step 1. There exist $1 \leq i < j \leq n$ such that
$\frac{\partial^{2} F}{\partial x_i \partial x_j } \equiv 0$; Step 2. For all $1 \leq i < j \leq n$ we have
$\frac{\partial^{2} F}{\partial x_i \partial x_j }  \not \equiv 0$. 

Without loss of generality we assume $F(\mathbf{x}) \not = F(\mathbf{x})|_{x_j =0}$ for any $1 \leq j \leq n$,
and this implies
$$
x_j \frac{\partial^{2} F}{\partial x_j^2 }  + \frac{\partial F}{\partial x_j} \not \equiv 0 \ \ (1 \leq j \leq n).
$$
For each $1 \leq j \leq n$, let us denote
$$
x_j \frac{\partial^{2} F}{\partial x_j^2 }(\mathbf{x})  + \frac{\partial F}{\partial x_j}(\mathbf{x}) = x_1^{\nu_j} K_j (\mathbf{x}),
$$
where
$\nu_j \in \mathbb{Z}_{\geq 0}$ and
$K_j$ is a homogeneous form not divisible by $x_1$. Clearly, $K_j$ has degree less than or equal to
$(d-1)$.

Step 1. Without loss of generality let us assume $i=1$ and $j=2$. Suppose $F$ divides $\mathfrak{G}_{1,2}$;
there exists a homogeneous form $P \in \mathbb{C}[x_1, \ldots, x_n]$ such that
$$
x_1^{\nu_1 + \nu_2} K_1 (\mathbf{x}) K_2 (\mathbf{x}) = F(\mathbf{x}) P(\mathbf{x}).
$$
Since $F$ is irreducible over $\mathbb{C}$ (Lemma \ref{irredlemma}), it follows that it must divide one of
$x_1$, $K_1$ or $K_2$ but this is not possible as $\deg F = d > 1$; therefore, $F$ does not divide $\mathfrak{G}_{1,2}$.

Step 2. First we prove the existence of $1 \leq i' \not = j' \leq n$ such that
there exists a monomial of $F(\mathbf{x})$, with a non-zero coefficient, which is divisible by $x_{i'}$ but not divisible by $x_{j'}$.
Let us suppose otherwise, in which case it follows that
$(x_1 \cdots x_n) | F$ because every $x_i$ appears in at least one of
the monomials of $F(\mathbf{x})$ with a non-zero coefficient. However, this is not possible as $F$ is irreducible over $\mathbb{C}$. Thus there exist $1 \leq i' \not = j' \leq n$ such that there exists a monomial of $F(\mathbf{x})$, with a non-zero coefficient, which is divisible by $x_{i'}$ but not divisible by $x_{j'}$. Without loss of generality let us assume $i'=2$ and $j'=1$.
For simplicity we denote $\mathbf{y} = (x_3, \ldots, x_n)$.
We decompose $F(\mathbf{x})$ in the following manner
$$
F(x_1, x_2, \mathbf{y} ) = F_1(x_1, \mathbf{y}) + G(x_1, x_2, \mathbf{y})
+ F_2(x_2, \mathbf{y}) + F(0, 0, \mathbf{y} ),
$$
where $F_1 (x_1,\mathbf{y} )$ is the sum of all monomials of $F(\mathbf{x})$ which are divisible by $x_1$
but not by $x_2$, $F_2 (x_2,\mathbf{y} )$ is the sum of all monomials of $F(\mathbf{x})$ which are divisible by $x_2$
but not by $x_1$, and $G(x_1, x_2, \mathbf{y})$ is the sum of all monomials of $F(\mathbf{x})$ which are divisible by both $x_1$ and $x_2$. From our assumption ($i'=2$ and $j'=1$) we know that
$F_2(x_2,\mathbf{y} ) \not \equiv 0$ and $x_2 | F_2 (x_2,\mathbf{y} )$. Also since we are in Step 2 we have
$G \not \equiv 0$.

Given a non-zero polynomial $g \in \mathbb{C}[x_1, \ldots x_n] \backslash \{ 0\}$  we let $\nu_{x_1}(g) = \nu \geq 0$ be the number such that
$x_1^{\nu} | g$ but $x_1^{\nu + 1} \nmid g$.
Next we prove
$$
\nu_1 = \nu_{x_1} \left( x_1 \frac{\partial^{2} F}{\partial x_1^2 }  + \frac{\partial F}{\partial x_1}  \right)
=
\nu_{x_1} \left( \frac{\partial F}{\partial x_1}  \right)
$$
and
\begin{eqnarray}
\label{nu2}
\nu_2 = \nu_{x_1} \left( x_2 \frac{\partial^{2} F}{\partial x_2^2 }  + \frac{\partial F}{\partial x_2}  \right)
=
\nu_{x_1} \left( \frac{\partial F}{\partial x_2} \right) = 0.
\end{eqnarray}
Let us denote
$$
F(\mathbf{x}) = H_{d} x_1^{d} + H_{d-1} x_1^{d-1} + \cdots + H_{\ell} x_1^{\ell} + H_0,
$$
where each $H_j$ is either the zero polynomial or a homogeneous form of degree $(d-j)$ in $x_2$ and $\mathbf{y}$, and in particular $\ell \geq 1$ and $H_{\ell} \not \equiv 0$.
Suppose $\ell > 1$. Clearly $(\ell (\ell - 1) H_{\ell}  + \ell H_{\ell})\not \equiv 0$; therefore, we have
\begin{eqnarray}
\label{L-1equality}
\nu_{x_1} \left(  x_1 \frac{\partial^{2} F}{\partial x_1^2 }  + \frac{\partial F}{\partial x_1} \right)
= \ell - 1
= \nu_{x_1} \left(  \frac{\partial F}{\partial x_1} \right).
\end{eqnarray}
On the other hand, suppose $\ell = 1$. In this case, since $\ell H_{\ell} \not \equiv 0$ we obtain (\ref{L-1equality}) as well.

For the second equality (\ref{nu2}), first we note $\frac{\partial F_2}{\partial x_2} \not \equiv 0$.
From
\begin{eqnarray}
\label{8.3}
\frac{\partial F}{\partial x_2} (x_1, x_2, \mathbf{y}) = \frac{\partial G}{\partial x_2} (x_1, x_2, \mathbf{y})
+ \frac{\partial F_2}{\partial x_2} (x_2, \mathbf{y}),
\end{eqnarray}
it is easy to deduce
\begin{eqnarray}
\nu_{x_1} \left(  \frac{\partial F}{\partial x_2} \right) = 0
\end{eqnarray}
because $x_1$ divides $ \frac{\partial G}{\partial x_2} (x_1, x_2, \mathbf{y})$ but it does not divide $\frac{\partial F_2}{\partial x_2} (x_2, \mathbf{y})$.
By the definition of $G(x_1, x_2, \mathbf{y})$ it is easy to see that $\frac{\partial G}{\partial x_2} (x_1, x_2, \mathbf{y})$ and $x_2 \frac{\partial^2  G}{\partial x_2^2} (x_1, x_2, \mathbf{y})$ are both homogeneous forms divisible by $x_1$ (the latter possibly being the zero polynomial).
From (\ref{8.3}) we have
\begin{eqnarray}
x_2 \frac{\partial^2 F}{\partial x_2^2}  (x_1, x_2, \mathbf{y})= x_2 \frac{\partial^2 G}{\partial x_2^2} (x_1, x_2, \mathbf{y})
+ x_2 \frac{\partial^2 F_2}{\partial x_2^2}  (x_2, \mathbf{y}).
\end{eqnarray}
It can be easily verified that
$$
x_2 \frac{\partial^2 F_2}{\partial x_2^2} + \frac{\partial F_2}{\partial x_2} \not \equiv 0,
$$
and since it does not involve any $x_1$ variables, in particular it is not divisible by $x_1$.
Therefore, we obtain
$$
\nu_{x_1} \left( x_2 \frac{\partial^{2} F}{\partial x_2^2 }  + \frac{\partial F}{\partial x_2}  \right)
=
\nu_{x_1} \left( x_2 \frac{\partial^{2} F_2}{\partial x_2^2 }  + \frac{\partial F_2}{\partial x_2}  \right)
= 0,
$$
and we have established the second equality (\ref{nu2}).

We have
\begin{eqnarray}
\frac{\partial F}{\partial x_1} = \frac{\partial F_1}{\partial x_1} + \frac{\partial G}{\partial x_1}.
\end{eqnarray}
Since every monomial of $\frac{\partial G}{\partial x_1}$ is divisible by $x_2$
while none of the monomials of $\frac{\partial F_1}{\partial x_1}$ is divisible by $x_2$, it follows that
$$
\nu_1 = \nu_{x_1} \left( \frac{\partial F}{\partial x_1}  \right)
= \min \{
\nu_{x_1} \left( \frac{\partial F_1}{\partial x_1}  \right),
\nu_{x_1} \left( \frac{\partial G}{\partial x_1}  \right)
\}.
$$
Furthermore, we have
$$
\frac{\partial^2 F }{\partial x_1 \partial x_2} = \frac{\partial^2 G }{\partial x_1 \partial x_2} \not \equiv 0.
$$
Let
$$
\mu = \nu_{x_1} \left( \frac{\partial^2 G }{\partial x_1 \partial x_2}  \right)
\ \ \textnormal{  and  }  \  \
\frac{\partial^2 F }{\partial x_1 \partial x_2} (\mathbf{x}) = x_1^{\mu} K_3(\mathbf{x}).
$$
It turns out
\begin{eqnarray}
\label{eqnmu}
\mu = \nu_{x_1} \left( \frac{\partial G}{\partial x_1} \right).
\end{eqnarray}
To see this let us denote
$$
G(x_1, x_2, \mathbf{y}) = \sum_{ (i_1, i_2) \in I } B_{i_1, i_2}(\mathbf{y}) x_1^{i_1} x_2^{i_2},
$$
where $I \subseteq (\mathbb{N} \times \mathbb{N})$ is the subset of $(\mathbb{N} \times \mathbb{N})$ such that
each $B_{i_1, i_2}$ is a non-zero polynomial in $\mathbf{y}$ for all $(i_1, i_2) \in I$. It is then clear that
$$
\nu_{x_1} \left( \frac{\partial G}{\partial x_1}  \right) = \min_{ \substack{ (i_1, i_2) \in I  } } (i_1 - 1) = \nu_{x_1} \left( \frac{\partial^2 G }{\partial x_1 \partial x_2}  \right) =  \mu.
$$
Therefore, we have
\begin{eqnarray}
\label{eqnnu1}
\nu_1 = \min \{
\nu_{x_1} \left( \frac{\partial F_1}{\partial x_1}  \right),
\nu_{x_1} \left( \frac{\partial G}{\partial x_1}  \right)
\} \leq \mu <  2 \mu + 1.
\end{eqnarray}
Now suppose $F$ divides $\mathfrak{G}_{1,2}$. Recall $F$ is irreducible over $\mathbb{C}$. Then from (\ref{nu2}) and  (\ref{eqnnu1})
it follows that there exists a homogeneous form $P \in \mathbb{C}[x_1, \ldots, x_n]$ such that
$$
x_1^{\nu_1} K_1(\mathbf{x}) K_2(\mathbf{x}) - x_1^{2 \mu + 1} x_2 K^2_3(\mathbf{x}) = F(\mathbf{x}) x_1^{\nu_1} P(\mathbf{x}).
$$
Consequently, we obtain
$$
K_1(\mathbf{x}) K_2(\mathbf{x}) - x_1^{2 \mu + 1 - \nu_1} x_2 K^2_3(\mathbf{x}) = F(\mathbf{x}) P(\mathbf{x}).
$$
By setting $x_1 = 0$ and noting (\ref{eqnnu1}), we obtain
$$
K_1(0, x_2, \mathbf{y}) K_2(0, x_2, \mathbf{y}) = F(0, x_2, \mathbf{y}) P(0, x_2, \mathbf{y}).
$$
Since $K_1$ and $K_2$ are not divisible by $x_1$, we have $K_1(0, x_2, \mathbf{y}) \not \equiv 0$ and $K_2(0, x_2, \mathbf{y}) \not \equiv 0$; therefore,
it follows that $F(0, x_2, \mathbf{y}) \not \equiv 0$ and $ P(0, x_2, \mathbf{y}) \not \equiv 0$ as well.
Our assumption $(n - \dim V_{F}^*) > 4$ and Lemmas  \ref{irredlemma} and \ref{Lemma on the B rank} imply $F(0, x_2, \mathbf{y})$ is irreducible over $\mathbb{C}$. Thus $F(0, x_2, \mathbf{y})$ must divide one of $K_1(0, x_2, \mathbf{y})$ or $K_2(0, x_2, \mathbf{y})$,
but this is not possible as $\deg F(0, x_2, \mathbf{y}) = d$ while $\deg K_1(0, x_2, \mathbf{y}) < d$ and $\deg K_2(0, x_2, \mathbf{y}) < d$; we have obtained a contradiction. Therefore, $F$ does not divide $\mathfrak{G}_{1,2}$.
\end{proof}

Finally, we have the following proposition which we prove using algebraic geometry over $\mathbb{R}$ in Appendix \ref{alggeomreal}.
\begin{prop}
\label{PropApp}
Let $F \in \mathbb{R}[x_1, \ldots, x_n]$ be a homogeneous form of degree $d$ irreducible over $\mathbb{R}$.
Let $H \in \mathbb{R}[x_1, \ldots, x_n]$ be a non-zero polynomial such that $F \nmid H$.
Suppose there is a non-singular point of $V(F; \mathbb{R})$ inside $(0,1)^n$.
Then there exists an open (with respect to the Euclidean topology) set $U \subseteq (0,1)^n$ such that
there exists a non-singular point of $V(F; \mathbb{R})$  inside $U$
and $U \cap V(H; \mathbb{R}) = \emptyset.$
\end{prop}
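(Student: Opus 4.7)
The plan is to argue by contradiction: suppose that in every Euclidean open neighbourhood of the given non-singular real point $\mathbf{p}_0 \in V(F;\mathbb{R}) \cap (0,1)^n$, every non-singular real point of $V(F)$ already lies in $V(H;\mathbb{R})$, and I shall deduce $F \mid H$, contradicting the hypothesis.

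The first step is to observe that $F$ is in fact irreducible over $\mathbb{C}$. In characteristic zero, irreducibility of $F$ over $\mathbb{R}$ already forces $F$ to be squarefree over $\mathbb{C}$, so factor $F = c\,G_1 G_2 \cdots G_k$ into distinct $\mathbb{C}$-irreducible factors; because $F \in \mathbb{R}[\mathbf{x}]$, complex conjugation permutes the $G_i$, and the fact that $F$ is itself irreducible over $\mathbb{R}$ forces this permutation to consist of a single orbit. If $k \geq 2$, then at any real zero of $F$ all the $G_i$ must vanish simultaneously, so the product rule yields $\nabla F = \mathbf{0}$ at that point, contradicting the existence of the non-singular $\mathbf{p}_0$. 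Hence $k = 1$ and $F$ is irreducible over $\mathbb{C}$.

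Next, since $\mathbf{p}_0$ is non-singular, after relabelling I may assume $\partial F/\partial x_n(\mathbf{p}_0) \neq 0$, and the implicit function theorem produces a real analytic function $g$ on an open set $W \subseteq \mathbb{R}^{n-1}$ containing $\mathbf{p}_0' := (p_{0,1},\ldots,p_{0,n-1})$ with $g(\mathbf{p}_0') = p_{0,n}$ and $F(\mathbf{x}', g(\mathbf{x}')) \equiv 0$ on $W$. Shrinking $W$, the graph of $g$ is contained in $(0,1)^n$ and consists entirely of non-singular real points of $V(F)$; by the contradiction hypothesis, the real analytic function $h(\mathbf{x}') := H(\mathbf{x}', g(\mathbf{x}'))$ is identically zero on $W$. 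Extending $g$ to a holomorphic $\tilde g$ on a connected complex neighbourhood $\tilde W \subseteq \mathbb{C}^{n-1}$ of $W$, the holomorphic function $\tilde h(\mathbf{x}') := H(\mathbf{x}', \tilde g(\mathbf{x}'))$ on $\tilde W$ vanishes on the totally real maximal-dimensional submanifold $W$, and its complex Taylor expansion at any point of $W$ has all coefficients zero (each such coefficient being determined by real directional derivatives of $\tilde h$ along $W$, which all vanish), forcing $\tilde h \equiv 0$ on $\tilde W$.

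Consequently $H$ vanishes on the complex analytic piece $\{(\mathbf{x}', \tilde g(\mathbf{x}')) : \mathbf{x}' \in \tilde W\}$ of the irreducible $(n-1)$-dimensional complex variety $V(F;\mathbb{C})$; this piece has pure complex dimension $n-1$, so its Zariski closure inside $V(F;\mathbb{C})$ is all of $V(F;\mathbb{C})$, and therefore $H$ vanishes on $V(F;\mathbb{C})$. Hilbert's Nullstellensatz together with $\mathbb{C}$-irreducibility of $F$ then yields $F \mid H$, the desired contradiction. The negation of the contradiction hypothesis produces a non-singular real point $\mathbf{p}_1 \in V(F;\mathbb{R}) \cap (0,1)^n$ with $H(\mathbf{p}_1) \neq 0$, and taking $U$ to be a sufficiently small open Euclidean ball around $\mathbf{p}_1$ inside $(0,1)^n \cap \{H \neq 0\}$ finishes the proof. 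I expect the main obstacle to be the transition from local real-analytic vanishing on $V(F;\mathbb{R})$ to global algebraic vanishing on $V(F;\mathbb{C})$: this needs both the preliminary reduction to $\mathbb{C}$-irreducibility of $F$ (so that Nullstellensatz applies) and the correct complexification of the real parameterisation via the identity principle for holomorphic functions along a totally real submanifold.
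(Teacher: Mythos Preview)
Your proof is correct and shares the paper's overall architecture: assume every non-singular real point of $V(F;\mathbb{R})$ in $(0,1)^n$ lies on $V(H;\mathbb{R})$, deduce $F\mid H$, obtain a contradiction, and then pick a small ball around a non-singular point off $V(H;\mathbb{R})$. The genuine divergence is in the middle step, where the paper and you take quite different routes.

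The paper works entirely over $\mathbb{R}$. It first uses real algebraic geometry (\cite{BCR}) to show that, because $F$ is $\mathbb{R}$-irreducible and has a smooth real zero, one has $I(V(F;\mathbb{R}))=(F)$; it then proves a separate lemma (Lemma~\ref{A1}) that for an irreducible real variety $V$, any Euclidean-open patch of smooth points is Zariski dense in $V$. Combining these, $V(F;\mathbb{R})\subseteq V(H;\mathbb{R})$ forces $H\in(F)$. Your approach instead passes to $\mathbb{C}$: you first observe (correctly) that the existence of a smooth real zero upgrades $\mathbb{R}$-irreducibility of $F$ to $\mathbb{C}$-irreducibility, then you complexify the implicit-function parametrisation, invoke the identity principle along the totally real slice to get $H$ vanishing on a Euclidean-open piece of $V(F;\mathbb{C})$, and finish with the complex Nullstellensatz. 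What you gain is that you avoid the real-algebraic-geometry machinery entirely (in particular the delicate identification $I(V(F;\mathbb{R}))=(F)$, which fails without the smooth-point hypothesis); what the paper gains is a purely algebraic argument with no appeal to holomorphic extension or the identity theorem. Both are clean; yours is arguably more elementary for readers not already fluent in real algebraic geometry.

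One small point of phrasing: your opening ``in every Euclidean open neighbourhood of $\mathbf{p}_0$'' is redundant (taking the neighbourhood to be $(0,1)^n$ already gives the statement you use), and the contradiction hypothesis you actually need and use is simply ``every non-singular point of $V(F;\mathbb{R})$ in $(0,1)^n$ lies in $V(H;\mathbb{R})$'', exactly as in the paper. This is cosmetic and does not affect correctness.
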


We are now in position to deduce Theorem \ref{mainthm}. Let $F$ be as in the statement of the theorem.
In particular, by Lemma \ref{irredlemma} it follows that $F$ is irreducible over $\mathbb{C}$.
Without loss of generality we suppose $F(\mathbf{x}) \not = F(\mathbf{x})|_{x_j =0}$ for any $1 \leq j \leq n$ so that
$$
x_j \frac{\partial^{2} F}{\partial x_j^2 }  + \frac{\partial F}{\partial x_j} \not \equiv 0 \ \ (1 \leq j \leq n).
$$
Also without loss of generality let us suppose $F \nmid \mathfrak{G}_{1,2}$ (Proposition \ref{prop8.3}).
We consider the following two cases

Case (I'):  $\frac{\partial^{2} F}{\partial x_1 \partial x_2 }  \not \equiv 0$;

Case (II'): $\frac{\partial^{2} F}{\partial x_1 \partial x_2} \equiv 0$.
\newline
\newline
Clearly we are in one of the two cases.

Let us begin with Case (I'). In this case we know $F$ does not divide any one of
$$
\frac{\partial^{2} F}{\partial x_1 \partial x_2}, \ \ x_1 \frac{\partial^{2} F}{\partial x_1^2}
+ \frac{\partial F}{\partial x_1} \ \ \textnormal{  and  } \ \   x_2 \frac{\partial^{2} F}{\partial x_2^2}
+ \frac{\partial F}{\partial x_2},
$$
because they are all non-zero polynomials of degrees less than or equal to $(d-1)$ while $\deg F = d$.
Let
$$
H_1 = \mathfrak{G}_{1,2} \cdot \frac{\partial^{2} F}{\partial x_1 \partial x_2} \cdot \left( x_1 \frac{\partial^{2} F}{\partial x_1^2}
+ \frac{\partial F}{\partial x_1}
\right)
\cdot \left(  x_2 \frac{\partial^{2} F}{\partial x_2^2}
+ \frac{\partial F}{\partial x_2} \right).
$$
Then it follows that $F \nmid H_1$. Therefore, we obtain from
Proposition \ref{PropApp} that $F$ satisfies the hypotheses of Case (I).
Similarly, for Case (II') we know $F$ does not divide either one of
$$
x_1 \frac{\partial^{2} F}{\partial x_1^2}
+ \frac{\partial F}{\partial x_1} \ \ \textnormal{  and  } \ \   x_2 \frac{\partial^{2} F}{\partial x_2^2}
+ \frac{\partial F}{\partial x_2},
$$
because they are both non-zero polynomials of degrees $(d-1)$ while $\deg F = d$.
Let
$$
H_2 = \mathfrak{G}_{1,2} \cdot
 \left( x_1 \frac{\partial^{2} F}{\partial x_1^2}
+ \frac{\partial F}{\partial x_1}
\right)
\cdot \left(  x_2 \frac{\partial^{2} F}{\partial x_2^2}
+ \frac{\partial F}{\partial x_2} \right).
$$
Then it follows that $F \nmid H_2$. Therefore, we obtain from
Proposition \ref{PropApp} that $F$ satisfies the hypotheses of Case (II).
Thus we have obtained that if $F$ satisfies the hypotheses of Theorem \ref{mainthm}, then
it follows that $F$ satisfies the hypotheses of Proposition \ref{PropApp}; therefore, a special case of Theorem \ref{mainthm}, where $\mathbf{r} = (r_1, \ldots, r_n) = \mathbf{0}$, follows from Proposition \ref{mainprop}.

In order to achieve the result with $\mathbf{r} \in  [\theta_1, \theta_1'] \times \cdots \times [\theta_n, \theta_n'] $ as stated in Theorem \ref{mainthm},
we  consider the smooth weight to be
$$
\varpi(\mathbf{x}) x_1^{r_1} \cdots x_n^{r_n}
$$
in the case of Theorem \ref{mainthm} we have obtained, and the result follows.
\appendix

\section{Algebraic Geometry over the reals}
\label{alggeomreal}
The goal of this appendix is to prove Proposition \ref{PropApp}.
First we recall some terminologies from algebraic geometry over the reals. The main source of the material in
this appendix is \cite{BCR}. Since $\mathbb{R}$ is not algebraically closed,
Hilbert Nullstellensatz does not hold in this setting; for example, a non-constant polynomial $x_1^2 + 1 \in \mathbb{R}[x_1]$ does not have a zero in $\mathbb{R}$.

We denote by $\mathbb{A}^n_{\mathbb{R}}$ the space $\mathbb{R}^n$ with the Zariski topology, i.e.
the closed sets of $\mathbb{A}^n_{\mathbb{R}}$ are of the form
$$
\{ \mathbf{z} \in \mathbb{R}^n:  f(\mathbf{x}) = 0 \ (f \in J)  \}
$$
where $J \subseteq \mathbb{R}[x_1, \ldots, x_n]$; these closed sets are referred to as affine varieties (not necessarily irreducible).
Since $\mathbb{R}[x_1, \ldots, x_n]$ is a Noetherian ring, it follows that that every closed set in $\mathbb{A}^n_{\mathbb{R}}$
is in fact a zero locus of finitely many polynomials in $\mathbb{R}[x_1, \ldots, x_n]$.
Given a subset $S \subseteq \mathbb{R}^n$ we let
$$
I(S) = \{ f \in \mathbb{R}[x_1, \ldots, x_n] : f(\mathbf{x}) = 0 \ (\mathbf{x} \in S)  \},
$$
which is an ideal of $\mathbb{R}[x_1, \ldots, x_n]$.
Let $W \subseteq \mathbb{A}^n_{\mathbb{R}}$ be an affine variety. We define $\dim_{\mathbb{A}^n_{\mathbb{R}}} W$ to be
the Krull dimension of the ring $\mathbb{R}[x_1, \ldots, x_n] / I(W)$, i.e. the largest number $k$ such that
there exist prime ideals $\mathcal{P}_i \subseteq \mathbb{R}[x_1, \ldots, x_n] / I(W)$ $(0 \leq i \leq k)$ such that
$$
\mathcal{P}_0 \subsetneq \mathcal{P}_1 \subsetneq \cdots \subsetneq \mathcal{P}_k.
$$
The number of irreducible components of $W$ is finite, and $\dim_{\mathbb{A}^n_{\mathbb{R}}} W$
is in fact the maximum of the dimensions of its irreducible components. We will not consider the empty set as
being irreducible.

For any affine variety $W \subseteq \mathbb{A}^n_{\mathbb{R}}$ we have $V( I(W); \mathbb{R} ) = W$.
In particular, if $W$ and $X$ are affine varieties such that $W \subsetneq X$ then $I(X) \subsetneq I(W)$.
Also $W$ is irreducible if and only if $I(W)$ is a prime ideal \cite[Theorem 2.8.3 (ii)]{BCR}.
Thus it follows that if $W$ and $X$ are both irreducible affine varieties and $W \subsetneq X$, then
\begin{eqnarray}
\label{dimreln}
\dim_{\mathbb{A}^n_{\mathbb{R}}} W = n - \textnormal{height } I(W)  < n - \textnormal{height } I(X)  =  \dim_{\mathbb{A}^n_{\mathbb{R}} } X.
\end{eqnarray}
Let $I(W) = (P_1, \ldots, P_k)$. For $W$ irreducible we say $\mathbf{z} \in W$ is a non-singular point of $W$ if
the rank of the matrix $[\frac{\partial P_i}{\partial x_j} (\mathbf{z})]$ is equal to $n - \dim_{\mathbb{A}^n_{\mathbb{R}} } (W)$.
Also affine varieties in $\mathbb{A}^n_{\mathbb{R}}$ are closed subsets of $\mathbb{R}^n$ with respect to the
Euclidean topology.

\begin{rem}
\label{remA1}
Here we defined non-singular points of an irreducible $W$ in terms of $I(W)$.
Let $F \in \mathbb{R}[x_1, \ldots, x_n]$ be a non-constant homogenous form irreducible over $\mathbb{R}$
(In this case, the Krull dimension of $\mathbb{R}[x_1, \ldots, x_n] / (F)$ is $(n-1)$.),
and suppose there exists $\mathbf{z} \in \mathbb{R}^n$ and $1 \leq j_0 \leq n$ such that  $\partial F/ \partial x_{j_0} (\mathbf{z}) \not = 0$.
Then it can be verified that $(F) = I (V(F; \mathbb{R}))$ (see \cite[Definition 3.3.1 and Proposition 3.3.16]{BCR}). Therefore, under these assumptions on $F$ the definition of non-singularity for points in $V(F;\mathbb{R})$ given in Section \ref{secintro} agrees with that defined above.
\end{rem}

\begin{lem}
\label{A1}
Suppose $\emptyset \not = V \subseteq \mathbb{A}_{\mathbb{R}}^n$ is irreducible. Let $U = (a_1, b_1) \times \cdots \times (a_n, b_n)$ be an open (with respect to the Euclidean topology) set where every point of $V \cap U$ is non-singular. Then $\overline{(V \cap U)}^{Zar} = V$, where $\overline{(V \cap U)}^{Zar}$ is the Zariski closure of $(V \cap U)$ in $\mathbb{A}_{\mathbb{R}}^n$.
\end{lem}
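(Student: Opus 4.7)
The inclusion $\overline{(V \cap U)}^{Zar} \subseteq V$ is immediate from $V \cap U \subseteq V$ and the Zariski-closedness of $V$, so the work is to establish $V \subseteq \overline{(V \cap U)}^{Zar}$. I shall assume $V \cap U \neq \emptyset$ (otherwise the hypothesis on non-singular points is vacuous and the conclusion cannot hold, so this is implicit in the intended statement), pick a non-singular point $p \in V \cap U$, set $d := \dim_{\mathbb{A}^n_{\mathbb{R}}} V$, and suppose for contradiction that $W := \overline{(V \cap U)}^{Zar}$ is a proper subvariety of $V$. Then $I(V) \subsetneq I(W)$, as recalled just before (\ref{dimreln}), so I can choose $f \in I(W) \setminus I(V)$. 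The rest of the proof aims to derive a contradiction from the fact that $f$ must vanish on a smooth $d$-dimensional piece of $V$ through $p$.

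To obtain that piece I apply the real analytic implicit function theorem. Writing $I(V) = (P_1, \ldots, P_k)$, the non-singularity of $p$ means the Jacobian $[\partial P_i/\partial x_j(p)]$ has rank $n-d$. After relabeling the $P_i$'s and the coordinates, the $(n-d) \times (n-d)$ submatrix with rows $i=1,\ldots,n-d$ and columns $j=d+1,\ldots,n$ is invertible; the implicit function theorem yields an open ball $B \subseteq \mathbb{R}^d$ centred at $(p_1, \ldots, p_d)$, a Euclidean neighbourhood $N \subseteq U$ of $p$, and real analytic functions $\phi_{d+1}, \ldots, \phi_n$ on $B$ such that $\{(t, \phi_{d+1}(t),\ldots,\phi_n(t)) : t \in B\} = \{\mathbf{x} \in N : P_1(\mathbf{x})=\cdots=P_{n-d}(\mathbf{x})=0\}$. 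Since the full Jacobian at $p$ has the same rank $n-d$ as the Jacobian of the first $n-d$ equations, each remaining $P_{n-d+1},\ldots,P_k$ lies in the ideal generated by $P_1,\ldots,P_{n-d}$ in the local ring of analytic germs at $p$, so (after shrinking $N$) they vanish on the graph as well; equivalently, the regular locus of an irreducible real algebraic variety is locally a real analytic submanifold of dimension $d$ (see \cite{BCR}). Hence $V \cap N$ coincides with the graph and is a real analytic submanifold of $\mathbb{R}^n$ of dimension $d$.

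Now set $Z := V \cap V(f;\mathbb{R})$. Because $V$ is irreducible and $f \notin I(V)$, no irreducible component of $Z$ equals $V$, so each such component is properly contained in $V$ and, by (\ref{dimreln}), has Krull dimension strictly less than $d$; hence $\dim_{\mathbb{A}^n_{\mathbb{R}}} Z < d$. On the other hand $V \cap N \subseteq V \cap U \subseteq W \subseteq V(f;\mathbb{R})$, so $V \cap N \subseteq Z$. But $V \cap N$ is a smooth submanifold of $\mathbb{R}^n$ of dimension $d$, and by the equality of Krull and geometric (semi-algebraic) dimensions for real algebraic sets (see \cite{BCR}), an algebraic set of Krull dimension less than $d$ contains no smooth submanifold of dimension $d$. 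This contradicts $V\cap N \subseteq Z$, so $W = V$, as required.

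The principal technical obstacle is the very last step, which rests on the non-trivial coincidence of Krull and geometric dimensions for real algebraic sets from the theory of semi-algebraic sets developed in \cite{BCR}. A self-contained alternative would be to exploit the analytic parametrization $\psi(t) = (t,\phi_{d+1}(t),\ldots,\phi_n(t))$ directly: pullback $g \mapsto g \circ \psi$ defines an $\mathbb{R}$-algebra homomorphism from the domain $\mathbb{R}[x_1,\ldots,x_n]/I(V)$ (of transcendence degree $d$) into the real analytic functions on $B$; using that $t_1,\ldots,t_d$ are algebraically independent as functions on $B$ and that each $x_j$ for $j > d$ is algebraic over $\mathbb{R}(x_1,\ldots,x_d)$ modulo $I(V)$, one shows this map is injective by examining a minimal polynomial, which forces $f \in I(V)$ and contradicts the choice of $f$.
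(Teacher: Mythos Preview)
Your proof is correct and follows a somewhat different route from the paper's. Both arguments set $W=\overline{(V\cap U)}^{Zar}$, assume $W\subsetneq V$, and ultimately derive a contradiction from a dimension mismatch near a smooth point using the implicit function theorem, but they organize the contradiction differently. The paper works directly with $W$: it splits into two cases according to whether $W$ has a non-singular point of top dimension inside $U$; if so, near that point both $V$ and an irreducible component of $W$ are manifolds of dimensions $\dim V$ and $\dim W<\dim V$ that coincide, contradicting invariance of domain; if not, $V\cap U=W\cap U$ lies in $\mathrm{Sing}(W)\subsetneq W$, and taking Zariski closures gives $W\subseteq\mathrm{Sing}(W)$, a contradiction. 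You instead pick a single polynomial $f\in I(W)\setminus I(V)$ and argue that the $d$-dimensional analytic manifold $V\cap N$ is contained in $Z=V\cap V(f;\mathbb{R})$, an algebraic set of Krull dimension $<d$, which is impossible by the equality of Krull and semi-algebraic dimensions from \cite{BCR}. Your approach avoids the case split and is arguably cleaner, at the cost of invoking the Krull/semi-algebraic dimension comparison; the paper's argument trades that for the singular-locus stratification and invariance of domain. Your observation that $V\cap U\neq\emptyset$ must be assumed is accurate and is implicit in the paper's use of the lemma. The alternative transcendence-degree argument you sketch at the end is also sound.
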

\begin{proof}
Let us denote
$$
X = \overline{(V \cap U)}^{Zar} \subseteq V.
$$
If $X = V$ then we are done, so let us suppose $X \subsetneq V$. Clearly this implies
$(X \cap U) \subseteq (V \cap U)$. From the definition of $X$ it follows easily that
$(V \cap U) \subseteq (X \cap U)$. Therefore, we have $(V \cap U) = (X \cap U)$.
Let $\dim_{\mathbb{A}^n_{\mathbb{R}} } V = k$. Then it follows from (\ref{dimreln}) that $m = \dim_{\mathbb{A}^n_{\mathbb{R}} } X < k$.

Suppose there exists a non-singular point $\mathbf{z} \in X$ of dimension $m$
in the sense of \cite[Definition 3.3.9]{BCR} (when $X$ is irreducible this is equivalent to the non-singularity defined above for points on irreducible varieties) such that $\mathbf{z} \in (X \cap U)$. By \cite[Proposition 3.3.10]{BCR} there exists an irreducible component $W_0$ of $X$ such that it is the unique irreducible component of $X$
containing $\mathbf{z}$ and it is a non-singular (as defined above for irreducible affine varieties) point of $W_0$. Thus by taking a sufficiently small open box $U_1$ such that $\mathbf{z} \in U_1 \subseteq U$, we have
$$
(V \cap U_1) = (W_0 \cap U_1).
$$
In particular, $\mathbf{z}$ is also a non-singular point of $V$ because of our hypothesis on $(V \cap U)$.
It then follows from \cite[Proposition 3.3.10]{BCR} and the implicit function theorem that for sufficiently small open box $U_2$ such that $\mathbf{z} \in U_2 \subseteq U_1$,
we have $(V \cap U_2)$ is a $k$-dimensional manifold while  $(W_0 \cap U_2)$ is an $m$-dimensional manifold.
However, since $(V \cap U_2) = (W_0 \cap U_2)$ and $k \not = m$, this is a contradiction by the invariance of domain theorem.

Now suppose there is no non-singular point of $X$ of dimension $m$ contained in $X \cap U$.
Let $\textnormal{Sing}(X) \subseteq \mathbb{A}^n_{\mathbb{R}}$ denote the set of points in $X$ that are not non-singular points of $X$ of dimension $m$;
it follows from \cite[Proposition 3.3.14]{BCR} that $\textnormal{Sing}(X)$ is an affine variety
satisfying $\textnormal{Sing}(X) \subsetneq X$.
In this case $(X \cap U) \subseteq \textnormal{Sing}(X)  \subsetneq X$, and by taking the Zariski closure we obtain (from the first paragraph of this proof)
$$
X = \overline{(V \cap U)}^{Zar}  = \overline{(X \cap U)}^{Zar} \subseteq \textnormal{Sing}(X)  \subsetneq X;
$$
this is a contradiction. Therefore, we can not have $X \subsetneq V$, i.e. we have obtained $X = V$.
\end{proof}

\begin{proof}[Proof of Proposition \ref{PropApp}]
First from Remark \ref{remA1} we have $(F) = I(V(F;\mathbb{R}))$, and since $(F) \subseteq \mathbb{R}[x_1, \ldots, x_n]$ is prime it follows that $V(F;\mathbb{R})$ is irreducible.
Suppose every non-singular point of $V(F; \mathbb{R})$ in $(0,1)^n$ is contained in
$V(H; \mathbb{R})$. Let $\mathbf{z}$ be a non-singular point of $V(F; \mathbb{R})$ in $(0,1)^n$ which we know exists.
Let $U$ be an open box with sufficiently small side lengths satisfying $\mathbf{z} \in U \subseteq (0,1)^n$ and every point in $V(F;\mathbb{R}) \cap U$ is a non-singular point of $V(F;\mathbb{R})$. It follows that $V(F;\mathbb{R}) \cap U \subseteq V(H;\mathbb{R})$.
Then from Lemma \ref{A1} and the definition of the Zariski closure, we have
\begin{eqnarray}
\label{dimreln2}
V(F;\mathbb{R})  =  \overline{U \cap V(F;\mathbb{R})}^{Zar} \subseteq V(F;\mathbb{R}) \cap V(H;\mathbb{R}).
\end{eqnarray}
This implies $V(F;\mathbb{R}) \subseteq V(H;\mathbb{R})$. Then we have
\begin{eqnarray}
\notag
(F) = I (V(F;\mathbb{R})) \supseteq  I (V(H;\mathbb{R})) \supseteq (H).
\end{eqnarray}
However, this is not possible as $F \nmid H$; we have obtained a contradiction. Therefore, it is not possible that every non-singular point
of $V(F; \mathbb{R})$ in $(0,1)^n$ is contained in $V(H; \mathbb{R})$.

Let $\mathbf{z}$ be a non-singular point of $V(F; \mathbb{R})$ in $(0,1)^n$ which is not contained in $V(H; \mathbb{R})$.
Then there exists an open set $U \subseteq (0,1)^n$ containing $\mathbf{z}$ such that
$U \cap V(H; \mathbb{R}) = \emptyset$, because $ V(H; \mathbb{R})$ is closed
with respect to the Euclidean topology. 
\end{proof}

\section{Explicit inverse function theorem}
\label{AppB}
In this appendix we establish Theorem \ref{thm exp inv} by following a proof of the inverse function theorem; for this we used the proof of \cite[Theorem 2-11]{Sp} as the main reference.

We let $\| \cdot \|$ be the $L^2$-norm on $\mathbb{R}^n$. We begin by stating the following basic lemma from linear algebra.
We leave the details to the reader.
\begin{lem}
\label{lin alg lemma}
Let $B$ be an $n \times n$ matrix with real entries, and
let $b_{\max}$ be the maximum of the absolute values of the entries of $B$.
Then
$$
\|B \mathbf{x} \| \leq b_{\max} n \| \mathbf{x}  \|.
$$
\end{lem}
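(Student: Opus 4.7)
The plan is to estimate $\|B\mathbf{x}\|$ componentwise and then combine the estimates via the Euclidean definition of the norm. Write $\mathbf{x} = (x_1,\ldots,x_n)$ and denote the $i$-th entry of $B\mathbf{x}$ by $(B\mathbf{x})_i = \sum_{j=1}^n B_{ij} x_j$. Since every entry of $B$ satisfies $|B_{ij}| \leq b_{\max}$, the triangle inequality gives $|(B\mathbf{x})_i| \leq b_{\max} \sum_{j=1}^n |x_j|$.

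Next I would convert the $\ell^1$ sum on the right to an $\ell^2$ quantity by a single application of the Cauchy--Schwarz inequality: $\sum_{j=1}^n |x_j| \leq \sqrt{n} \bigl( \sum_{j=1}^n x_j^2 \bigr)^{1/2} = \sqrt{n}\, \|\mathbf{x}\|$. Combining the two bounds yields $|(B\mathbf{x})_i| \leq b_{\max} \sqrt{n}\, \|\mathbf{x}\|$ for every $1 \leq i \leq n$.

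Finally I would square and sum over $i$:
$$
\|B\mathbf{x}\|^2 = \sum_{i=1}^n |(B\mathbf{x})_i|^2 \leq \sum_{i=1}^n b_{\max}^2\, n\, \|\mathbf{x}\|^2 = b_{\max}^2\, n^2\, \|\mathbf{x}\|^2,
$$
and taking the square root of both sides gives the claimed inequality $\|B\mathbf{x}\| \leq b_{\max}\, n\, \|\mathbf{x}\|$. There is no real obstacle here; the only minor decision is whether to apply Cauchy--Schwarz once (as above) or to invoke it directly on the sum $\sum_j B_{ij} x_j$ to obtain $|(B\mathbf{x})_i| \leq b_{\max}\sqrt{n}\,\|\mathbf{x}\|$ in one step. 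Either route produces the same constant $b_{\max} n$.
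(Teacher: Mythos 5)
Your proof is correct and is the natural argument; the paper explicitly states that the details of this lemma are left to the reader, so there is no competing proof in the paper to compare against. Your componentwise bound followed by Cauchy--Schwarz is exactly the expected route and produces the stated constant $b_{\max} n$.
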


Let $\mathfrak{F} = (\mathfrak{F}_1, \ldots, \mathfrak{F}_n): \mathbb{R}^n \rightarrow \mathbb{R}^n$ and suppose $A = \textnormal{Jac} \mathfrak{F} (\mathbf{x}_0)$ is invertible.
Let $a_{\max}$ be the maximum of the absolute values of the entries of $A$.
Let $W \subseteq \mathbb{R}^n$ be a bounded convex open set such that

i) $\mathbf{x}_0 \in W$,

ii) $\textnormal{det}\left( \textnormal{Jac}\mathfrak{F}(\mathbf{x}) \right) \not = 0$ $(\mathbf{x} \in W)$, and

iii)
$$
\Big{|} \frac{\partial \mathfrak{F}_i}{ \partial x_j} (\mathbf{x}) - \frac{\partial \mathfrak{F}_i}{ \partial x_j} (\mathbf{x}_0) \Big{|} < M \ \ \ (\mathbf{x} \in W, \ 1 \leq i, j \leq n),
$$
where $0 < M < |\det A| / (n \cdot n! \cdot a_{\max}^{n-1})$.

\begin{claim}
\label{claimB2}
Given any $\mathbf{x}_1, \mathbf{x}_2 \in W$, we have
\begin{eqnarray}
\label{ineq app1}
\| \mathbf{x}_1 - \mathbf{x}_2 \| < \frac{ n!  a_{\max}^{n-1}}{ |\det A|  - n M n! a_{\max}^{n-1} } \| \mathfrak{F}(\mathbf{x}_1) - \mathfrak{F}(\mathbf{x}_2) \|.
\end{eqnarray}
\end{claim}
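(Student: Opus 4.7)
The strategy is a standard bilipschitz estimate: view $\mathfrak{F}$ as a perturbation of its linearization $A$ at $\mathbf{x}_0$, control the perturbation via hypothesis iii), and invert using an explicit bound on $\|A^{-1}\|$ coming from Cramer's rule.

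First I would bound the operator norm of $A^{-1}$ in a purely elementary way. By Cramer's rule $A^{-1} = (\det A)^{-1} \, \textnormal{adj}(A)$, and each entry of $\textnormal{adj}(A)$ is, up to sign, an $(n-1)\times(n-1)$ minor of $A$; expanding the determinant gives at most $(n-1)!$ terms, each a product of $(n-1)$ entries bounded in absolute value by $a_{\max}$. Hence every entry of $A^{-1}$ has absolute value at most $(n-1)! \, a_{\max}^{n-1} / |\det A|$, and Lemma \ref{lin alg lemma} gives
$$
\|A^{-1} \mathbf{v}\| \;\leq\; \frac{n!\, a_{\max}^{n-1}}{|\det A|}\, \|\mathbf{v}\| \qquad (\mathbf{v} \in \mathbb{R}^n).
$$

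Next, since $W$ is convex, the segment from $\mathbf{x}_2$ to $\mathbf{x}_1$ lies in $W$, and the one-variable mean value theorem applied to $t \mapsto \mathfrak{F}_i(\mathbf{x}_2 + t(\mathbf{x}_1 - \mathbf{x}_2))$ produces, for each $i$, a point $\mathbf{c}_i \in W$ with
$$
\mathfrak{F}_i(\mathbf{x}_1) - \mathfrak{F}_i(\mathbf{x}_2) \;=\; \sum_{j=1}^{n} \frac{\partial \mathfrak{F}_i}{\partial x_j}(\mathbf{c}_i)\,(x_{1,j} - x_{2,j}).
$$
Let $B$ be the matrix with entries $B_{i,j} = \partial \mathfrak{F}_i / \partial x_j(\mathbf{c}_i)$, so that $\mathfrak{F}(\mathbf{x}_1) - \mathfrak{F}(\mathbf{x}_2) = B(\mathbf{x}_1 - \mathbf{x}_2)$. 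By hypothesis iii), every entry of $B - A$ satisfies $|B_{i,j} - A_{i,j}| < M$, hence by Lemma \ref{lin alg lemma}
$$
\|(B - A)(\mathbf{x}_1 - \mathbf{x}_2)\| \;<\; nM\, \|\mathbf{x}_1 - \mathbf{x}_2\|.
$$

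Combining these two ingredients, the triangle inequality gives
$$
\|A(\mathbf{x}_1 - \mathbf{x}_2)\| \;\leq\; \|\mathfrak{F}(\mathbf{x}_1) - \mathfrak{F}(\mathbf{x}_2)\| + nM\, \|\mathbf{x}_1 - \mathbf{x}_2\|,
$$
and applying the bound on $\|A^{-1}\|$ to $\mathbf{x}_1 - \mathbf{x}_2 = A^{-1} A (\mathbf{x}_1 - \mathbf{x}_2)$ yields
$$
\|\mathbf{x}_1 - \mathbf{x}_2\| \;\leq\; \frac{n!\, a_{\max}^{n-1}}{|\det A|}\Bigl(\|\mathfrak{F}(\mathbf{x}_1) - \mathfrak{F}(\mathbf{x}_2)\| + nM\, \|\mathbf{x}_1 - \mathbf{x}_2\|\Bigr).
$$
The hypothesis $M < |\det A|/(n \cdot n!\, a_{\max}^{n-1})$ guarantees the factor $|\det A| - nMn!\,a_{\max}^{n-1}$ is strictly positive, so rearranging yields the claimed inequality (\ref{ineq app1}).

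No step presents a serious obstacle; the only subtlety is the Cramer's-rule bound on the entries of $A^{-1}$, since writing it in terms of $a_{\max}$ rather than an operator norm is what ultimately puts the constant in the stated form. The strict inequality in (\ref{ineq app1}) comes cleanly from the strict inequality in iii) whenever $\mathbf{x}_1 \neq \mathbf{x}_2$, and holds trivially otherwise.
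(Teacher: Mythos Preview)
Your argument is correct and essentially identical to the paper's: both bound the entries of $A^{-1}$ via Cramer's rule and Lemma~\ref{lin alg lemma}, apply the mean value theorem on the convex set $W$ together with hypothesis iii) to control $\mathfrak{F}(\mathbf{x}_1)-\mathfrak{F}(\mathbf{x}_2)-A(\mathbf{x}_1-\mathbf{x}_2)$ by $nM\|\mathbf{x}_1-\mathbf{x}_2\|$, and then rearrange. Your explicit introduction of the matrix $B$ (with row-dependent mean-value points) and your remark on how the strict inequality arises are slightly more careful than the paper's presentation, but the content is the same.
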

\begin{proof}
By iii) for any $\mathbf{x} \in W$ we have
$$
\Big{|}  \frac{\partial}{\partial x_j}  [\mathfrak{F}(\mathbf{x}) - A \mathbf{x} ]_i   \Big{|}
=
\Big{|}  \frac{\partial \mathfrak{F}_i}{\partial x_j} (\mathbf{x}) - \frac{ \partial \mathfrak{F}_i}{\partial x_j}(\mathbf{x}_0)   \Big{|} < M,
$$
where $[\mathfrak{F}(\mathbf{x}) - A \mathbf{x} ]_i$ denotes the $i$-th coordinate of $(\mathfrak{F}(\mathbf{x}) - A \mathbf{x})$.
Then by the triangle inequality and the mean value theorem (and the fact that $W$ is convex), we have
\begin{eqnarray}
\| A \mathbf{x}_1 - A \mathbf{x}_2 \| - \|\mathfrak{F}(\mathbf{x}_1) - \mathfrak{F}(\mathbf{x}_2) \|
\notag
&\leq& \|  (\mathfrak{F}(\mathbf{x}_1) - \mathfrak{F}(\mathbf{x}_2)) - (A \mathbf{x}_1 - A \mathbf{x}_2)  \|
\\
\notag
&=& \| (\mathfrak{F}(\mathbf{x}_1) - A \mathbf{x}_1  ) - (\mathfrak{F}(\mathbf{x}_2) - A \mathbf{x}_2  ) \|
\\
\notag
&\leq&
n M \| \mathbf{x}_1 - \mathbf{x}_2 \|.
\end{eqnarray}
It follows from Lemma \ref{lin alg lemma} that
$$
\| \mathbf{x}_1 - \mathbf{x}_2 \|  =  \| A^{-1} (A \mathbf{x}_1) - A^{-1} (A \mathbf{x}_2) \|
\leq   \frac{n! a_{\max}^{n-1}}{|\det A|}    \| A \mathbf{x}_1 - A \mathbf{x}_2 \|.
$$
Here we used $(n-1)! a_{\max}^{n-1}$ as an upper bound for the maximum value of the absolute values of the entries of $\textnormal{adj}A$, the adjugate of $A$. Therefore, we obtain
\begin{eqnarray}
\frac{|\det A| }{ n! a_{\max}^{n-1} } \| \mathbf{x}_1 - \mathbf{x}_2 \| - \|\mathfrak{F}(\mathbf{x}_1) - \mathfrak{F}(\mathbf{x}_2) \|
\leq
n M \| \mathbf{x}_1 - \mathbf{x}_2 \|
\end{eqnarray}
from which the statement follows immediately.
\end{proof}

Let $\partial W = \overline{W} \backslash W$ where $\overline{W}$ is the closure of $W$.
In particular, $\partial W$ is compact.
Let
$$
m = \min_{\mathbf{x} \in \partial W} \| \mathfrak{F}(\mathbf{x}) - \mathfrak{F}(\mathbf{x}_0) \|,
$$
which exists because the function $\| \mathfrak{F}(\mathbf{x}) - \mathfrak{F}(\mathbf{x}_0) \|$ is continuous.

We define
$$
V = \{ \mathbf{y} \in \mathbb{R}^n : \| \mathbf{y} - \mathfrak{F}(\mathbf{x}_0) \| < m/2  \}.
$$
Since $ \| \mathbf{y} - \mathfrak{F}(\mathbf{x}_0) \|$ is continuous, $V$ is open.
\begin{lem}
\label{lemma 1 app}
Given $\mathbf{y} \in V$ there exists a unique $\mathbf{x} \in W$ such that $\mathfrak{F}(\mathbf{x}) = \mathbf{y}$.
\end{lem}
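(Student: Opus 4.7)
The plan is to mimic the standard proof of the inverse function theorem, constructing the preimage as a minimizer of the distance function and then reading off uniqueness from Claim \ref{claimB2}. Fix $\mathbf{y} \in V$ and define the continuous function
$$
g(\mathbf{x}) = \| \mathfrak{F}(\mathbf{x}) - \mathbf{y} \|^2 \quad (\mathbf{x} \in \overline{W}).
$$
Since $\overline{W}$ is compact (being the closure of the bounded set $W$), $g$ attains a minimum at some $\mathbf{x}^* \in \overline{W}$.

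First I would show $\mathbf{x}^* \notin \partial W$. For any $\mathbf{x} \in \partial W$, the triangle inequality together with the definition of $m$ and $\mathbf{y} \in V$ gives
$$
\| \mathfrak{F}(\mathbf{x}) - \mathbf{y} \| \geq \| \mathfrak{F}(\mathbf{x}) - \mathfrak{F}(\mathbf{x}_0) \| - \| \mathfrak{F}(\mathbf{x}_0) - \mathbf{y} \| \geq m - m/2 = m/2,
$$
whereas $g(\mathbf{x}_0) = \| \mathfrak{F}(\mathbf{x}_0) - \mathbf{y} \|^2 < (m/2)^2$ since $\mathbf{y} \in V$ and $\mathbf{x}_0 \in W$. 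Hence the boundary values of $g$ strictly exceed $g(\mathbf{x}_0)$, forcing the minimizer $\mathbf{x}^*$ to lie in the open set $W$.

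Next, at the interior minimizer all partial derivatives vanish:
$$
0 = \frac{\partial g}{\partial x_j}(\mathbf{x}^*) = 2 \sum_{i=1}^n \left( \mathfrak{F}_i(\mathbf{x}^*) - y_i \right) \frac{\partial \mathfrak{F}_i}{\partial x_j}(\mathbf{x}^*) \quad (1 \leq j \leq n).
$$
This is the linear system $\left( \textnormal{Jac}\mathfrak{F}(\mathbf{x}^*) \right)^T \left( \mathfrak{F}(\mathbf{x}^*) - \mathbf{y} \right) = \mathbf{0}$. By property ii) we have $\det \left( \textnormal{Jac}\mathfrak{F}(\mathbf{x}^*) \right) \neq 0$, so the transpose is invertible and consequently $\mathfrak{F}(\mathbf{x}^*) = \mathbf{y}$; existence is established.

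Uniqueness is immediate from Claim \ref{claimB2}: if $\mathbf{x}_1, \mathbf{x}_2 \in W$ both satisfy $\mathfrak{F}(\mathbf{x}_i) = \mathbf{y}$, then the right-hand side of (\ref{ineq app1}) vanishes (note the coefficient is finite and positive because the hypothesis $M < |\det A|/(n \cdot n! \cdot a_{\max}^{n-1})$ ensures the denominator is strictly positive), forcing $\| \mathbf{x}_1 - \mathbf{x}_2 \| = 0$. The only step that requires any care is ruling out boundary minimizers, but the strict inequality $\mathbf{y} \in V$ (open ball of radius $m/2$) was built into the definition of $V$ precisely for this purpose, so the argument goes through cleanly.
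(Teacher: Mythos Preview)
Your proof is correct and follows essentially the same approach as the paper: minimize the squared distance $\|\mathfrak{F}(\mathbf{x})-\mathbf{y}\|^2$ over $\overline{W}$, rule out boundary minimizers via the triangle inequality and the definition of $m$, use the invertibility of $\textnormal{Jac}\mathfrak{F}$ at the interior critical point to conclude $\mathfrak{F}(\mathbf{x}^*)=\mathbf{y}$, and read off uniqueness from Claim~\ref{claimB2}. The only cosmetic slip is that your boundary estimate $\|\mathfrak{F}(\mathbf{x})-\mathbf{y}\|\geq m-m/2$ is actually strict (since $\|\mathfrak{F}(\mathbf{x}_0)-\mathbf{y}\|<m/2$), but your comparison with the strict inequality $g(\mathbf{x}_0)<(m/2)^2$ makes the argument go through regardless.
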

\begin{proof}
Let us fix $\mathbf{y} \in V$. Consider $h : W \rightarrow \mathbb{R}$ defined by
$$
h(\mathbf{x}) = \| \mathbf{y} - \mathfrak{F}(\mathbf{x}) \|^2 = \sum_{i=1}^n
(y_i - \mathfrak{F}_i(\mathbf{x}))^2.
$$
Since $h$ is continuous it attains a minimum value on $\overline{W}$.
Let $\mathbf{z} \in \overline{W}$ be a point at which the minimum is attained. In fact it must be that
$\mathbf{z} \in W$, because for any $\mathbf{x} \in \partial W$ we have
$$
\sqrt{h(\mathbf{x}_0)} = \| \mathbf{y} - \mathfrak{F}(\mathbf{x}_0) \| < m/2 < \| \mathfrak{F}(\mathbf{x}) - \mathfrak{F}(\mathbf{x}_0) \| -\| \mathbf{y} - \mathfrak{F}(\mathbf{x}_0) \|   \leq \| \mathbf{y} - \mathfrak{F}(\mathbf{x}) \| = \sqrt{h(\mathbf{x})}.
$$
Then since $h$ is differentiable we have $\nabla h (\mathbf{z}) = \mathbf{0}$, and this is equivalent to
$$
0 = \frac{\partial h}{\partial x_j} (\mathbf{z}) = \sum_{i=1}^n 2  (y_i - \mathfrak{F}_i (\mathbf{z}))
\ \frac{\partial \mathfrak{F}_i}{ \partial x_j} (\mathbf{z}) \ \ \  (1 \leq j \leq n).
$$
Therefore, we have
$$
\mathbf{0} = \textnormal{Jac} \mathfrak{F} (\mathbf{z})^T \cdot ( \mathbf{y} - \mathfrak{F}(\mathbf{z})),
$$
where $\textnormal{Jac} \mathfrak{F} (\mathbf{z})^T$ is the transpose of $\textnormal{Jac} \mathfrak{F} (\mathbf{z})$,
and by ii) it follows that
$$
\mathbf{y} - \mathfrak{F}(\mathbf{z}) = \mathbf{0}.
$$
We now prove this point is unique. Suppose $\mathbf{y} = \mathfrak{F}(\mathbf{x}_1) = \mathfrak{F}(\mathbf{x}_2)$ for $\mathbf{x}_1, \mathbf{x}_2 \in W$.
Then it follows from (\ref{ineq app1}) that
$$
\| \mathbf{x}_1 - \mathbf{x}_2 \| \leq \frac{ n!  a_{\max}^{n-1}}{ |\det A|  - n M n! a_{\max}^{n-1} } \| \mathfrak{F}(\mathbf{x}_1) - \mathfrak{F}(\mathbf{x}_2) \| = 0.
$$
\end{proof}

By Lemma \ref{lemma 1 app} we see that the inverse of $\mathfrak{F}$ is well-defined on $V$.
Let $U = \mathfrak{F}^{-1}(V) \subseteq W$. Since $\mathfrak{F}$ is continuous and $V$ is open, it follows that $U$ is open.
\begin{lem}
\label{lemma 2 app}
$\mathfrak{F}^{-1}|_{V}$ is continuous and differentiable.
\end{lem}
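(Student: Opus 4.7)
The plan is to deduce continuity of $\mathfrak{F}^{-1}$ immediately from Claim \ref{claimB2} and then to establish differentiability by the standard chord-secant argument familiar from the usual proof of the inverse function theorem, keeping track of the fact that all the estimates we need are already in hand.

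First, for continuity, given $\mathbf{y}_1, \mathbf{y}_2 \in V$ and $\mathbf{x}_j = \mathfrak{F}^{-1}(\mathbf{y}_j) \in U \subseteq W$, Claim \ref{claimB2} gives
$$
\| \mathfrak{F}^{-1}(\mathbf{y}_1) - \mathfrak{F}^{-1}(\mathbf{y}_2) \| = \| \mathbf{x}_1 - \mathbf{x}_2 \| < K \| \mathbf{y}_1 - \mathbf{y}_2 \|,
$$
where $K = n! a_{\max}^{n-1} / ( |\det A| - n M n! a_{\max}^{n-1} ) > 0$. Thus $\mathfrak{F}^{-1}$ is in fact Lipschitz on $V$, hence continuous.

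Next, for differentiability, I would fix $\mathbf{y} \in V$, set $\mathbf{x} = \mathfrak{F}^{-1}(\mathbf{y}) \in U$, and let $L = \textnormal{Jac} \mathfrak{F}(\mathbf{x})$, which is invertible by hypothesis \textnormal{ii)}. The candidate derivative is $L^{-1}$. For $\mathbf{k}$ small enough that $\mathbf{y} + \mathbf{k} \in V$, write $\mathbf{h} = \mathfrak{F}^{-1}(\mathbf{y} + \mathbf{k}) - \mathbf{x}$, so that $\mathbf{k} = \mathfrak{F}(\mathbf{x} + \mathbf{h}) - \mathfrak{F}(\mathbf{x})$. Differentiability of $\mathfrak{F}$ at $\mathbf{x}$ gives
$$
\mathbf{k} = L \mathbf{h} + \mathbf{r}(\mathbf{h}), \qquad \frac{\|\mathbf{r}(\mathbf{h})\|}{\|\mathbf{h}\|} \to 0 \ \ (\mathbf{h} \to \mathbf{0}),
$$
and applying $L^{-1}$ yields $\mathbf{h} - L^{-1} \mathbf{k} = - L^{-1} \mathbf{r}(\mathbf{h})$.

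The remaining point is to convert an $o(\|\mathbf{h}\|)$ statement into an $o(\|\mathbf{k}\|)$ statement, and here Claim \ref{claimB2} is exactly what is needed: it gives $\|\mathbf{h}\| \leq K \|\mathbf{k}\|$, so that $\mathbf{k} \to \mathbf{0}$ forces $\mathbf{h} \to \mathbf{0}$, and
$$
\frac{\| \mathfrak{F}^{-1}(\mathbf{y} + \mathbf{k}) - \mathfrak{F}^{-1}(\mathbf{y}) - L^{-1} \mathbf{k} \|}{\| \mathbf{k} \|}
= \frac{\| L^{-1} \mathbf{r}(\mathbf{h}) \|}{\| \mathbf{k} \|}
\leq K \cdot \|L^{-1}\|_{\text{op}} \cdot \frac{\| \mathbf{r}(\mathbf{h}) \|}{\| \mathbf{h} \|} \longrightarrow 0
$$
as $\mathbf{k} \to \mathbf{0}$, where $\|L^{-1}\|_{\text{op}}$ denotes any fixed operator norm of $L^{-1}$ (controllable, for instance, via Lemma \ref{lin alg lemma}). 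Hence $\mathfrak{F}^{-1}$ is differentiable at $\mathbf{y}$ with $\textnormal{Jac} \mathfrak{F}^{-1}(\mathbf{y}) = [ \textnormal{Jac} \mathfrak{F}(\mathbf{x})]^{-1}$.

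There is no real obstacle here beyond bookkeeping: the only nontrivial input is the bi-Lipschitz bound from Claim \ref{claimB2}, and everything else is the standard argument. One mild subtlety worth flagging is the tacit use of the fact that for $\mathbf{k}$ sufficiently small the point $\mathbf{x} + \mathbf{h}$ lies in $W$ so that $\textnormal{Jac} \mathfrak{F}(\mathbf{x} + \mathbf{h})$ is invertible — but this is guaranteed by the openness of $U$ and the continuity of $\mathfrak{F}^{-1}$ just established, so it presents no issue.
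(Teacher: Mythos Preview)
Your proposal is correct and follows essentially the same route as the paper: continuity via the Lipschitz estimate of Claim \ref{claimB2}, and differentiability by writing $\mathbf{k} = L\mathbf{h} + \mathbf{r}(\mathbf{h})$, applying $L^{-1}$, and using the same Lipschitz bound to pass from $o(\|\mathbf{h}\|)$ to $o(\|\mathbf{k}\|)$. The paper's notation differs (it calls your $L$ and $\mathbf{r}$ respectively $B$ and $\phi$), but the argument is identical, including the appeal to Lemma \ref{lin alg lemma} to control $\|L^{-1}\|_{\text{op}}$.
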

\begin{proof}
Given any $\mathbf{y}_i \in V$ there exists a unique $\mathbf{x}_i \in U$ such that $\mathfrak{F}(\mathbf{x}_i) = \mathbf{y}_i$ $(1 \leq i \leq 2)$.
By (\ref{ineq app1}) we have
\begin{eqnarray}
\| \mathfrak{F}^{-1}(\mathbf{y}_1) - \mathfrak{F}^{-1}(\mathbf{y}_2)   \|  &=& \| \mathbf{x}_1 - \mathbf{x}_2 \|
\label{B.2'}
\\
\notag
&\leq&
\frac{ n!  a_{\max}^{n-1}}{ |\det A|  - n M n! a_{\max}^{n-1} }
\| \mathfrak{F}(\mathbf{x}_1) - \mathfrak{F}(\mathbf{x}_2)   \|
\\
\notag
&=& \frac{ n!  a_{\max}^{n-1}}{ |\det A|  - n M n! a_{\max}^{n-1} } \| \mathbf{y}_1 - \mathbf{y}_2 \|.
\end{eqnarray}
Therefore, it follows that $\mathfrak{F}^{-1}|_{V}$ is continuous.

Fix $\mathbf{y}_3 \in V$ and let $\mathbf{y}_3 = \mathfrak{F}(\mathbf{x}_3)$, where $\mathbf{x}_3 \in U$.
For simplicity let us denote $B = \textnormal{Jac} \mathfrak{F} (\mathbf{x}_3)$, and define
\begin{eqnarray}
\label{eqnofphi}
\phi(\mathbf{h}) = \mathfrak{F} (\mathbf{x}_3 + \mathbf{h}) - \mathfrak{F} (\mathbf{x}_3) - B \mathbf{h}.
\end{eqnarray}
It follows from the differentiability of $\mathfrak{F}(\mathbf{x})$ at $\mathbf{x}_3$ that
\begin{eqnarray}
\label{limit phi}
\lim_{\|\mathbf{h}\| \rightarrow 0} \frac{ \|\phi(\mathbf{h}) \|  }{\|\mathbf{h} \| } = 0.
\end{eqnarray}
Let $\mathbf{h} \not = \mathbf{0}$ be such that $(\mathbf{x}_3 + \mathbf{h}) \in U$ and denote $\mathbf{y}_3' = \mathfrak{F}(\mathbf{x}_3 + \mathbf{h})$.
By rearranging the equation (\ref{eqnofphi}) we obtain
$$
- B^{-1} \phi(\mathbf{h}) = (\mathbf{x}_3 + \mathbf{h}) - \mathbf{x}_3 -
B^{-1} (\mathfrak{F}(\mathbf{x}_3 + \mathbf{h}) - \mathfrak{F}(\mathbf{x}_3) ),
$$
and this is equivalent to
$$
- B^{-1} ( \phi(\mathfrak{F}^{-1}(\mathbf{y}'_3) -  \mathfrak{F}^{-1}(\mathbf{y}_3)))
=
\mathfrak{F}^{-1}(\mathbf{y}'_3) -  \mathfrak{F}^{-1}(\mathbf{y}_3) - B^{-1}( \mathbf{y}'_3 - \mathbf{y}_3).
$$
Therefore, we have
\begin{eqnarray}
\frac{ \|\mathfrak{F}^{-1}(\mathbf{y}'_3) -  \mathfrak{F}^{-1}(\mathbf{y}_3) - B^{-1}( \mathbf{y}'_3 - \mathbf{y}_3) \|  }{\|\mathbf{y}'_3 - \mathbf{y}_3 \| }
\label{eqn 1' app}
&=&
\frac{ \|B^{-1} \  \phi(\mathfrak{F}^{-1}(\mathbf{y}'_3) -  \mathfrak{F}^{-1}(\mathbf{y}_3)) \|  }{\|\mathbf{y}'_3 - \mathbf{y}_3 \| }
\\
&\leq&
\frac{ C' \|  \phi(\mathfrak{F}^{-1}(\mathbf{y}'_3) -  \mathfrak{F}^{-1}(\mathbf{y}_3)) \|  }{\|\mathbf{y}'_3 - \mathbf{y}_3 \| }
\notag
\end{eqnarray}
for some $C' > 0$ dependent on $B^{-1}$; the last inequality was obtained by Lemma \ref{lin alg lemma}.
Using (\ref{B.2'}) we also have
\begin{eqnarray}
\frac{ \|  \phi(  \mathfrak{F}^{-1}(\mathbf{y}'_3) -  \mathfrak{F}^{-1}(\mathbf{y}_3)  ) \|  }{\|\mathbf{y}'_3 - \mathbf{y}_3 \| }
\label{B.3''}
&=&
\frac{ \| ( \phi(\mathfrak{F}^{-1}(\mathbf{y}'_3) -  \mathfrak{F}^{-1}(\mathbf{y}_3))) \|  }{ \| \mathfrak{F}^{-1}(\mathbf{y}'_3) -  \mathfrak{F}^{-1}(\mathbf{y}_3) \| } \cdot \frac{\| \mathfrak{F}^{-1}(\mathbf{y}'_3) -  \mathfrak{F}^{-1}(\mathbf{y}_3) \|}{\|\mathbf{y}'_3 - \mathbf{y}_3 \| }
\\
&\leq&
\frac{ \| ( \phi(\mathfrak{F}^{-1}(\mathbf{y}'_3) -  \mathfrak{F}^{-1}(\mathbf{y}_3))) \|  }{ \| \mathfrak{F}^{-1}(\mathbf{y}'_3) -  \mathfrak{F}^{-1}(\mathbf{y}_3) \| } \cdot \frac{ n!  a_{\max}^{n-1}}{ |\det A|  - n M n! a_{\max}^{n-1} }.
\notag
\end{eqnarray}
Then by (\ref{limit phi}) and the continuity of $\mathfrak{F}^{-1}$, we have
\begin{eqnarray}
\label{B.3'''}
\lim_{\|\mathbf{y}'_3 - \mathbf{y}_3\| \rightarrow 0}
\frac{ \| ( \phi(\mathfrak{F}^{-1}(\mathbf{y}_3') -  \mathfrak{F}^{-1}(\mathbf{y}_3))) \|  }
{ \| \mathfrak{F}^{-1}(\mathbf{y}_3') -  \mathfrak{F}^{-1}(\mathbf{y}_3) \| } = 0.
\end{eqnarray}
Thus letting $\mathbf{k} = \mathbf{y}'_3 - \mathbf{y}_3$, it follows from (\ref{eqn 1' app}), (\ref{B.3''}) and (\ref{B.3'''}) that
$$
\lim_{\|\mathbf{k}\| \rightarrow 0} \frac{ \|\mathfrak{F}^{-1}(\mathbf{y}_3 + \mathbf{k}) -  \mathfrak{F}^{-1}(\mathbf{y}_3) - B^{-1} \mathbf{k} \|  }{\|\mathbf{k} \| } =0,
$$
and hence $\mathfrak{F}^{-1}$ is differentiable at $\mathbf{y}_3$.
\end{proof}

Let $\mathcal{X}$ be an open subset of $\mathbb{R}^n$.
Let $\mathcal{C}^k(\mathcal{X})$ denote the set of functions from $\mathcal{X}$ to $\mathbb{R}^n$ which are $k$-times differentiable and all of
their $k$-th partial derivatives are continuous on $\mathcal{X}$. We now prove by induction that if $\mathfrak{F}(\mathbf{x}) \in \mathcal{C}^k(U)$, then
$\mathfrak{F}^{-1}(\mathbf{y}) \in \mathcal{C}^k(V)$ for all $k \in \mathbb{N}$; from this it follows that
if $\mathfrak{F}$ is smooth on $U$ then $\mathfrak{F}^{-1}$ is smooth on $V$.
The base case is proved in the proof of Lemma \ref{lemma 2 app}.
The proof also shows that given $\mathfrak{F}(\mathbf{x}_0) = \mathbf{y}_0 \in V$, we have
$$
\textnormal{Jac} \mathfrak{F}^{-1}(\mathbf{y}_0) = (\textnormal{Jac} \mathfrak{F}(\mathbf{x}_0))^{-1}
=
(\textnormal{Jac} \mathfrak{F} (    \mathfrak{F}^{-1}( \mathbf{y}_0)  )  )^{-1}.
$$
Suppose the statement holds for some $k \in \mathbb{N}$. If $\mathfrak{F} (\mathbf{x}) \in \mathcal{C}^{k+1}(U)$ then
the $(i,j)$-th entry of $\textnormal{Jac} \mathfrak{F} (  \mathfrak{F}^{-1}( \mathbf{y})  )$,
$$
\frac{\partial \mathfrak{F}_i}{\partial x_j}   (    \mathfrak{F}^{-1}( \mathbf{y})  ),
$$
is a composition of two $k$-times continuously differentiable functions. Since each entry of the inverse of a matrix is a smooth function (on the open set where the determinant is non-zero) in its entries, we see that each entry of $(\textnormal{Jac} \mathfrak{F} (    \mathfrak{F}^{-1}( \mathbf{y})  )  )^{-1} = \textnormal{Jac} \mathfrak{F}^{-1}(\mathbf{y})$ is a $k$-times continuously differentiable function; therefore, it follows that $\mathfrak{F}^{-1}(\mathbf{y}) \in \mathcal{C}^{k+1}(V)$.

\end{document}